\documentclass[10pt]{article}
\usepackage{latexsym,amsmath,amsthm,amssymb,float}
\usepackage[sorted]{amsrefs}
\usepackage{graphicx}
\usepackage[colorlinks=true, linkcolor=blue]{hyperref}
\usepackage{geometry}
\usepackage{enumitem}
\usepackage{tikz}
\usepackage{mathtools}
\usepackage{thm-restate}
\usepackage{cleveref}
\usepackage{verbatim}
\usepackage{ragged2e}\RaggedRight

\newcommand{\yoa}[1]{{\bf\color{orange}{[Alexander: #1}]}}

\newtheorem{theorem}{Theorem}[section]

\newtheorem{lemma}[theorem]{Lemma}

\newtheorem{corollary}[theorem]{Corollary}
\newtheorem{question}[theorem]{Question}

\newtheorem{remark}[theorem]{Remark}

\newcommand{\thistheoremname}{}
\newtheorem*{genericthm*}{\thistheoremname}
\newenvironment{namedthm*}[1]
  {\renewcommand{\thistheoremname}{#1}%
   \begin{genericthm*}}
  {\end{genericthm*}}

\usepackage[color=red,textsize=footnotesize]{todonotes}

\setlist[enumerate]{topsep=0pt,partopsep=1ex,parsep=1ex}
\usepackage{dsfont}

\newcommand{\ceil}[1]{\left \lceil #1 \right \rceil}

\title{$3$-Neighbor bootstrap percolation on two-dimensional grids}

\author{Neal Bushaw\thanks{Dept. of Math. \& Appl. Math., Virginia Commonwealth University, Richmond, USA {\tt nobushaw@vcu.edu} } \and Alexander Clifton\thanks{Department of Theoretical Computer Science, Czech Technical University in Prague, Prague, Czechia {\tt alexander.clifton@fit.cvut.cz}. The author was supported in part by the Institute for Basic Science (IBS-R029-C1) and by the grant 23-06815M of the Grant Agency of the Czech Republic.}}

\begin{document}

\maketitle
\begin{abstract}
In the $3$-neighbor bootstrap percolation process, a vertex becomes (and remains) infected if at least three of its neighbors are infected. We say that an initial configuration of infected vertices percolates if eventually all vertices are infected. We exactly determine the size of the minimum percolating set for the $3$-neighbor bootstrap percolation process on all remaining open cases for rectangular grid graphs $P_m\square P_n$. This extends earlier work of Dukes, Noel, and Romer. Additionally, we consider the same question for the toroidal grids $C_m\square C_n$, proving upper and lower bounds which are at most one apart and determining the answer precisely in many divisibility cases.
\end{abstract}
\section{Intro}
Bootstrap percolation is a graph-theoretic infection model whose origins lie in statistical physics. It is a simple model of the cellular automata first introduced by Von Neumann, spurred by a suggestion of Ulam.  We think of some vertices of a graph as being infected, and that this infection spreads over time whenever an uninfected vertex is a neighbor to many (at least $k$) infected vertices. This process has been well explored by numerous authors in diverse fields; for a survey with applications to several areas, see \cite{AdlerLev} and its many references.

To be precise in our combinatorial setting, we define the {\bf{$k$-bootstrap percolation}} process on a graph $G$ starting from an initially infected set of vertices $\mathcal{I}_0\subseteq V(G)$.  We then inductively define the set of infected vertices at each successive discrete time step, setting \[\mathcal{I}_{t+1}=\mathcal{I}_t\cup\{x\in V(G)-\mathcal{I}_t:|N(x)\cap\mathcal{I}_t|\ge k\}.\]

We are interested in the long term behavior of these initially infected sets. In particular, does the infection stop with some vertices remaining uninfected, or does it spread to eventually infect every vertex?  In the latter case, we say that $\mathcal{I}_0$ {\bf{percolates}}.  With this long term behavior in mind, we set $\left\langle\mathcal{I}_0\right\rangle=\bigcup_{t\ge 0}\mathcal{I}_t$, and say that $\mathcal{I}_0$ percolates when $\left\langle\mathcal{I}_0\right\rangle=V(G)$.

Due to bootstrap percolation's origins in statistical physics, a great deal of research has investigated the case where $\mathcal{I}_0$ is chosen randomly among the vertices of an $n$-dimensional grid -- for a quite general survey, see e.g., \cite{RM}. Here, we set our eyes on a very natural extremal question: given a graph $G$ and an integer $k$, what is the minimum possible cardinality of a percolating set under $k$-neighbor bootstrap on $G$? 

While this problem sounds innocent, determining the minimum is NP-hard, even in the case of $2$-neighbor percolation on graphs of bounded degree \cites{Centano, Ning, DreyerRoberts}. Determining the minimum size of a percolating set has been studied for many classes of graphs, including trees \cite{Centano, DreyerRoberts, Riedl}, chordal graphs \cites{Centano, Bessy, Chiang}, hexagonal grids \cite{Adams}, and others.  In this manuscript, we focus on rectangular grids and tori.

For the $m\times n$ rectangular grid $P_m\square P_n$, we let $s_k(m,n)$ denote the minimum number of initial infected cells required to percolate under the $k$-neighbor bootstrap percolation process; we use $t_k(m,n)$ to denote this minimum on the $m\times n$ toroidal grid $C_m\square C_n$.

Determining $s_2(n,n)$, the minimum size of a 2-percolating set on the square grid, is a very well known special case which has often been given as a puzzle, dating back to at least the 1980s \cite{BB,Winkler,Times}.  There is an easy perimeter argument showing that $s_2(n,n)=n$; we use this idea in a considerably more general setting in Section \ref{sec:Rectangles}.

 Our first main result is to fully determine $s_3(m,n)$ for all pairs $m,n$, where it was not previously known.  We will compare these to a version of a folklore general lower bound which depends on the number of even dimensions; for completeness, we provide a proof in Section \ref{sec:Rectangles}.  Rather than repeatedly writing out these cases, we will use the following shorthand:

\[
LB :=
\begin{cases*}
  \left\lceil \dfrac{mn + m + n}{3} \right\rceil
    & for $m,n$ odd,\\[4pt]
  \left\lceil \dfrac{mn + m + n + 2}{3} \right\rceil
    & for exactly one of $m,n$ even,\\[4pt]
  \left\lceil \dfrac{mn + m + n + 4}{3} \right\rceil
    & for both $m,n$ even.
\end{cases*}
\]

We summarize values of $s_3(m,n)$ that were previously known. Because $s_3(m,n)=s_3(n,m)$, some values are included implicitly.
\begin{theorem}\label{thm:oldcases}
    $s_3(m,n)=LB$ when $m\equiv 2\pmod{3}$ or $m\equiv n\equiv 0\pmod{6}$. Also, $s_3(1,n)=n, s_3(2,n)=n+2$ for $n\ge 2$, and $s_3(3,n)=\lceil{\frac{3n+1}{2}\rceil}$ for $n\ge 3$.
\end{theorem}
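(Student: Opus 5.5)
This theorem collects previously known values (the general cases are due to Dukes, Noel, and Romer), so the proof has two parts: the folklore lower bound $s_3(m,n)\ge LB$, and matching constructions in each listed case.

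\textbf{Lower bound.} I would use the standard perimeter-type potential. Consider the quantity
\[
\Phi=\sum_{v\in \mathcal I}(4-\deg_{\mathrm{grid}}\text{-boundary correction})
\]
or, more concretely, the number of edges of the grid joining an infected cell to an uninfected cell, plus boundary edges of infected cells counted suitably. Each time a cell becomes infected it has at least $3$ infected neighbours and at most one uninfected neighbour. Counting ``half-edges'' shows that the modified perimeter (perimeter $+$ twice the number of infected cells, or equivalently edges-within-$\mathcal I$ measured against $3|\mathcal I|$) changes in a controlled way. Set $e(\mathcal I)$ to be the number of grid edges spanned by $\mathcal I$. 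Infecting a vertex adds at least $3$ edges, so $3|\mathcal I|-e(\mathcal I)$ is nonincreasing. At the end this quantity equals $3mn-(2mn-m-n)=mn+m+n$. Initially it is at most $3|\mathcal I_0|$, giving $|\mathcal I_0|\ge (mn+m+n)/3$.

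\textbf{Parity refinement.} Each corner has degree $2$, so it must be initially infected. Moreover, in any row or column of even length a further loss must occur. I would argue that a newly infected cell with exactly $3$ infected neighbours out of $4$ either gains exactly $3$ edges or, for degree-$3$ boundary cells, exactly $3$. The slack arises from initially adjacent infected cells and from cells infected with all $4$ neighbours. Along a boundary side of even length, a parity/pairing argument on which boundary cells get infected (the boundary path behaves like $2$-neighbour percolation on a path) forces either two adjacent initially infected cells or a surplus event, costing $1$ unit per even side-pair. This gives the $+2$ and $+4$ terms. I expect making this boundary argument rigorous to be the main obstacle.

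\textbf{Constructions.} For $m\equiv 2\pmod 3$, I would build explicit periodic patterns: diagonal stripes of slope compatible with period $3$ in which each uninfected cell eventually sees $3$ infected neighbours, with hand-fixed ends depending on $n \bmod 6$. I would verify these by induction on $n$ by adding $3\times m$ or $6\times m$ blocks, checking that each added block raises the count by exactly the increment of $LB$. The case $m\equiv n\equiv 0\pmod 6$ works the same way with $6\times 6$ tiles. For the small cases, I would proceed as follows.
\begin{itemize}
\item $P_1\square P_n$: every vertex has degree at most $2$, so all $n$ vertices must be infected.
\item $P_2\square P_n$: each vertex has degree at most $3$, so an uninfected vertex needs all neighbours infected, including its partner in the other row. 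The uninfected set is therefore independent with no two in the same column. Together with the four corners this gives $n+2$, and an alternating pattern achieves it.
\item $P_3\square P_n$: the lower bound $\lceil(3n+1)/2\rceil$ comes from a column-pair counting argument, exceeding $LB$. For the upper bound, I would infect the top and bottom rows alternately, with the middle row arranged so that each middle cell is infected once its vertical neighbours are, again verified block by block.
\end{itemize}
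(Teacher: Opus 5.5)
\textbf{The gap: the upper bounds in the main cases.} For $m\equiv 2\pmod 3$ and for $m\equiv n\equiv 0\pmod 6$, you never exhibit a percolating set of size $LB$. ``Diagonal stripes of slope compatible with period $3$, with hand-fixed ends'' is not something anyone can check.

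These constructions are the entire content of those cases. The paper does not reprove them; it imports them from \cite{DNR23} and proves only the lower bound. They are not routine, because optimal sets are very rigid, as your own potential shows. If $m\equiv 2\pmod 3$ and $m,n$ are both even, then $LB=(mn+m+n+4)/3$ exactly. So an optimal $\mathcal I_0$ has exactly one adjacent infected pair on each side of the grid and no other adjacent pair. Moreover, $3|\mathcal I|-e(\mathcal I)$ must stay constant throughout the process; in particular, no initially uninfected cell may have four initially infected neighbours.

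Your proposed induction on $n$ also hides real work. Placing percolating sets side by side generally costs extra cells (compare the $+1$ in Theorem~\ref{4rt}). You would need an $m\times 6$ block with exactly $2(m+1)$ infected cells that still percolates when glued to its neighbours. The block $B$ of Lemma~\ref{lem:repeatB} does this, and even there the last row needs a separate cap. You would also need explicit end pieces for each residue of $n$ modulo $6$, each verified. As written, your proposal proves only $s_3(m,n)\ge LB$ in these cases.

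\textbf{The other parts are in better shape.} Your potential $3|\mathcal I|-e(\mathcal I)$ is equivalent to the perimeter argument of Theorem~\ref{thm:rectlower}, since $P(S)=\sum_{v\in S}\deg(v)-2e(S)$. It is slightly cleaner because the boundary term $b$ never appears, and it gives $3|\mathcal I_0|\ge mn+m+n+e(\mathcal I_0)$.

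The parity step you call the main obstacle is short, and there is no ``surplus event'' alternative. A non-corner boundary cell has degree $3$, so two adjacent uninfected boundary cells can never become infected. On a side of even length whose corners are both infected, the infected cells therefore cannot simply alternate. So each even side contains an edge of $\mathcal I_0$, and $e(\mathcal I_0)\ge 2a$.

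Your arguments for $m=1$ and $m=2$ are complete.

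For $m=3$, the column-pair bound is only asserted, though it is true. The only interior cells lie in the middle row, and an uninfected component contains at most one boundary cell. This forces at least $3$ initially infected cells in any two consecutive columns, and $2$ in each end column. Your $m=3$ construction is too vague to verify: an interior middle cell needs a horizontal infected neighbour besides its vertical ones, and for even $n$ strict alternation in the outer rows misses a corner. A working set is the odd columns of rows $1$ and $3$, plus column $n$ of both rows when $n$ is even, together with the even columns of row $2$ other than column $n$.
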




The results for $m\equiv 2\pmod{3}$ and $m\equiv n\equiv 0\pmod{6}$ follow from~\cite{DNR23} and the result for $m=3$ is due to Hed\v{z}et and Henning~\cite{HH23}.  For $m=2$ see, e.g., \cite{DNR23}.

Building on work of Benevides, Bermond, Lesfari, and Nisse~\cite{BBL24} who studied the case of $m=n$, Dukes, Noel, and Romer showed the following in \cite[Theorem 1.8]{DNR23}:

\begin{theorem}\label{rectangleC0}
    The value $s_3(m,n)=\frac{mn+m+n}{3}$ if and only if $m=n=2^k-1$ for some integer $k$.
\end{theorem}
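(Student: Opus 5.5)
This theorem is quoted from \cite[Theorem 1.8]{DNR23}. To reprove it I would combine a perimeter-type accounting argument, which controls when the lower bound is attained, with a recursive construction for the sufficiency direction.

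\textbf{Lower bound and its slack.} For $3$-neighbor percolation on $P_m\square P_n$ I use a potential function. Let $\Phi$ be the number of edges between infected and uninfected vertices, plus a boundary correction. A convenient choice is
\[
\Phi(A)=\sum_{v\in A}\bigl(4-\deg(v)\bigr)+|\partial A|,
\]
where $\partial A$ is the set of edges leaving $A$. Infecting a vertex with $j\ge 3$ infected neighbours changes $\Phi$ by $4-2j\le -2$, and by exactly $-2$ when $j=3$. At the end $\Phi(V)=2(m+n)$. Hence $4|\mathcal I_0|-2(\#\text{steps})\ge 2(m+n)$, with $\#\text{steps}=mn-|\mathcal I_0|$. (Here I take $\mathcal I_0$ independent, so that $\Phi(\mathcal I_0)=4|\mathcal I_0|$, and sequentialise the process.) This gives $6|\mathcal I_0|\ge 2mn+2(m+n)$, i.e.\ the $LB$ for odd dimensions. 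Equality $s_3=\frac{mn+m+n}{3}$ therefore forces three rigid conditions: $\mathcal I_0$ is independent, and every vertex is infected with exactly three infected neighbours, never four. In particular no uninfected vertex ever sees four infected neighbours.

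\textbf{Necessity.} The main step is to extract $m=n=2^k-1$ from these equality conditions. I would argue that in a tight process the infected region at each stage is a union of disjoint "closed" rectangles, and that two rectangles merge only in a specific way. Consider the final step: the last vertex infected has exactly three infected neighbours. Tracing backwards, tightness should force the grid to decompose as two disjoint subrectangles separated by a full row or column. Each subrectangle is then itself tightly percolated, say of sizes $a\times b$ and $a'\times b$. For tightness to be additive, the count $\frac{ab+a+b}{3}+\frac{a'b+a'+b}{3}$ must equal $\frac{(a+a'+1)b+(a+a'+1)+b}{3}$. This equation forces the rectangle to be cut into pieces arranged in a cross. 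I expect that a full cross-shaped separator — a middle row and a middle column, giving four quadrants — is the forced structure, and hence $m=2m'+1$, $n=2n'+1$, with each quadrant an $m'\times n'$ tight grid. Induction down to the $1\times1$ base case then yields $m=n=2^k-1$. The obstacle is proving that the separator is a full central cross; a single line is not enough. I would try first to show that the four-neighbour-free condition prevents any infected rectangle from growing except by absorbing an entire adjacent row or column at once. I expect this step to be the hardest.

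\textbf{Sufficiency.} Construct the sets recursively. Take four copies of a tight percolating set for the $(2^{k-1}-1)$-square in the quadrants, and add the appropriate independent vertices on the central cross so that the cross fills with exactly three neighbours at each step. The count matches $\frac{n^2+2n}{3}$, because $4\cdot\frac{n'^2+2n'}{3}+$(cross contribution) gives the right total. I would verify this count and that the cross fills, using the $s_3(1,n)$ pattern from Theorem~\ref{thm:oldcases} as a model for how a line with infected neighbours on both sides fills.
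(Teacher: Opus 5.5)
Your sufficiency direction is sound, but the ``only if'' direction has a genuine gap. That direction is the real content of the result the paper quotes from Dukes--Noel--Romer, and the paper does not reprove it.

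\textbf{What works.} The construction is exactly the Four Rectangle Trick (Theorem~\ref{4rt}) with $x=y=2^{k-1}$. Only the centre cell is added on the cross. Indeed, writing $m+1=(m_1+1)+(m_2+1)$ and $n+1=(n_1+1)+(n_2+1)$, one has $\frac{(m+1)(n+1)-1}{3}=1+\sum_{i,j}\frac{(m_i+1)(n_j+1)-1}{3}$, the same computation as in Corollary~\ref{cor:rect3}. The $s_3(1,n)$ analogy is misleading, since there every cell must be infected. The arms fill outward from the centre because each arm cell already has both side neighbours infected by the quadrants.

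Your equality analysis is also correct, with two fixes. Use $\Phi(\mathcal{I}_0)=4|\mathcal{I}_0|-2e(\mathcal{I}_0)\le 4|\mathcal{I}_0|$ rather than assuming $\mathcal{I}_0$ is independent. Dispose of even $m$ or $n$ via Theorem~\ref{thm:rectlower}.

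\textbf{Where necessity fails.} The mechanism you propose is false: $3$-neighbour infected sets are not unions of rectangles, even in tight processes. That is a $2$-neighbour phenomenon. Take the tight $7\times 7$ set: the tight $3\times 3$ set (corners and centre) in each corner $3\times 3$ block, plus $(4,4)$. After one round the infected set is the four corner blocks together with the plus $\{(4,4),(3,4),(5,4),(4,3),(4,5)\}$. After two rounds it is everything except the four mid-side boundary cells. Neither is a union of rectangles.

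Your additivity equation never holds: its two sides differ by exactly $\frac13$ for all $a,a',b$. Once you include the $s$ infected cells on the separating row, it forces $s=\frac13$. So it shows that a single-line split into independently percolating halves is impossible; it does not force a cross. Likewise, ``tracing back from the last infected vertex'' gives nothing. In a tight process the last cells infected are boundary cells, which have only three neighbours anyway.

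\textbf{What is missing.} You need a structural lemma extracted from the equality case. By the percolation criterion quoted in the introduction (Lemma 3.1 of Dukes--Noel--Romer), tightness means: $S$ is independent, and every component of $G-S$ is a tree containing exactly one boundary vertex. In particular the boundary alternates and $m,n$ are odd.

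One natural target is that some row $i$ and some column $j$ meet $S$ only in $(i,j)$; this is what the recursive examples look like. Granting it, the rest is routine, and you should supply it:
\begin{enumerate}
\item Each quadrant $Q$ inherits a percolating set $S\cap Q$. The quadrant corner diagonal to $(i,j)$ must lie in $S$, or it would join the trees rooted at $(i,1)$ and $(1,j)$. A component of $Q-S$ with two $Q$-boundary cells would create a cycle, or a tree with two roots, in $G-S$.
\item The identity above together with Theorem~\ref{thm:rectlower} then forces every quadrant to be tight.
\item Induction on the quadrants $m_1\times n_1$, $m_1\times n_2$, $m_2\times n_1$ gives $m_1=n_1=n_2=m_2=2^{k'}-1$, hence $m=n=2^{k'+1}-1$.
\end{enumerate}
Centrality is an output of this induction, not something you must prove directly. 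As written, though, the existence of such a splitting (or any substitute) is exactly the step you leave open, so necessity is unproved.
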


\begin{corollary}\label{cor:rect1}
   Using \cite[Lemma 4.6]{DNR23}, it follows that $s_3(m,n)=\frac{mn+m+n}{3}+1$ when $m\equiv n\equiv 1\pmod{6}$ but either $m\neq n$ or $m+1$ is not a power of $2$.
\end{corollary}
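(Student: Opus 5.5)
The plan is to prove a matching lower and upper bound. Since $m\equiv n\equiv 1\pmod 6$, both $m$ and $n$ are odd, so $LB=\lceil (mn+m+n)/3\rceil$. Moreover $mn+m+n\equiv 1+1+1\equiv 0\pmod 3$, so $LB=\frac{mn+m+n}{3}$ exactly, and this is an integer.

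For the lower bound, I would use the folklore bound from Section~\ref{sec:Rectangles}, which gives $s_3(m,n)\ge LB=\frac{mn+m+n}{3}$. By Theorem~\ref{rectangleC0}, equality holds if and only if $m=n=2^k-1$ for some integer $k$. The hypothesis says that either $m\ne n$, or $m+1$ is not a power of $2$. In either case the pair $(m,n)$ is not of the form $m=n=2^k-1$, so $s_3(m,n)\neq \frac{mn+m+n}{3}$. Since $s_3(m,n)$ is an integer, this gives $s_3(m,n)\ge \frac{mn+m+n}{3}+1$.

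For the upper bound, I would invoke \cite[Lemma 4.6]{DNR23}. I expect it to supply a percolating set of size $\frac{mn+m+n}{3}+1$ for grids with $m\equiv n\equiv 1\pmod 6$. Its typical mechanism is to build a near-optimal periodic diagonal pattern in the grid, with all perimeter requirements met except one unit of slack. This pattern is then extended in blocks of $6$ in each direction, which keeps the count exactly $\frac{mn+m+n}{3}+1$.

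The only real obstacle is checking that the lemma applies to every pair in the residue class, including small cases such as $m=1$ or $m=7$ with $n$ arbitrary. If the lemma does not cover these directly, I would handle $m=1$ by Theorem~\ref{thm:oldcases}: there $s_3(1,n)=n=\frac{n+1+n}{3}+1$ exactly when $n\equiv 1 \pmod 3$. For the smallest remaining base cases, I would give explicit constructions. Combining the two bounds gives the claimed equality.
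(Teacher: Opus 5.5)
Your core argument is the one the paper intends. Since $m\equiv n\equiv 1\pmod 6$, the quantity $\frac{mn+m+n}{3}$ is an integer, and Theorem~\ref{rectangleC0} plus integrality lifts the folklore bound to $\frac{mn+m+n}{3}+1$ whenever $(m,n)$ is not of the form $m=n=2^k-1$. The matching upper bound is simply imported from \cite[Lemma 4.6]{DNR23}. The paper gives nothing beyond that citation, so your guess at the lemma's internal mechanism is neither needed nor checked.

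Your small-case paragraph, however, contains a false step. For $m=1$ you assert $s_3(1,n)=n=\frac{n+1+n}{3}+1$ whenever $n\equiv 1\pmod 3$. But $\frac{2n+1}{3}+1=\frac{2n+4}{3}$, which equals $n$ only when $n=4$. For $n\equiv 1\pmod 6$ with $n\ge 7$ one actually has $s_3(1,n)=n>\frac{2n+4}{3}$; for example $s_3(1,7)=7$, while the formula gives $6$. So the corollary as literally stated is false when a side equals $1$. No construction can rescue this case, and whatever the precise hypotheses of the cited lemma are, they must exclude it.

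The right move is to recognize the implicit hypothesis $m,n\ge 7$ and run your two bounds there. In this residue class that is equivalent to $m,n\neq 1$, and it is the range needed for Theorem~\ref{thm:rectall}.

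In that range you could even avoid the citation. For $m,n\ge 13$, apply the Four Rectangle Trick (Theorem~\ref{4rt}) with $x=y=8$, using $s_3(7,7)=21$ and $s_3(a,b)=\frac{(a+1)(b+1)}{3}$ for $a\equiv 5\pmod 6$ and $b$ odd. This gives exactly $1+21+\frac{8(m-7)}{3}+\frac{8(n-7)}{3}+\frac{(m-7)(n-7)}{3}=\frac{mn+m+n}{3}+1$. For $m=7<n$, take a $7\times 7$ block and a $7\times(n-8)$ block separated by one column containing a single infected cell; this gives the same count.
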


Before stating our general result, we resolve a few small cases:
\begin{restatable}[]{theorem}{rectanglesmall}\label{thm:rectsmall}
For $n\ge 4$,
        \begin{align*}
    s_3(4,n)&=\begin{cases*}
    $LB$& when $n\equiv{2}\pmod{3}$, $n$ even, or $n=7$,\\
    $LB+1$& otherwise.
    \end{cases*}\\
     s_3(6,n)&=\begin{cases*}
    $LB$& when $n\equiv{2}\pmod{3}$, $n$ even, or $n=7, 9, 15$,\\
    $LB+1$& otherwise.
    \end{cases*}
        \end{align*}
\end{restatable}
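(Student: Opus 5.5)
The plan has two halves: a lower bound $s_3(4,n),s_3(6,n)\ge LB$, with $+1$ in the exceptional classes, and matching explicit constructions. The baseline $s_3\ge LB$ is the folklore perimeter bound proved in Section~\ref{sec:Rectangles}. The standard argument runs as follows. Each newly infected cell has at least $3$ infected neighbours among its at most $4$. So the perimeter of the infected region, measured as the number of edges between infected and uninfected cells plus edges to the outside of the grid, changes by at most $4-2\cdot 3=-2$ per infection. The process starts with perimeter at most $4|\mathcal I_0|$ and ends at $2(m+n)$. Counting more carefully with the $-2$ per step, and with the parity correction coming from corners and even sides, gives $LB$.

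Several cases are then immediate and need no new work.
\begin{itemize}
\item For $m=4$ or $6$ and $n\equiv 2\pmod 3$, the claim is Theorem~\ref{thm:oldcases} applied with the roles of $m,n$ swapped.
\item For $m=6$ and $n\equiv 0\pmod 6$, the claim is also covered by Theorem~\ref{thm:oldcases}.
\end{itemize}
For the remaining ``$LB$'' cases ($n$ even, plus $n=7$ for $m=4$ and $n=7,9,15$ for $m=6$), I would give explicit percolating sets of size $LB$. I would build these by gluing periodic blocks: a $4\times 3$ (respectively $6\times 3$) block pattern that adds exactly $(3m+m+3)/3$ cells per three columns, together with end caps handling $n \bmod 6$. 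The sporadic sizes $n=7,9,15$ would be exhibited by pictures.

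For the ``$LB+1$'' upper bounds, I would take an $LB$-type construction for $n-1$ or $n+1$ and extend or trim one column, checking that this costs at most one extra cell beyond $LB(n)$.

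The real content is the matching lower bound $s_3\ge LB+1$ in the ``otherwise'' cases, namely $n$ odd with $n\not\equiv 2\pmod 3$ and $n$ not one of the sporadic values. I would analyse when the perimeter bound can be tight. Equality, or near-equality up to the ceiling slack, forces three things:
\begin{itemize}
\item every infection step occurs with exactly $3$ infected neighbours;
\item the initial infected cells are pairwise non-adjacent;
\item the boundary columns and rows are infected in a rigid way, with corners initially infected and the sides alternating.
\end{itemize}
Writing $LB=(mn+m+n+c)/3+\delta$, where $\delta\in\{0,1/3,2/3\}$ is the rounding slack, the number of ``wasted'' perimeter units allowed is $3\delta$ minus the parity requirement. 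In the residue classes with $n$ odd and $m\in\{4,6\}$ this slack is small, at most one defect.

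I would then do a column-by-column structural analysis. A width-$4$ or width-$6$ strip in which the infection spreads with no defects must be tiled by a small set of rigid local patterns, namely $3$-column units. I would also use a parity argument on the odd length $n$: with the wrong residue of $n$ mod $3$ and odd $n$, the units cannot close up at both ends. This is exactly where $n=7,9,15$ escape: their lengths admit special closing configurations.

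The main obstacle is this exhaustive local case analysis showing that no defect-free, or one-defect, configuration exists for other $n$. I would try first to prove a ``cutting'' lemma: in any near-optimal percolating set, some column separates the grid into two near-optimal pieces. That lemma would reduce the problem to finitely many small $n$, which could then be checked by hand or by computer.
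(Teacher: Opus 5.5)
Your proposal has a genuine gap in the part you yourself call the real content: the lower bound $s_3\ge LB+1$ for odd $n\not\equiv 2\pmod 3$ is never argued. The rigid $3$-column tiling, the parity obstruction to ``closing up at both ends'', and the cutting lemma are neither formulated nor proved, so nothing you write excludes a percolating set of size $LB$.

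The missing idea is local, and it goes as follows.
\begin{enumerate}
\item Sharpen the perimeter count to $|S_0|\ge (mn+m+n+c+d)/3$. Here $c$ is the number of adjacent pairs of initially infected cells anywhere in the grid, and $d$ is the number of initially uninfected cells with four initially infected neighbours.
\item The two short sides have even length, so $c\ge 2$.
\item A long row has odd length $n$, infected ends and no two consecutive uninfected cells. With $k$ infected cells it contains $2k-n-1$ adjacent infected pairs, an even number. Pairs there come two at a time and already supply the needed extra defects, so you may assume both long rows alternate.
\item Run a short forced-move analysis in the first $4$ columns (for $m=4$) or the first $8$ columns (for $m=6$). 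Use only that corners are infected, no uninfected path joins two boundary cells, every $2\times 2$ square contains an infected cell, and no further defect is created. This gives a contradiction, so each end carries an extra defect.
\end{enumerate}
For $m=4$ you need one extra defect when $n\equiv 3\pmod 6$ and two when $n\equiv 1\pmod 6$; for $m=6$ it is the other way round. This also explains the exceptions. For $n=7$ with $m=4$, and $n=9,15$ with $m=6$, the two end windows overlap, so one defect can serve both ends. For $n=7$ with $m=6$ the $8$-column window does not fit. The exceptions are not special ``closing configurations'' of a tiling.

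The upper-bound half also has a hole. Appending a column to a percolating $m\times(n-1)$ set, keeping that set, costs at least $3$ new cells for $m=4$ and $4$ for $m=6$. Both new corners are forced, and the new boundary column may not contain two consecutive uninfected cells. The budget $LB(m,n)+1-s_3(m,n-1)$ is only $2$, or $3$ when $m=6$ and $n\equiv 3\pmod 6$. For instance, $s_3(4,n-1)=(5n+4)/3$ while $LB(4,n)+1=(5n+10)/3$ for $n\equiv 1\pmod 6$. So this route lands at $LB+2$ or worse, and ``trimming'' a column is not a well-defined operation.

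Your $LB$ constructions for even $n$ and for $n=7,9,15$ are also only promised, not given. The paper takes the $m=4$ upper bounds, including $n=7$, from Hed\v{z}et--Henning, and the bound for $m=6$, $n\equiv 3\pmod 6$, from Lemma 4.5 of Dukes--Noel--Romer. It writes out explicit percolating sets for $m=6$ with $n\equiv 1\pmod 3$ and with $n=7,9,15$. One minor slip: the cost per three added columns is $m+1$, not $(3m+m+3)/3$.
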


    \begin{restatable}[]{theorem}{rectangle}\label{thm:rectall}
        For $m,n\ge 7$,
        \[
        s_3(m,n)=LB,
        \]
        except: $s_3(m,n)=LB+1$ when $m,n\equiv 1\pmod{6}$ and $m,n\equiv 3\pmod{6}$ (unless $m=n=2^k-1$ for some integer $k$, where the value is determined by Theorem \ref{rectangleC0}).
    \end{restatable}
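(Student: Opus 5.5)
The proof has two halves: the lower bound $s_3(m,n)\ge LB$ together with the exceptional $+1$, and matching constructions.

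\textbf{Lower bound.} The folklore bound $s_3(m,n)\ge LB$ comes from the perimeter/potential argument promised for Section~\ref{sec:Rectangles}. Let $\Phi$ be the number of edges of $P_m\square P_n$ with at least one uninfected endpoint, plus a boundary correction that counts each boundary vertex's missing neighbors. When a vertex becomes infected it has at most one uninfected neighbor, so $\Phi$ drops by at least $3-1=2$ in the appropriate normalization. Comparing the initial value of $\Phi$ with its final value $0$ gives $3|\mathcal I_0|\ge mn+m+n$. For even dimensions there is an additional loss: the infection cannot "close" cleanly along an even-length side without wasting at least one unit at each end, and this yields the $+2$ and $+4$ terms.

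For the exceptional classes $m\equiv n\equiv 1$ and $m\equiv n\equiv 3\pmod 6$, the quantity $mn+m+n$ is divisible by $3$. Equality would force every infection step to be tight, meaning every newly infected vertex has exactly three infected neighbors and $\mathcal I_0$ is independent in the appropriate sense. Theorem~\ref{rectangleC0} says this tightness happens only when $m=n=2^k-1$. Therefore, outside that case, $s_3\ge LB+1$. Corollary~\ref{cor:rect1} already records this for $1\pmod 6$, and I would check that the same Dukes--Noel--Romer characterization covers the $3\pmod 6$ case with no extra work.

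\textbf{Upper bound.} I would argue inductively by gluing. The key gadget is a strip lemma: if $A$ percolates $P_m\square P_n$, then $A$ together with roughly $m+\frac{2m}{3}$ (more precisely, about $(3m+c)/3\cdot 3/3$) well-placed extra cells percolates $P_m\square P_{n+3}$. The resulting increment is $\frac{3m+3}{3}=m+1$, which matches exactly how $LB$ grows when $n\mapsto n+3$, since $\frac{m(n+3)+m+n+3}{3}-\frac{mn+m+n}{3}=m+1$. The extra cells go in a pattern of every third cell in the two new columns, with endpoint adjustments, arranged so that once the old rectangle is fully infected the new $3$-column strip fills column by column.

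Using this lemma, I reduce $(m,n)$ with $m,n\ge7$ to base cases with $m,n\in\{7,\dots,12\}$. Residues $2\pmod 3$ are already covered by Theorem~\ref{thm:oldcases}, and residues of $m=4,6$ appear in Theorem~\ref{thm:rectsmall}; these serve as seeds when one side is small, with transposition used to grow the other side. For each residue pair mod $6$, which determines both the parity term and the ceiling, I need explicit base constructions attaining $LB$ (or $LB+1$ in the exceptional classes). Where they exist, these come from Theorem~\ref{thm:oldcases} and Theorem~\ref{thm:rectsmall}; otherwise I would construct them by hand or by computer.

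\textbf{Main obstacle.} The hard part is making the step $n\mapsto n+3$ lose nothing when the ceiling in $LB$ has slack, and handling parity changes. Adding $3$ flips the parity of $n$, which changes the $+2$/$+4$ correction. To avoid this I would instead use $n\mapsto n+6$ steps, adding two strips with increment $2m+2$, and verify that the ceiling gives exactly this increment in every residue class. In the non-exceptional classes that are adjacent to exceptional ones (for example $m\equiv1$, $n\equiv3\pmod 6$), I must build base constructions achieving $LB$ exactly. These are the delicate explicit patterns, and I expect finding and verifying them to be the main work.
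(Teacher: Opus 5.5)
Your lower-bound half matches the paper: the perimeter count with one forced adjacent infected pair on each even-length side, and for $m\equiv n\equiv 1$ or $3\pmod 6$ the extra $+1$ comes straight from Theorem~\ref{rectangleC0}. That theorem is an ``if and only if'' for all $m,n$, so the $3\pmod 6$ case needs nothing more.

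The gap is in the upper bound: the strip lemma everything rests on is false at the cost you need. Since $A$ still infects the old block inside the larger grid, and closures are order-independent, $A\cup S$ percolates $P_m\square P_{n+k}$ if and only if the new cells $S$ percolate the $m\times k$ strip with its inner neighbouring column permanently infected. Running the perimeter count on that strip alone gives
\[
|S|\ \ge\ \frac{k(m+1)+e(S)+s_1}{3},
\]
where $e(S)$ counts adjacent pairs inside $S$ and $s_1$ counts cells of $S$ in the column next to the old block. Hitting the increment $k(m+1)/3$ therefore forces $e(S)=s_1=0$.

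For $k=3$ this is impossible for every $m$. The new outer column has both corners infected and no two consecutive uninfected cells, so with $e(S)=0$ it must alternate. That requires $m$ odd and uses $\frac{m+1}{2}$ cells in the odd rows. This leaves $\frac{m+1}{2}$ cells for the middle column, which must avoid those odd rows, but only $\frac{m-1}{2}$ even rows exist. So each $3$-column step costs at least $m+2$, and two steps cost at least $2m+4>2m+2$. Your count ``every third cell in the two new columns'' gives about $2m/3$ cells, which is also inconsistent with $m+1$.

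For $k=6$ and $m$ even, the outer column has even length and must contain an adjacent infected pair, so $|S|\ge 2m+3$. Thus every extension across an even-length side overshoots $LB$.

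Consequently, no reduction to finitely many base cases with $m,n\in\{7,\dots,12\}$ exists in any class with an even side. In $\{0,1\},\{0,3\},\{4,1\},\{4,3\}$ only the even dimension can be grown, across the odd side. In $\{0,4\},\{4,4\}$ neither dimension can be grown. You would need explicit infinite families, which is the real content of the theorem. Seeds from Theorem~\ref{thm:rectsmall} do not help where they equal $LB+1$, e.g.\ $s_3(4,n)$ for $n\equiv 1\pmod 6$, $n\ne 7$.

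The missing idea is to glue pieces that are \emph{not} percolating sets of their own subrectangles, so that no adjacent pair is wasted at an interface. The paper stacks blocks $A$, $A'$, $B$ that infect everything except one or two interface rows (Lemmas~\ref{lem:AorA'} and~\ref{lem:repeatB}). It also repeats a header block $C$ horizontally; its leftover uninfected cells link across copies into a single path that is infected from one end (Theorems~\ref{thm:rectangle4even}--\ref{thm:rect43}).

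Your plan also supplies no upper bound of $LB+1$ in the exceptional classes. The paper obtains these from Corollary~\ref{cor:rect1} and from the Four Rectangle Trick (Theorem~\ref{4rt}, Corollary~\ref{cor:rect3}).
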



In the case of toroidal grids, Flocchini, Lodi, Luccio, Pagli, and Santoro showed the following in \cite{FLL04}:

\begin{theorem}
    For all $m,n\ge 1$, \[
    \lceil{\frac{mn+1}{3}\rceil}\le t_3(m,n)\le\min\{\lceil{m/3\rceil}(n+1),\lceil{n/3\rceil}(m+1)\}.\]
\end{theorem}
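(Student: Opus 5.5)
The two bounds are proved separately: the lower bound by a potential (edge-counting) argument, the upper bound by an explicit construction.

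\emph{Lower bound.} On $C_m\square C_n$ every vertex has degree $4$. Let $\Phi(t)$ be the number of edges with at least one endpoint outside $\mathcal{I}_t$; equivalently, the number of edges not lying inside $\mathcal{I}_t$. Initially
\[
\Phi(0)\le |E| = 2mn .
\]
When an uninfected vertex $x$ becomes infected, it has at least three infected neighbours. The edges from $x$ to those three neighbours become internal to the infected set, while at most one edge at $x$ stays uninternal. So infecting $x$ lowers $\Phi$ by at least $3-1=2$; counting precisely, at least $3$ edges leave and at most $1$ remains. Processing the infections one at a time (this is legitimate, since the rule is monotone), each newly infected vertex reduces $\Phi$ by at least $2$.

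At the end $\Phi=0$. There are $mn-|\mathcal{I}_0|$ infection steps, so we need
\[
2(mn-|\mathcal{I}_0|)\le \Phi(0).
\]
To sharpen this, note that $\Phi(0)=2mn-e(\mathcal{I}_0)$, where $e(\mathcal{I}_0)$ is the number of edges inside $\mathcal{I}_0$. The inequality becomes $2|\mathcal{I}_0|\ge e(\mathcal{I}_0)$, which is too weak on its own. So I would refine the count. When $x$ is infected with exactly $3$ infected neighbours, $\Phi$ drops by exactly $2$; with $4$ infected neighbours it drops by $4$. The real gain is the crude bound $\Phi(0)\le 4|V\setminus\mathcal{I}_0|$, since each edge counted by $\Phi(0)$ has an uninfected endpoint. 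Combining gives
\[
2(mn-|\mathcal{I}_0|)\le 4(mn-|\mathcal{I}_0|),
\]
which is useless. So instead I would count, in the original way, the edges with both endpoints uninfected. Let $U=V\setminus\mathcal{I}_0$, and let $\Psi(t)$ be the number of edges with both endpoints uninfected at time $t$. Each infection of $x$ removes the at most one edge from $x$ to an uninfected neighbour, so $\Psi$ drops by at most $1$ per infection, and $\Psi$ must reach $0$. Hence
\[
e(U)\le |U|-1 .
\]
The final infection necessarily removes $0$ such edges, which gives the $-1$. Thus $U$ induces a forest. Every vertex has degree $4$, so
\[
e(U)=\frac{4|U|-e(U,\mathcal{I}_0)}{2}\ge 2|U|-2|\mathcal{I}_0|,
\]
using $e(U,\mathcal{I}_0)\le 4|\mathcal{I}_0|$. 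Combining, $2|U|-2|\mathcal{I}_0|\le |U|-1$, so $|U|\le 2|\mathcal{I}_0|-1$. Then $mn\le 3|\mathcal{I}_0|-1$, and taking ceilings gives $|\mathcal{I}_0|\ge \lceil (mn+1)/3\rceil$.

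\emph{Upper bound.} By symmetry it suffices to prove $t_3(m,n)\le \lceil m/3\rceil(n+1)$. Choose about $\lceil m/3\rceil$ rows spaced at most three apart cyclically: rows $0,3,6,\dots$, with a shorter final gap if $3\nmid m$. In each chosen row, infect all $n$ vertices. Also, in each gap of two uninfected rows between consecutive chosen rows, infect one seed vertex in one of the gap rows.

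Consider a gap consisting of rows $r+1,r+2$ between full rows $r$ and $r+3$. The seed at $(r+1,j)$ has neighbours $(r,j)$ and $(r+1,j\pm1)$. Now $(r+1,j+1)$ has infected neighbours $(r,j+1)$ and $(r+1,j)$, which is only two. So instead I would seed and then propagate diagonally. Vertex $(r+2,j)$ sees $(r+3,j)$ and $(r+1,j)$, again only two. I would fix this by placing the seed so that it produces a $2\times 1$ block. A direct check is needed here, and this is the main obstacle: choosing the extra vertex per gap so that the gap fills. A workable variant is to infect the rows so that the gaps have width one or two. For a width-one gap, each vertex already sees two full rows, so one seed starts a sweep around the cycle $C_n$. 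For width two, I would use a pattern of one full row plus shifted seeds, verified by hand. In each case the cost is $n+1$ per period of three rows, giving the bound, and the analogous column version gives the minimum.
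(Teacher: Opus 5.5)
Your lower bound, once the abandoned attempts are removed, is correct and is the paper's argument in complementary bookkeeping. The paper tracks the perimeter $P=e(\mathcal{I}_t,U_t)$, which drops by at least $2$ per infection and by $4$ at the last one. You track $e(U_t)$, which drops by at most $1$ per infection and by $0$ at the last one. Since $P=4|U|-2e(U)$ on a $4$-regular graph, your inequalities $e(U)\le|U|-1$ and $e(U)\ge 2|U|-2|\mathcal{I}_0|$ are exactly the paper's two inequalities. Your first potential $\Phi$ would also have worked: it drops by at least $3$ per infection, not $2$, because the edge to the remaining uninfected neighbour was already counted. It drops by $4$ at the last step, which gives $3|\mathcal{I}_0|\ge mn+1+e(\mathcal{I}_0)$ directly.

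For the upper bound, which the paper does not reprove but quotes from Flocchini et al., your proposal has a genuine gap. The full-row scheme cannot be rescued by placing the seed more cleverly. The two rows of a gap between full rows contain the $n$ squares on columns $\{j,j+1\}$. An uninfected $2\times 2$ square is a $4$-cycle that can never be infected, so each of these squares needs an initially infected cell, and each cell lies in only two of them. Hence every two-row gap needs at least $n/2$ seeds, and full rows every third row cost about $3n/2$ per period rather than $n+1$. Your fallback with one-row gaps costs $n+1$ per \emph{two} rows, i.e.\ roughly $\lceil m/2\rceil(n+1)$. The width-two pattern is never specified, and by the count above it cannot exist with $O(1)$ seeds.

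The missing idea is to spend the $n$ cells of each period on two rows that complete themselves, so that the third row becomes exactly your one-row gap. For $m,n\ge 3$, infect $(i,j)$ in three cases: $i\equiv 1\pmod 3$ and $j$ odd; $i\equiv 2\pmod 3$ and $j$ even; $i\equiv 0\pmod 3$ and $j=1$. This uses at most $\lceil m/3\rceil(\lceil n/2\rceil+\lfloor n/2\rfloor+1)=\lceil m/3\rceil(n+1)$ cells. Every uninfected cell in a row $i\not\equiv 0\pmod 3$ has two infected horizontal neighbours. It gets a third in the adjacent row carrying the other pattern; for row $m$ when $m\equiv 1\pmod 3$, it gets it from row $1$ once that row has filled. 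The exceptions are $(i,1)$ and $(i,n)$ in rows $i\equiv 2\pmod 3$ when $n$ is odd; these are infected with the help of the seed $(i+1,1)$, or of $(1,1)$ and $(1,n)$ if $i=m$. So all rows $i\not\equiv 0\pmod 3$ fill within two rounds. Each row $i\equiv 0\pmod 3$ then lies between two full rows, and its seed sweeps around it.

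Finally, both halves need $C_m\square C_n$ to be simple and $4$-regular, i.e.\ $m,n\ge 3$. For the upper bound this is not a formality. By the paper's small-torus theorem, $t_3(2,3)=4$ and $t_3(1,6)=6$, exceeding the stated minima $3$ and $4$. So the inequality $t_3(m,n)\le\lceil m/3\rceil(n+1)$ to which you reduce fails, e.g., at $(m,n)=(3,2)$.
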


In many cases, we establish that this lower bound is tight. In the remaining cases, we improve the upper bound to obtain a gap of size $1$.

\begin{restatable}[]{theorem}{torusexact}\label{thm:torusexact}
For $m,n\ge 5$ and $\{m,n\}\neq\{1,2\}\pmod{3}$, $\{m,n\}\neq\{4\}\pmod{6}$, and $\{m,n\}\neq\{2\}\pmod{6}$,
\[
t_3(m,n)=\left\lceil{\frac{mn+1}{3}}\right\rceil.
\]
\end{restatable}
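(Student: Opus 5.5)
The lower bound $t_3(m,n)\ge\lceil (mn+1)/3\rceil$ is the result of Flocchini et al.\ quoted above, so all the work is in the matching upper bound. I would prove it by explicit constructions on $C_m\square C_n$, organized by residues modulo $3$ (and modulo $6$ where needed). The excluded cases show which pairs are easy. Up to symmetry, the allowed pairs $(m \bmod 3, n \bmod 3)$ are $(0,\ast)$, $(1,1)$ and $(2,2)$, with some parity restrictions modulo $6$ on $(1,1)$ and $(2,2)$.

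\textbf{Potential function.} To guide the constructions I would use the standard perimeter-type potential. On the $4$-regular torus, infecting a vertex with at least $3$ infected neighbors reduces the number of edges between infected and uninfected vertices by at least $2$. Write $\Phi=4|\mathcal I|-2e(\mathcal I)$ for this boundary count. An exactly tight set must have $|\mathcal I_0|$ close to $(mn+1)/3$, and every step must be ``efficient'': each newly infected vertex has exactly $3$ infected neighbors, except for $O(1)$ slack. This tells me to look for initial sets that are nearly independent and lie on a lattice-like pattern of density $1/3$, such as diagonal stripes $\{(i,j): i+j\equiv 0 \pmod 3\}$ with small defects.

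\textbf{Case $3\mid m$.} I would take one vertex from every third diagonal in the $n$-direction and wrap it consistently, checking that the diagonal pattern closes up around the torus. The pattern needs about $mn/3$ vertices, and a single extra vertex seeds the process. I would then verify percolation by induction: the infection grows row by row, or sweeps diagonal by diagonal, and each new vertex sees two initially infected neighbors plus one previously infected vertex. When $n\not\equiv 0\pmod 3$ the diagonal pattern does not close, and I would fix this with a local modification in a strip of width $O(1)$, costing exactly the ceiling rounding. I would use the known constructions for rectangles (Theorem~\ref{thm:oldcases} and \cite{DNR23}) as building blocks. One option is to cut the torus into a rectangle whose infected boundary supplies the extra neighbors. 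A cleaner route is a reduction lemma: if a set percolates on $C_m\square C_n$, then adding a suitable $3\times n$ band of $n$ vertices gives a percolating set on $C_{m+3}\square C_n$. This adds exactly $n$, matching the increase of $\lceil (mn+1)/3\rceil$ when $m$ grows by $3$.

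\textbf{Cases $(1,1)$ and $(2,2)$.} The same band-insertion lemma, applied in both directions, reduces these cases to finitely many base tori, say $5\le m,n\le 10$, in the allowed classes. For the base tori I would give explicit configurations, possibly computer-verified, each of size exactly $\lceil (mn+1)/3\rceil$. Adding $3$ to $m$ increases $(mn+1)/3$ by exactly $n$ only when the rounding behaves consistently. I would check that the ceiling correction is stable under $m\mapsto m+3$ for the allowed residues, and use steps of $6$ instead of $3$ where parity matters. This is presumably where the mod $6$ exclusions enter.

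\textbf{Main obstacle.} The hardest step will be the band-insertion lemma. A band of $n$ vertices inserted into a percolating configuration has to be reactivated by the infection coming from the old configuration, and the wrap-around in the $n$-direction must not create a blocked cycle. I would first try inserting a diagonal staircase band aligned with the direction in which the old process finishes. If that fails, I would instead design every base configuration to contain a fully infected ``last column'' phase, so that the new band is infected column by column.
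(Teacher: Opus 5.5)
\textbf{There is a genuine gap: the band-insertion lemma is false as stated, and no constructions are actually given.} Your lower bound is the paper's, but the upper bound is not established.

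A generic band-insertion lemma cannot hold for an arbitrary percolating set. By Theorem~\ref{thm:smalltorus}, $t_3(4,4)=6$. Your lemma would then produce a percolating set of size $6+4=10$ on $C_7\square C_4$. But $t_3(4,7)=\lceil 21/2\rceil=11$.

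The failure is global. Whether the leftover uninfected cells close up into one cycle or one path depends on how they wrap around the torus, and that changes when a side grows. Your own diagonal pattern shows this. For $3\mid m,n$, the set $\{(i,j): i+j\equiv 0\pmod 3\}$ leaves a $2$-regular uninfected subgraph. It has $\gcd(m,n)/3$ disjoint staircase cycles, one for each orbit of $(i,j)\mapsto(i+1,j-1)$ on the cells with $i+j\equiv 1\pmod 3$. So the pattern costs $mn/3+\gcd(m,n)/3$, e.g.\ $14>13$ on $C_6\square C_6$. For $3\nmid n$ it is not even defined on the torus.

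Your explanation of the mod $6$ exclusions is also wrong. We have $\lceil((m+3)n+1)/3\rceil=\lceil(mn+1)/3\rceil+n$ identically, so the rounding never changes. In fact, if step-$3$ insertion worked in both directions, it would carry the tight tori $5\times5$ and $7\times7$ (Theorems~\ref{torusodd2} and~\ref{torusodd1}) to $8\times8$ and $10\times10$. Those lie in exactly the classes $m\equiv n\equiv 2,4\pmod 6$ that the paper can only bound to within $1$.

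What is missing is the real content of the theorem: explicit constructions for every $(m,n)$ in each allowed class, with percolation verified. The paper uses no insertion lemma; it proceeds case by case.
\begin{itemize}
\item For $3\mid n$ and $m$ odd, it infects columns $1\pmod 3$ in odd rows and $2\pmod 3$ in even rows, and columns $0\pmod 3$ in the last row. After one round, the remaining uninfected cells form a single cycle, so one extra cell finishes.
\item For $3\mid n$, $m$ even and $n\equiv 0\pmod 6$, it uses $t_3(m,n)\le 1+s_3(m-1,n-1)$ (Theorem~\ref{thm:recttotorus}).
\item For $3\mid n$, $m$ even and $n\equiv 3\pmod 6$, it uses a modified row pattern. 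The residual paths $P_i,Q_i$ concatenate into one cycle precisely because $n/3$ is odd.
\item For $m\equiv 2\pmod 3$, $n\equiv 5\pmod 6$, and for $m\equiv 1\pmod 3$, $n\equiv 1\pmod 6$, it uses $3$-row blocks that are $6$-periodic in the columns, with boundary corrections. It checks that the leftover uninfected cells form a single path whose endpoint has three infected neighbors.
\end{itemize}
The need for one odd side in these last two families is what produces the mod $6$ restrictions in the statement. To turn your plan into a proof, you would need constructions of this kind, with the single-cycle or single-path property checked uniformly in $m$ and $n$. A finite set of computer-checked base tori plus an unproven insertion step does not suffice.
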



\begin{restatable}[]{theorem}{torusbounds}\label{thm:torusbounds}
For the remaining cases with $m,n\ge 5$ and either $\{m,n\}\equiv\{1,2\}\pmod{3}$ or $m\equiv n\in\{2,4\}\pmod{6}$,
\[
\left\lceil{\frac{mn+1}{3}}\right\rceil\le t_3(m,n)\le \left\lceil{\frac{mn+1}{3}}\right\rceil+1.
\]
\end{restatable}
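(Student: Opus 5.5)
The lower bound is immediate from the theorem of Flocchini, Lodi, Luccio, Pagli, and Santoro, so the whole content of Theorem~\ref{thm:torusbounds} is the upper bound $t_3(m,n)\le\lceil (mn+1)/3\rceil+1$. I would obtain it by explicit constructions, reusing the building blocks developed for Theorem~\ref{thm:torusexact} and paying one extra infected vertex to repair the defect that prevents exactness in these residue classes.

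The first tool is the standard perimeter/edge-count argument adapted to the torus. Under $3$-neighbor percolation on the $4$-regular graph $C_m\square C_n$, each newly infected vertex has at least three infected neighbors. So the number of edges between infected and uninfected vertices drops by at least $2$ at each infection. Since this boundary starts at most $4|\mathcal{I}_0|$ and ends at $0$, we get $|\mathcal{I}_0|\ge \lceil (mn+1)/3\rceil$ (recovering the known lower bound). Equality or near-equality forces almost every infection step to be \emph{tight}, i.e.\ to have exactly three infected neighbors, and forces the initial set to be nearly independent. The upper bound $\lceil (mn+1)/3\rceil+1$ corresponds to a slack of at most about $3$ in this edge count, which is what the constructions must achieve.

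The construction I would use is a strip decomposition. Write $m=3a+r$ and cut the torus into horizontal bands of height $3$, plus one or two leftover bands of height $1$ or $2$ according to $r$. In each height-$3$ band, infect the middle row at every other position, plus a periodic pattern in the band's boundary rows. This is the same pattern that gives the exact value when $m\equiv 0\pmod 3$, with about $n$ vertices per three rows, so its density is $1/3$. The leftover bands (for $m\equiv 1$ or $2\pmod 3$) are handled by a local gadget along one seam, with a similar decomposition in the $n$ direction.

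When $\{m,n\}\equiv\{1,2\}\pmod 3$, or $m\equiv n\in\{2,4\}\pmod 6$, the seams in the two directions cannot both be closed tightly: the parity or period-$3$ phase mismatch around the cycle leaves one position that would need a non-tight step. I would fix this by adding a single extra infected vertex at the crossing of the two seams. The key steps, in order, are:
\begin{enumerate}
\item Specify the base periodic pattern and show that it infects a full band of height $3$ once the band's boundary rows receive help from their neighbors.
\item Specify the seam gadgets for each residue pair, namely $(1,2)$, $(2,2)\pmod 6$ and $(4,4)\pmod 6$ (using the symmetry $t_3(m,n)=t_3(n,m)$), together with the one extra vertex.
\item Verify percolation by ordering the infections: first the seam crossing, then propagation along the seams, then the bands in turn.
\item Count the vertices and check that the total equals $\lceil (mn+1)/3\rceil+1$ exactly, or at most that.
\end{enumerate}
The small moduli $m,n\ge 5$ need checking separately where a band decomposition degenerates.

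The main obstacle is step (ii) combined with the count in step (iv). The gadget must be designed so that the seam defect costs exactly one vertex in every subcase, and not two. I would first try to take the optimal seam gadgets from the $P_m\square P_n$ constructions behind Theorem~\ref{thm:rectall}, namely the row/column patterns with $LB$ vertices, and glue them across the wraparound. I would then check, using the edge-count identity, that the gluing loses at most $3$ in slack. If direct gluing fails for the $(4,4)\pmod 6$ case, I would shift the phase of alternate bands by one column so that the mismatch concentrates at a single point.
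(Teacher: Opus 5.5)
\textbf{The upper bound is not proved.} Your lower bound is fine: it is Theorem~\ref{thm:toruslower}. As you wrote the edge count, though, it only gives $mn/3$; the $+1$ comes from the last vertex infected, all four of whose neighbors are already infected. The real gap is that the upper bound, which is the entire content of the statement, is never established. You specify no infected set, no seam gadget and no count, and you yourself leave steps (ii) and (iv) open. The target is exact up to an additive constant, so ``density-$1/3$ bands plus a local gadget'' proves nothing until the gadget is written down and checked.

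Two further points in the plan are off:
\begin{itemize}
\item In step (iii), propagation along a seam from the crossing requires both off-seam neighbors of each seam cell to be infected already. So in this kind of construction the bands must percolate first and the seams fill in last, not the reverse.
\item Your explanation that the seams ``cannot both be closed tightly'' is unproven. Whether $\lceil (mn+1)/3\rceil$ is attained in these classes is open in general (see the paper's closing question), so it cannot serve as a premise.
\end{itemize}

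\textbf{The missing idea.} It is the reduction $t_3(m,n)\le 1+s_3(m-1,n-1)$ (Theorem~\ref{thm:recttotorus}): infect $(1,1)$ together with a minimum percolating set of the $(m-1)\times(n-1)$ rectangle left after deleting row $1$ and column $1$. This is presumably what your ``extra vertex at the crossing of the seams'' plus ``gluing rectangle gadgets'' should become. It must use the $(m-1)\times(n-1)$ rectangle, because the identity $(m-1)(n-1)+(m-1)+(n-1)=mn-1$ is what cancels the rectangle's boundary cost. Transplanting an $m\times n$ rectangle pattern would overshoot by about $(m+n)/3$.

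Combined with the rectangle results, the reduction settles two families:
\begin{itemize}
\item If $m\equiv n\equiv 2,4\pmod 6$, then $s_3(m-1,n-1)\le\frac{mn-1}{3}+1$ by Theorem~\ref{rectangleC0} and Corollaries~\ref{cor:rect1} and~\ref{cor:rect3}, giving $t_3\le\frac{mn+5}{3}$.
\item If $m\equiv 2$, $n\equiv 4\pmod 6$, Theorem~\ref{thm:rect13} gives $t_3\le\frac{mn+4}{3}$.
\end{itemize}

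It provably falls one short in two other situations:
\begin{itemize}
\item If $m\equiv 1$, $n\equiv 5\pmod 6$, then $m-1,n-1$ are both even. Theorem~\ref{thm:rectlower} gives $s_3(m-1,n-1)\ge LB=\frac{mn+4}{3}$, so the reduction yields only $\frac{mn+7}{3}$.
\item If $n=7$ and $m\equiv 2\pmod 6$ with $m\ge 14$, the same failure occurs because of the $6\times n'$ exception in Theorem~\ref{width6}.
\end{itemize}
In the first case both dimensions are odd, so your alternating band pattern cannot even close up around the torus without a defect. These cases need genuinely toroidal constructions with $\frac{mn+4}{3}$ cells, which the paper builds in Theorems~\ref{thm:torus15} and~\ref{thm:torus21}, and your proposal supplies none. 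In particular, the class $\{m,n\}\equiv\{1,2\}\pmod 3$ must be split into its four classes mod $6$, since they behave differently.
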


Note that the upper bound is tight when $m=5$ and $n\equiv 1\pmod{3}$, as well as when $m=7$ and $n\equiv 2\pmod{3}$.
\begin{restatable}[]{theorem}{tightfive}\label{thm:5tight}
    For $n\ge 7$ and $n\equiv 1\pmod{3}$,
    \[
    t_3(5,n)=\frac{5n+4}{3}.
    \]
    For $n\ge 5$ and $n\equiv 2\pmod{3}$,
    \[
    t_3(7,n)=\frac{7n+4}{3}.
    \]
\end{restatable}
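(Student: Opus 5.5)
The upper bounds in both statements come directly from Theorem~\ref{thm:torusbounds}. For $m=5$ and $n\equiv 1\pmod 3$ we have $5n+1\equiv 0\pmod 3$, and for $m=7$ and $n\equiv 2\pmod 3$ we have $7n+1\equiv 0\pmod 3$. In both cases the bound $\lceil (mn+1)/3\rceil+1$ equals the claimed value. So everything reduces to ruling out a percolating set $\mathcal{I}_0$ with $|\mathcal{I}_0|=(mn+1)/3$ exactly. My plan is to extract a rigid structure from equality in the standard edge-boundary argument, and then show that this structure cannot exist on $C_5\square C_n$ or $C_7\square C_n$ in these residue classes.

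\textbf{Step 1: rigid structure at equality.} Run the process one vertex at a time, breaking ties arbitrarily. Track $\Phi$, the number of edges between infected and uninfected vertices. Initially $\Phi=4|\mathcal{I}_0|-2e(\mathcal{I}_0)$. Infecting a vertex with $a\ge 3$ infected neighbours changes $\Phi$ by $4-2a\le -2$, and the last vertex has $a=4$, so that step changes $\Phi$ by $-4$. Since $\Phi$ ends at $0$, this gives
\[
3|\mathcal{I}_0|\ \ge\ mn+1+e(\mathcal{I}_0)+\#\{\text{non-final vertices infected with } a=4\}.
\]
Equality at $|\mathcal{I}_0|=(mn+1)/3$ therefore forces two things. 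First, $\mathcal{I}_0$ is independent. Second, every infected vertex other than the last has exactly $3$ earlier neighbours, hence exactly one later neighbour, which is itself non-initial. Orient each non-final vertex of $R:=V\setminus\mathcal{I}_0$ toward its unique later neighbour. This shows that $R$ induces a tree: it is connected and acyclic, with $|R|-1$ edges. So the task is purely structural: show that $C_5\square C_n$ ($n\equiv1$) and $C_7\square C_n$ ($n\equiv 2$) have no independent set $I$ of size $(mn+1)/3$ whose complement induces a tree.

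\textbf{Step 2: local constraints.} Acyclicity of $G[R]$ means $I$ must meet every cycle. In particular it meets every column $C_m$, every row $C_n$, and every unit $4$-face. Independence then forces each face to contain exactly one vertex of $I$ or two diagonal vertices. Let $a_i=|I\cap \text{column } i|$. Then $a_i\in\{1,2\}$ when $m=5$ and $a_i\in\{1,2,3\}$ when $m=7$, with $\sum a_i=(mn+1)/3$. The face condition between consecutive columns $i,i+1$ is that the two column subsets together hit all $m$ faces between them. With independence, this means the union of the two subsets, projected to $\mathbb{Z}_m$, is a dominating-like set meeting every consecutive pair $\{j,j+1\}$.

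\textbf{Step 3: transfer/discharging count (main obstacle).} I would encode each column by its subset of $\mathbb{Z}_m$ up to rotation and build the transfer digraph of admissible consecutive pairs. The tree condition adds connectivity, and also the count $e(R)=|R|-1$, which per column pair says the number of horizontal edges in $R$ plus vertical edges in $R$ sums exactly to $|R|-1$. I would assign each column a charge $a_i-m/3$ plus a local correction from its edges in $R$. The aim is to show that any admissible cyclic sequence has total charge congruent to something forcing $n\equiv 0$ or $2\pmod 3$ when $m=5$, and $n\equiv 0$ or $1\pmod 3$ when $m=7$. Equivalently, the short closed walks in the transfer digraph achieving the extremal density have lengths in a restricted set of residues mod $3$. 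I expect this enumeration to be the hard part, especially for $m=7$ where columns may carry $3$ vertices. I would try first to show that an extremal configuration must be periodic with period $3$ (up to a single defect column, which accounts for the $+1$). Then I would check directly that the defect cannot close up consistently, keeping $R$ acyclic and connected, when $n$ is in the forbidden residue class. A computer-assisted check of the finite transfer digraph is a fallback if the hand analysis becomes unwieldy.
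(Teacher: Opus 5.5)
\textbf{Verdict: there is a genuine gap.} The reduction (Steps 1 and 2) is correct, but the lower bound is never actually proved, because Step 3 contains no argument.

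\textbf{What is correct.} Your upper-bound citation is right and matches the paper. Step 1 is correct and slightly stronger than what the paper extracts. At $|\mathcal{I}_0|=(mn+1)/3$ you get that $I$ is independent and that $G[R]$ is a tree. In particular, no vertex of $R$ has all four neighbours in $I$, since it would be an isolated vertex of $G[R]$ while $|R|\ge 2$. Step 2 is also correct.

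\textbf{The gap.} Step 3 is a plan, not a proof. The charge, the transfer digraph, the claimed period-$3$ structure with one defect, and the mod-$3$ conclusion are all left unspecified, with a computer check as a fallback. No contradiction is ever derived.

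The residue framing is also misdirected. Once $I$ is independent with tree complement, the count $3|I|=mn+1$ already forces $n\equiv 1$ (for $m=5$) or $n\equiv 2$ (for $m=7$) mod $3$. So there is no extremal periodic family to perturb. What must be shown is that the structure does not exist at all. Only its local consequences are needed for this: independence, every column and every $2\times 2$ face meets $I$, and no vertex of $R$ is surrounded by $I$.

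\textbf{The missing argument for $m=5$.} Write $n=3x+1$.
\begin{enumerate}
\item Each column of $C_5$ meets $I$ and is independent, so $a_i\in\{1,2\}$. Since $\sum a_i=5x+2$, exactly $x$ columns have weight $1$ and $2x+1$ have weight $2$.
\item By pigeonhole on the $x$ cyclic gaps between weight-$1$ columns, some three consecutive columns all have weight $2$.
\item Independence puts the middle column's two cells at rows $j$ and $j+2$. Each neighbouring column then needs two nonadjacent cells among rows $j+1,j+3,j+4$. Rows $j+3$ and $j+4$ are adjacent, so both neighbouring columns contain row $j+1$.
\item The cell in row $j+1$ of the middle column is therefore in $R$ with all four neighbours in $I$, which is a contradiction.
\end{enumerate}

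\textbf{The missing argument for $m=7$.} Write $n=3x+2$, so $a_i\in\{1,2,3\}$ and $\sum a_i=7x+5$. You need the following ingredients.
\begin{itemize}
\item \emph{Two forbidden weight patterns.} The patterns $3,3,\ge 2$ and $2,3,2,3,2$ cannot occur in consecutive columns. This is proved by normalising a weight-$3$ column to rows $1,3,5$. The case analysis then uses independence, the face condition, and the no-isolated-vertex condition, including one small enclosed plus-shaped component of $G[R]$.
\item \emph{Adjacent columns are heavy.} Any two adjacent columns contain at least $4$ vertices of $I$, since each vertex hits at most two of the seven faces between them.
\item \emph{The count.} With $a$ columns of weight $1$, there are $x+1+a$ of weight $3$ and $2x+1-2a$ of weight $2$.
\item \emph{The case split $a=0$, $a=1$, $a\ge 2$.} In the last case, a region between consecutive weight-$1$ columns containing $y$ weight-$3$ columns forces at least $2y-4$ weight-$2$ columns. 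Summing over regions gives $2x+1-2a\ge 2x+2-2a$, a contradiction.
\end{itemize}

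Without these steps, or an actually executed finite computation that provably covers every $n$ in the residue class, the lower bound is not established.
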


A note on notation: as it is convenient to us, we will use a mixture of `grid' terminology and graph terminology. We will refer to an infected agent as either a `vertex' or `cell', yet still talk about its neighbors and its degree.  Whenever this language introduces ambiguity, we will be careful to clarify what is meant through precise definitions and/or diagrams. Further, we stick to index standards where possible -- a vertex indexed by $(i,j)$ lies in row $i$ and column $j$, where we view column $1$ as leftmost and row $1$ as top. For toroidal grids, each cell $(m,a)$ in the last row is adjacent to $(1,a)$ and each cell $(a,n)$ in the last column is adjacent to $(a,1)$.

Throughout, we shall refer to the {\bf{perimeter}} of a set of infected vertices $S$, denoted $P(S)$; this is simply the number of edges between infected vertices and uninfected vertices in the host graph. To be precise, we set $P(S)=|E(S,\overline{S})|$ for any $S\subseteq V(G)$.

Throughout, we will make use of the observations encapsulated in the following lemma.
\begin{lemma}[Lemma 3.1 in \cite{DNR23}]
    Let $k\ge 1$, and consider a graph $G$ with maximum degree $\Delta(G)\le k+1$ and $S\subseteq V(G)$.  The set $S$ percolates in the $k$-neighbor bootstrap process on $G$ if and only if $S\supseteq\{v\in V(G):d(v)<k\}$ and every component of $G-S$ is a tree with at most one vertex of degree $k$.
\end{lemma}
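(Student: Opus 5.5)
We prove the two directions of the equivalence separately, writing $\langle S\rangle$ for the closure as in the paper.

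\emph{Necessity.} Suppose $S$ percolates. A vertex $v$ with $d(v)<k$ can never have $k$ infected neighbours, so it must already lie in $S$; hence $S\supseteq\{v:d(v)<k\}$.

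For the component structure, the plan is to track edges inside the uninfected set. Let $H_t=G-\mathcal{I}_t$. When a vertex $x\notin\mathcal{I}_t$ becomes infected, it has at least $k$ infected neighbours. Since $d(x)\le k+1$, at that moment $x$ has at most one uninfected neighbour. Vertices infected in the same round are never adjacent to one another in a way that matters: if two adjacent vertices $x,y$ were infected simultaneously, each would have at most one uninfected neighbour, and we could process them one by one. To avoid subtleties, I would pass to the sequential version of the process, which infects one eligible vertex at a time and has the same closure because infection is monotone.

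In the sequential version each step deletes from the current uninfected graph a vertex of degree at most $1$ there. Reversing the order, $G-S$ is built from the empty graph by repeatedly adding a vertex joined to at most one earlier vertex. Such a build-up produces a forest, so every component of $G-S$ is a tree.

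It remains to show that a component $T$ contains at most one vertex of degree $k$ in $G$ (I take ``degree $k$'' to mean degree in $G$). Suppose $u,w\in T$ both have $d_G(u)=d_G(w)=k$. The first of them to be infected needs all $k$ of its neighbours infected, so at that moment it has no uninfected neighbour. Now consider the path between $u$ and $w$ in $T$; every vertex on it is uninfected at the start. In the sequential process, orient each edge of $T$ from the endpoint infected earlier to the one infected later; the vertex removed first had degree at most $1$ at removal. A cleaner counting version is the following. A vertex $v\in T$ can be infected only if the number of its $T$-neighbours still uninfected is at most $d_G(v)-k$. Summing $d_G(v)-k$ over $T$, we need at least $|E(T)|=|T|-1$, because each edge of $T$ is ``used'' at the later endpoint's infection time, when the other endpoint is already gone: each edge is charged exactly once, to the endpoint infected first, which at that time has that edge still present. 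Since $d_G(v)-k\in\{0,1\}$ for the non-forced vertices, the number of vertices of degree $k+1$ in $T$ is at least $|T|-1$. So at most one vertex of $T$ has degree $k$, and moreover none has degree $<k$, since those lie in $S$.

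\emph{Sufficiency.} Conversely, assume the conditions hold. Take a component $T$ with at most one degree-$k$ vertex $r$; if there is none, choose $r$ arbitrarily. Root $T$ at $r$ and infect the vertices leaf-first. A non-root vertex $v$ has degree $k+1$, and when all its children are infected, its only uninfected neighbour is its parent. Its neighbours outside $T$ lie in $S$, since $T$ is a whole component of $G-S$, so $v$ has $k$ infected neighbours and becomes infected. Finally the root has all its neighbours infected and $d(r)\ge k$, so it is infected too.

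\emph{Main obstacle.} The delicate step is the charging argument in necessity: it must assign each tree edge to exactly one infection event, correctly accounting for simultaneous infections. Passing to the sequential process beforehand resolves this.
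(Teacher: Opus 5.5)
The paper does not prove this lemma; it quotes it from \cite{DNR23}, so there is no in-paper argument to compare against. Judged on its own, your proof is correct, with three places to tidy.

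\textbf{Why each step works.} Sequentializing is legitimate, since any maximal sequential run has the same closure. The reverse-elimination argument does show that $G-S$ is a forest. The charging is also right. Let $c(v)$ be the number of neighbours of $v$ still uninfected when $v$ is infected. These neighbours all lie in $T$, because neighbours outside $T$ are in $S$. Each edge of $T$ is counted exactly once, at its earlier-infected endpoint. Hence $|T|-1=|E(T)|=\sum_{v\in T}c(v)\le\#\{v\in T:d(v)=k+1\}$. The leaf-first order correctly handles sufficiency.

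\textbf{Expository fixes.} First, the paragraph starting ``Suppose $u,w\in T$'' stops mid-argument. Either delete it or finish it: orienting edges from the earlier to the later endpoint gives $v$ out-degree $c(v)\le d(v)-k$. So $u$ and $w$ are sinks, and some vertex on the $u$--$w$ path has both path edges pointing away from it, i.e.\ out-degree $2$, a contradiction. Second, the phrase ``used at the later endpoint's infection time, when the other endpoint is already gone'' contradicts the correct charging to the earlier endpoint that follows it; keep only the latter. Third, the remark that simultaneously infected vertices are ``never adjacent in a way that matters'' is vague, and it is unnecessary once you sequentialize.

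\textbf{A streamlining.} You can merge the two necessity steps. The last-infected vertex $v^\ast$ of $T$ has $c(v^\ast)=0$, so $|E(T)|\le|T|-1$ directly. Together with connectivity, this shows at once that $T$ is a tree and that every $v\neq v^\ast$ has degree $k+1$.

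\textbf{A shorter route to necessity.} You can avoid both sequentialization and counting. Suppose a component of $G-S$ contained a cycle, or a path joining two degree-$k$ vertices. Look at the first time any of its vertices is infected. One step earlier, each of its vertices has at most $k-1$ infected neighbours: either two uninfected neighbours on the cycle or path and degree at most $k+1$, or, at a path end, one uninfected neighbour and degree $k$. This is a contradiction, and simultaneous infections cause no trouble. Your counting version is longer but quantitative: it shows the degree-$k$ vertex must be the last one infected in its component. It is also essentially the same bookkeeping as the perimeter arguments in Theorems~\ref{thm:rectlower} and~\ref{thm:toruslower}.

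\textbf{Finiteness.} Your sufficiency argument and the sequentialization both use that $G$ is finite. This is implicit in the lemma and genuinely needed: for $k=1$, the two-way infinite path with $S=\emptyset$ satisfies the conditions but does not percolate.
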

In particular, in the setting of rectangular grids, every corner must be initially infected and there is no path of uninfected cells whose endpoints both lie on the boundary. In both the rectangle and torus settings, every row, column, and $2\times 2$ square contains at least one infected cell.

\section{Rectangles}\label{sec:Rectangles}
Recall that $LB=\lceil{\frac{mn+m+n}{3}\rceil}$ for $m,n$ odd, $\lceil{\frac{mn+m+n+2}{3}\rceil}$ if one is even, and $\lceil{\frac{mn+m+n+4}{3}\rceil}$ if both are even. In this section, we will prove the following:
\rectangle*

First, we establish that LB is a lower bound for $s_3(m,n)$ for all pairs $m,n$.

\begin{theorem}\label{thm:rectlower}
$s_3(m,n)\ge LB$.
\end{theorem}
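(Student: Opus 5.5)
The approach is the standard perimeter argument for $3$-neighbor percolation on graphs of maximum degree $4$, refined by tracking how much perimeter is wasted at the boundary and corners. Let $S=\mathcal{I}_0$ percolate, and follow the perimeter $P(\mathcal{I}_t)$ as vertices are infected one at a time. When an uninfected vertex $v$ of degree $d(v)$ is infected, it has at least $3$ infected neighbours and at most $d(v)-3$ uninfected ones, so the perimeter changes by $(d(v)-a)-a\le d(v)-6$, where $a\ge 3$ is its number of infected neighbours. Interior vertices ($d=4$) therefore reduce the perimeter by at least $2$, boundary non-corner vertices ($d=3$) by exactly $3$, and corners cannot be infected later at all, so they lie in $S$.

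Summing over the $mn-|S|$ vertices infected after time $0$ gives
\[
P(S)-P(V)\ \ge\ 2\bigl((m-2)(n-2)-|S_{\mathrm{int}}|\bigr)+3\bigl(2(m-2)+2(n-2)-|S_{\partial}|\bigr),
\]
where $S_{\mathrm{int}}$ and $S_\partial$ are the initially infected interior and non-corner boundary vertices, and $P(V)=0$. On the other side, $P(S)\le 4|S_{\mathrm{int}}|+3|S_\partial|+2\cdot 4$. I will rearrange this, simplifying by using the slack in the inequality $P(S)\le\sum_{v\in S}d(v)$. The plain count gives only something like $|S|\ge (mn+\dots)/3$ with weaker lower-order terms, so the accounting has to be sharpened in two ways. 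First, any edge between two vertices of $S$ contributes nothing to $P(S)$, and more importantly each corner contributes at most $2$ but each corner's two boundary neighbours are constrained. Second, I will use the acyclicity condition from the Lemma (Lemma 3.1 of \cite{DNR23}): $G-S$ is a forest, so $|E(G-S)|\le |V(G-S)|-c$, where $c$ is the number of components. Concretely, counting edges gives
\[
|E(G)| = |E(G-S)| + |E(S,\overline S)| + |E(S)|,
\]
with $|E(G)|=2mn-m-n$. Combined with $|E(S,\overline S)|\le \sum_{v\in S}d(v)-2|E(S)|$ and $|E(G-S)|\le mn-|S|-1$, this yields $2mn-m-n\le mn-|S|-1+\sum_{v\in S} d(v)-|E(S)|$. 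Then $\sum_{v\in S}d(v)\le 4|S|-(\#\text{boundary vertices in }S)-2\cdot 4$, and I lower-bound the number of boundary infected vertices: no two consecutive non-corner boundary vertices can both be uninfected, since otherwise there would be a path of uninfected cells joining boundary vertices. Each side of length $m$ therefore has at least about $(m-2)/2$ infected non-corner cells, which produces the $+m+n$ term after dividing by $3$.

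The key steps, in order, are: (i) the edge-count identity with the forest bound; (ii) the bound on $\sum_{v\in S} d(v)$ in terms of corners and boundary vertices of $S$; (iii) the per-side bound of at least $\lceil (\ell-2)/2\rceil$ infected non-corner cells on a side with $\ell$ vertices; (iv) combining these and taking ceilings. Step (iii) is where the parity enters. A side of even length $\ell$ forces $\lceil(\ell-2)/2\rceil=(\ell-2)/2$, while an odd side gives $(\ell-3)/2$ rounded up to $(\ell-2)/2$. The extra half unit for each pair of even sides is what produces the $+2$ and $+4$ in the definition of $LB$.

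The main obstacle will be getting the constants exactly right, so that the three parity cases come out as $mn+m+n$, $+2$ and $+4$ over $3$. I expect I may need to use that $|E(S)|\ge 0$ is not tight, or that $G-S$ has at least one component, and to check the arithmetic on a small case such as the $2\times n$ grid against $s_3(2,n)=n+2$ from Theorem~\ref{thm:oldcases} to calibrate the constants.
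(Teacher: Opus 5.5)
There is a genuine gap. The route you commit to in steps (i)--(iv) cannot produce the $+m+n$ term.

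Carry out your own edge count with $|E(G-S)|\le mn-|S|-1$ and $\sum_{v\in S}d(v)=4|S|-D-8$, where $D$ is the number of infected non-corner boundary cells. This gives
\[
3|S|\ \ge\ mn-m-n+9+D+|E(S)|.
\]
A boundary count $D\approx m+n$ therefore only cancels the $-m-n$ coming from $|E(G)|=2mn-m-n$, leaving $|S|$ of order $mn/3$. Your proposed $2\times n$ calibration would expose this: it yields roughly $2n/3$ rather than $n+2$.

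The missing ingredient is the full strength of the Lemma. Each component of $G-S$ is a tree containing at most one degree-$3$ vertex. So the number of components is at least the number $b=2m+2n-8-D$ of uninfected boundary cells, not merely $1$. With this, $D$ cancels and you get $3|S|\ge mn+m+n+|E(S)|$.

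This is the same information as the ``exactly $3$'' drop for boundary infections in your first paragraph. In fact, your first two displayed inequalities already give $6(|S|-4)\ge 2mn+2m+2n-24$, i.e.\ exactly $|S|\ge (mn+m+n)/3$. That contradicts your remark that the plain count is weaker; it is precisely the paper's argument, and you abandoned it.

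The parity mechanism is also misattributed. Once $b$ (equivalently $D$) cancels, the per-side count plays no role. Your stated bound $\lceil(\ell-2)/2\rceil$ is also false for odd $\ell$. For $\ell=5$ the side can read infected, uninfected, infected, uninfected, infected, with only one infected non-corner cell; the correct minimum is $\lfloor(\ell-2)/2\rfloor$. Even in your own framework, that half-unit difference per side would only contribute $+a$, not the needed $+2a$.

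The $+2a$ actually comes from $|E(S)|$. A side of even length has both corners infected and no two consecutive uninfected cells, so it must contain two adjacent infected cells. Hence $|E(S)|\ge 2a$. Each such edge lowers the upper bound on $P(S)$ by $2$; equivalently, it adds $1$ to the right-hand side of $3|S|\ge mn+m+n+|E(S)|$. This yields $|S|\ge (mn+m+n+2a)/3$, which is $LB$ after taking ceilings.
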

\begin{proof}
A corner vertex has degree two, so it must be initially infected in order for the set to percolate. Similarly, if two adjacent vertices on the boundary are uninfected, they each have only two neighbors besides each other, and can never become infected. As a result, for a set of infected vertices to percolate, there are never two consecutive uninfected vertices on the boundary. Thus, when $m$ is even, a side of length $m$ must have at least $m/2+1$ initially infected vertices, and consequently has at least one pair of adjacent infected vertices. Thus, there are at least $2a$ pairs of adjacent infected vertices where $a\in\{0,1,2\}$ is the number of $m,n$ which are even.

Recall that the perimeter of a set of infected vertices $S$ is the number of edges between infected and uninfected vertices. When a vertex of $P_m\square P_n$ becomes infected, at least three such edges become edges between two infected vertices while at most one edge is newly between an infected and uninfected vertex. Thus, $P(S)$ decreases by at least $2$ when a new vertex is infected and must decrease by $3$ when a boundary vertex is infected. Thus for the initial set of infected vertices $S_0$ with $b$ uninfected boundary vertices, we have that $P(S_0)-2(mn-|S_0|)-b\ge 0$ or $P(S_0)\ge 2mn-2|S_0|+b$.

To bound $|S_0|$, note that $|S_0|$ necessarily contains $4$ degree $2$ vertices. If $b$ boundary vertices are uninfected, that is missing from $S_0$, then $2m+2n-8-b$ degree $3$ vertices are infected, meaning $|S_0|-4-(2m+2n-8-b)=|S_0|-2m-2n+b+4$ degree $4$ vertices are in $|S_0|$.  Tabulating degrees, this gives an upper bound of \[P(S_0)\le 2(4)+3(2m+2n-8-b)+4(|S_0|-2m-2n+b+4)=4|S_0|-2m-2n+b,\] but we further note that there are at least $2a$ pairs of adjacent infected vertices on the boundary, with each pair reducing this upper bound by $2$ to yield $P(S_0)\le 4|S_0|-2m-2n+b-4a $. Thus,
\begin{align*}
    4|S_0|-2m-2n+b-4a&\ge 2mn-2|S_0|+b\\
    6|S_0|&\ge 2mn+2m+2n+4a\\
    |S_0|&\ge\frac{mn+m+n+2a}{3}.
\end{align*}

Plugging in each possible value of $a$ and recognizing that $s_3(m,n)$ is always an integer gives the desired bound in each parity case.
\end{proof}

\begin{remark}\label{WhenBoundTight}
Note that if there are initially two adjacent infected cells or an uninfected cell adjacent to four infected cells, then the same proof gives $s_3(m,n)\ge \frac{mn+m+n+2a+1}{3}$.
\end{remark}

\begin{corollary}
    If $m,n$ odd but either $m\neq n$ or $m+1$ is not a power of $2$, then $s_3(m,n)\ge \lceil{\frac{mn+m+n+1}{3}\rceil}$.
\end{corollary}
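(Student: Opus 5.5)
The claim follows directly from Theorem~\ref{thm:rectlower} and Theorem~\ref{rectangleC0}, with an integrality argument. Take $m,n$ odd, so $a=0$ and Theorem~\ref{thm:rectlower} gives $s_3(m,n)\ge \frac{mn+m+n}{3}$. We split into cases according to whether $\frac{mn+m+n}{3}$ is an integer.

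\emph{Case 1: $3\nmid mn+m+n$.} Since $s_3(m,n)$ is an integer, $s_3(m,n)\ge \lceil \frac{mn+m+n}{3}\rceil$. Because $mn+m+n$ is not divisible by $3$, the interval $\left(\frac{mn+m+n}{3},\frac{mn+m+n+1}{3}\right]$ contains no integer. Hence $\lceil \frac{mn+m+n}{3}\rceil=\lceil \frac{mn+m+n+1}{3}\rceil$, and the bound holds.

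\emph{Case 2: $3\mid mn+m+n$.} Here $\frac{mn+m+n}{3}$ is an integer and $s_3(m,n)\ge \frac{mn+m+n}{3}$. The hypothesis says that either $m\neq n$, or $m=n$ with $m+1$ not a power of $2$. In either situation it is not true that $m=n=2^k-1$ for some integer $k$. By the ``only if'' direction of Theorem~\ref{rectangleC0}, $s_3(m,n)\neq \frac{mn+m+n}{3}$, so $s_3(m,n)\ge \frac{mn+m+n}{3}+1$. Since $3\mid mn+m+n$, we have $\lceil\frac{mn+m+n+1}{3}\rceil=\frac{mn+m+n}{3}+1$, which gives the claim.

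Alternatively, one can argue in the spirit of Remark~\ref{WhenBoundTight}: if equality held, every inequality in the proof of Theorem~\ref{thm:rectlower} would be tight, and Theorem~\ref{rectangleC0} is exactly the classification of those tight configurations. Either way, the only nontrivial input is the cited characterization in Theorem~\ref{rectangleC0}. The remaining step is the arithmetic check on the ceilings in the two cases.
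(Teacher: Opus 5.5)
Your proposal is correct and follows the same route as the paper. The non-integer case is handled by Theorem~\ref{thm:rectlower} together with integrality, and the integer case by the ``only if'' direction of Theorem~\ref{rectangleC0}; your explicit ceiling arithmetic in each case spells out what the paper's two-line proof leaves implicit.
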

\begin{proof}
    This follows directly from Theorem~\ref{thm:rectlower} except when $\frac{mn+m+n}{3}$ is an integer. In that case, the claim follows from Theorem~\ref{rectangleC0}.
\end{proof}

We now prove Theorem~\ref{thm:rectsmall}; this is split into the cases $m=4$ and $m=6$. 

\begin{theorem}\label{width4}
     For $n\ge 4$, we have $s_3(4,n)=\lfloor{\frac{5n+5}{3}\rfloor}+1$ when $n=7$ or $n\equiv 2\pmod{3}$ and $s_3(4,n)=\lfloor{\frac{5n+5}{3}\rfloor}+2$ otherwise.
\end{theorem}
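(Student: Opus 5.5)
The plan is to get the lower bound from the perimeter count and the upper bound from explicit constructions, and then close the gap in the ``otherwise'' cases by a rigidity argument. First I check that the stated formula agrees with $LB$ in the claimed cases. For $m=4$ we have $LB=\lceil (5n+8)/3\rceil$ when $n$ is even and $LB=\lceil (5n+6)/3\rceil$ when $n$ is odd. A residue check modulo $6$ then shows $\lfloor (5n+5)/3\rfloor+1=LB$ exactly when $n\equiv 2\pmod 3$ or $n$ is even, and that $\lfloor (5n+5)/3\rfloor+1=LB-1$ otherwise. The latter value is impossible by Theorem~\ref{thm:rectlower}. So the statement is equivalent to the $m=4$ half of Theorem~\ref{thm:rectsmall}, and Theorem~\ref{thm:rectlower} already supplies $s_3(4,n)\ge \lfloor (5n+5)/3\rfloor+1$ in every case.

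For the upper bounds I build percolating sets from a periodic pattern. I use a $4\times 3$ block containing $5$ infected cells, arranged so that on rows $1$ and $4$ no two uninfected cells are adjacent and the uninfected cells form paths that are trees with no two boundary endpoints. I then append fixed left and right end-caps (columns containing the corners) depending on $n \bmod 3$. In each case I verify percolation via Lemma 3.1 of \cite{DNR23}: all corners are infected, and every component of $G-S$ is a tree with at most one degree-$3$ vertex and no path joining two boundary cells. For $n\equiv 2\pmod 3$ the caps should give exactly $LB$. For $n\equiv 0,1\pmod 3$ I spend one extra cell, and this gives $LB$ when $n$ is even and $LB+1$ otherwise. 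The case $n=7$ needs an ad hoc configuration with $LB=14$ cells, which I find by hand. Small $n$ ($4\le n\le 9$, say) are checked directly.

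The main obstacle is the matching lower bound $LB+1$ when $n\equiv 0,1\pmod 3$, $n$ odd, $n\ne 7$ (i.e.\ $n\equiv 3\pmod 6$ with $n\ge 9$ and $n\equiv 1\pmod 6$ with $n\ge 13$). Here I return to the inequality $6|S_0|\ge 2mn+2m+2n+4a$ from the proof of Theorem~\ref{thm:rectlower}. If $|S_0|=LB$, the slack $6|S_0|-(2mn+2m+2n+4a)$ is at most $2$ or $4$, computed from the residue. Each excess adjacent infected pair, each uninfected cell with four infected neighbours, and each infection step that lowers the perimeter by more than required costs at least $2$, as in Remark~\ref{WhenBoundTight}. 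So a hypothetical optimal set is almost perfectly efficient, with at most one or two ``defects''. I then argue column by column. Rows $1$ and $4$ must alternate infected/uninfected except near a bounded number of defects, and each interior $2\times 2$ square needs an infected cell. Together these force the contents of consecutive columns into a period-$3$ pattern away from the defects. Finally, I show that this pattern can be closed off at both corners with the available defect budget only when $n\equiv 2\pmod 3$ or $n=7$, where the two ends are close enough to share a defect. I expect this case analysis of defect placements to be the longest and most delicate step.
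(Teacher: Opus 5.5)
Your setup (perimeter count with defects, then a structural argument in the odd cases) is reasonable. The gap is that the decisive lower bound $LB+1$ for odd $n\equiv 0,1\pmod 3$, $n\neq 7$, is only announced, and the mechanism you sketch is not the one that operates.

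You claim that alternation in rows $1,4$ plus the $2\times 2$ condition forces a period-$3$ interior, and that the obstruction is closing this pattern at the corners depending on $n \bmod 3$. For odd $n$ this is wrong. If rows $1$ and $4$ contain no adjacent infected pair, then, since the corners are infected, both are infected exactly in the odd columns, i.e.\ they are in the \emph{same} phase. A pair-free stretch then has rows $2,3$ uninfected in odd columns and exactly one infected cell in rows $2,3$ of each even column. That is period $2$, not $3$. Its uninfected cells form a single path with an uninfected boundary leaf at every even column, so no pair-free stretch can contain two even columns without violating the tree criterion. The period-$3$ (perfect-code) interior appears only when rows $1$ and $4$ are in opposite phase, and for odd $n$ that already costs two extra pairs in one boundary row. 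So the obstruction is parity of the boundary rows plus a local defect at each end, not a mod-$3$ closure.

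What you are missing is the paper's short argument:
\begin{enumerate}
\item[(i)] An odd-length boundary row with infected ends and no two consecutive uninfected cells has an even number of adjacent infected pairs. So either such a row supplies two extra pairs, or both rows alternate perfectly.
\item[(ii)] In the latter case, force columns $1$--$4$. With $(3,1)$ infected, $(2,2)$ must be infected, so $(2,3),(3,2),(3,3)$ are uninfected. Then $(3,4)$ must be infected and $(2,4)$ uninfected, giving an uninfected path from $(4,2)$ to $(1,4)$. Hence each end carries a second adjacent pair, and the two are distinct when $n\ge 8$.
\end{enumerate}
With $|S_0|\ge (5n+4+c)/3$ this gives $c\ge 3$ for $n\equiv 3$ and $c\ge 4$ for $n\equiv 1\pmod 6$, $n\ge 13$. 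That is all that is needed; no global classification of near-optimal sets is required.

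Your bookkeeping also has errors.
\begin{itemize}
\item The residue check is wrong: $\lfloor(5n+5)/3\rfloor+1$ equals $LB$ for $n\equiv 1,2,3,5$ and $LB-1$ for $n\equiv 0,4\pmod 6$. As you state it, the theorem would contradict Theorem~\ref{thm:rectlower} at $n=7$. Your final case list is nevertheless the right one.
\item At $|S_0|=LB$ the slack over $10n+12$ is $0$ for $n\equiv 3$ and $2$ for $n\equiv 1\pmod 6$. That allows at most one extra defect, not ``$2$ or $4$''.
\end{itemize}

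On the upper bound, a $4\times 3$ block with $5$ cells repeated with period $3$ cannot percolate. Each boundary row would need at least two infected cells per block to avoid consecutive uninfected boundary cells. That leaves at most one cell per block in rows $2,3$, and hence a fully uninfected $2\times 2$ square there. Since the boundary rows must essentially alternate, the period has to be $6$: your block alternating with its top--bottom mirror image. The end caps and the $n=7$ configuration still have to be exhibited. The paper does not construct these; it takes $\lfloor(5n+5)/3\rfloor+2$ and the $n=7$ value from Theorem 2.3 of Hed\v{z}et and Henning, and the $n\equiv 2\pmod 3$ case from Theorem~\ref{thm:oldcases}.
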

\begin{proof} A general upper bound of $\lfloor{\frac{5n+5}{3}\rfloor}+2$ as well as the case of $n=7$ are found in Theorem 2.3 of \cite{HH23}. Furthermore, if $n\equiv 2\pmod{3}$, Theorem \ref{thm:oldcases} gives an answer of $\frac{4n+n+4+4}{3}=\frac{5n+8}{3}=\lfloor{\frac{5n+8}{3}\rfloor}$. It remains to show a lower bound of $\lfloor{\frac{5n+5}{3}\rfloor+2}$ for the remaining values of $n$.

When $n\equiv 0,4\pmod{6}$, \[
LB=\lceil{\frac{4n+n+4+4}{3}\rceil}=\lceil{\frac{5n+2}{3}\rceil}+2=\lfloor{\frac{5n+5}{3}\rfloor}+2
\]
where the last equality follows from the fact that $\frac{5n+5}{3}$ is not an integer.

When $n\equiv 1,3\pmod{6}$, we revisit the proof of Theorem~\ref{thm:rectlower}. If there are at least $c$ pairs of adjacent infected vertices, regardless of location, the resulting bound is instead $|S_0|\ge\frac{mn+m+n+c}{3}=\frac{5n+4+c}{3}$. 

When $n\equiv 1\pmod{6}$, then $c\ge 4$ is enough to give a lower bound of $\lceil{\frac{5n+8}{3}\rceil}=\lfloor{\frac{5n+5}{3}\rfloor}+2$, while for $n\equiv 3\pmod{6}$, then $c=3$ is enough to give a lower bound of $\lceil{\frac{5n+7}{3}\rceil}=\lfloor{\frac{5n+5}{3}\rfloor}+2$. Note that in the first and last columns, which have length four, there is necessarily at least one pair of adjacent infected cells. For $n\equiv 1\pmod{6}, n>7$ we must show that there are at least two more such pairs, while for $n\equiv 3\pmod{6}, n>3$, we must show there is at least one more such pair. First note that in order to percolate, all corners must be initially infected. If either the first or last row, which has odd length, contains some pair of adjacent infected cells, it would contain a second such pair, so we may assume that neither the first or last row has a pair of adjacent infected cells. It is now sufficient to show that the first four columns and last four columns each necessarily contain another pair of adjacent infected cells. As $n>7$ for the case $n\equiv 1\pmod{6}$, these will necessarily be distinct pairs.

We focus our attention on the first four columns, with an identical argument possible for the last four columns. Assume for the sake of contradiction that aside from one pair in the first column, there are no pairs of adjacent infected cells. By earlier reasoning, rows $1, 4$ have infected cells in columns $1,3$ and not in columns $2,4$. Without loss of generality, $(2,1)$ is uninfected while $(3,1)$ is infected.
In order to percolate, there is no path between two boundary cells consisting entirely of uninfected cells, so by virtue of $(1,2)$ and $(2,1)$ being uninfected, we have that $(2,2)$ must be infected. 
Assuming for the sake of contradiction that there are no further pairs of adjacent infected cells, we get that $(2,3)$, $(3,2)$, and $(3,3)$ are uninfected.
Now, to avoid a path of uninfected cells from $(4,2)$ to $(4,4)$, we have that $(3,4)$ is infected, meaning $(2,4)$ is uninfected. This creates a path of uninfected cells from $(4,2)$ to $(1,4)$, yielding a contradiction. Thus, we may conclude there is a second pair of adjacent infected cells within the first four columns, as desired.
\end{proof}

\begin{theorem}\label{width6}
For $n\ge6$, $s_3(6,n)=\lceil{\frac{7n+10}{3}\rceil}$ when $n$ even, $s_3(6,n)=\lceil{\frac{7n+8}{3}\rceil}$ when $n\equiv 2\pmod{3}$ or $n=7,9,15$, and $s_3(6,n)=\lceil{\frac{7n+11}{3}\rceil}$ otherwise.
\end{theorem}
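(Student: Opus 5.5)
We follow the template of Theorem~\ref{width4}: compare the claimed values with $LB$, get the matching lower bounds from the refined perimeter count, and supply constructions for the upper bounds. With $m=6$ even, $LB=\lceil (7n+10)/3\rceil$ when $n$ is even and $\lceil (7n+8)/3\rceil$ when $n$ is odd. The claimed values are therefore $LB$ for $n$ even, for $n\equiv 2\pmod 3$, and for $n=7,9,15$, and $\lceil(7n+11)/3\rceil$ otherwise. For the remaining odd $n$, namely $n\equiv 1,3\pmod 6$ with $n\notin\{7,9,15\}$, we have $7n+8\equiv 0$ or $2\pmod 3$. The value $\lceil(7n+11)/3\rceil$ equals $LB+1$ in both subcases: when $n\equiv 1\pmod 6$, $7n+8\equiv 0$, so $LB=(7n+8)/3$ and $\lceil(7n+11)/3\rceil=(7n+11)/3=LB+1$; when $n\equiv 3\pmod 6$, $7n+8\equiv 2$, so $LB=(7n+9)/3$ and $\lceil(7n+11)/3\rceil=(7n+12)/3=LB+1$. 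The lower bound $s_3(6,n)\ge LB$ holds in all cases by Theorem~\ref{thm:rectlower}. The case $n\equiv 2\pmod 3$ is Theorem~\ref{thm:oldcases}, and for $n$ even, $n\equiv 0\pmod 6$ with $m=6$ is also covered by Theorem~\ref{thm:oldcases}. It remains to give constructions achieving $LB$ for $n\equiv 4\pmod 6$ and $n=7,9,15$, and to prove both bounds for the exceptional odd $n$.

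For the upper bounds I would use explicit $6\times n$ patterns. The natural building block is a periodic strip of period $6$ (or $3$) in the long direction. Each $6\times 6$ block carries about $14$ infected cells, arranged so that the uninfected components form trees with at most one degree-$3$ vertex, as required by Lemma~3.1 of \cite{DNR23}. Each residue of $n$ then gets its own end-caps. For $n\equiv 4\pmod 6$, I would start from a $6\times 4$ optimal configuration (or adapt the transpose of Theorem~\ref{width4}'s patterns) and append $6\times 6$ blocks. For $n=7,9,15$ I would give explicit pictures, checked by hand or computer. For the exceptional odd $n$, the general construction with one extra cell at an end-cap gives $LB+1$.

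The main work is the lower bound $LB+1$ for odd $n\equiv 1,3\pmod 6$ with $n\ne 7,9,15$. As in Theorem~\ref{width4}, if $c$ counts adjacent infected pairs (together with uninfected cells having four infected neighbours, as in Remark~\ref{WhenBoundTight}), the perimeter argument gives $|S_0|\ge (7n+6+c)/3$. We therefore need $c\ge 5$ when $n\equiv 1\pmod 6$ and $c\ge 4$ when $n\equiv 3\pmod 6$. The two columns of even length $6$ each force one boundary pair, giving $c\ge 2$. The odd-length rows either contain no adjacent pair or contain at least two, and in the latter case we are already close to done. So the plan is to show that each end region (say the first few columns and the last few columns) forces at least one further pair beyond the column pair. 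We would argue by contradiction, as in the $4\times n$ case, propagating the constraints that the corners are infected, the top and bottom rows alternate, every $2\times2$ square contains an infected cell, and no uninfected path joins two boundary cells.

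The obstacle is the case $n\equiv 1\pmod 6$, which needs a fifth defect. Two end defects do not suffice, so we must also find a defect in the interior. I would attempt a parity or counting argument on the middle section: a defect-free $6\times k$ strip is rigid (essentially periodic with period $3$ or $6$), and this rigidity is incompatible with both end-caps unless $n$ is small, which accounts for the exceptions $7,9,15$. Making that rigidity statement precise, presumably by a finite case check of the column states, is the step I expect to be hardest.
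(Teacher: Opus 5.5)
\textbf{There is a genuine gap, mainly in the lower bound.} Your reduction to the case $n\equiv 1,3\pmod 6$, $n\notin\{7,9,15\}$, where the claim is exactly $LB+1$, is correct and is what the paper uses. So is the refined count $|S_0|\ge(7n+6+c)/3$. The plan goes wrong at an arithmetic step.

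For $n\equiv1\pmod6$ you do not need $c\ge5$. Since $7n+9\equiv1\pmod 3$, already $c\ge3$ gives $|S_0|\ge\lceil(7n+9)/3\rceil=(7n+11)/3$. You rounded correctly for $n\equiv3\pmod6$ but not here. Hence the ``fifth, interior defect'' and the rigidity statement for defect-free $6\times k$ strips are unnecessary. As stated, they are also an unproved claim stronger than the theorem requires. The perimeter count is actually the identity $3|S_0|=7n+6+c+D$ for a percolating set, where $D\ge d$ counts every cell whose four neighbours are all infected at the moment it becomes infected. So a percolating set of size $(7n+11)/3$ has $c+D=5$, and your target $c+d\ge5$ would additionally force $D=d$, for which you give no reason. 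Your account of the exceptions is also off: $9$ and $15$ are $\equiv3\pmod6$ and have nothing to do with an interior argument for $n\equiv1\pmod6$.

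The missing piece is the local lemma that carries the whole lower bound, which you only announce. It says: if rows $1$ and $6$ contain no adjacent infected pair and column $1$ contains exactly one, then columns $1$ through $8$ contain a further adjacent infected pair or an uninfected cell with four infected neighbours. The paper proves this by a chain of forced deductions. It splits on whether the pair in column $1$ lies in rows $\{1,2\}$ or $\{3,4\}$ (rows $\{5,6\}$ being symmetric). The deductions use that every $2\times2$ square meets $S_0$, that no uninfected path joins two boundary cells, and the no-further-defect hypothesis. In the second case the defect produced is the enclosed cell $(2,7)$, not a pair, so your phrase ``at least one further pair'' must include enclosed cells.

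Given the lemma, the cases resolve as follows:
\begin{itemize}
\item For $n\equiv1\pmod6$, $n\ge13$, only the first eight columns are needed.
\item For $n\equiv3\pmod6$, the lemma is applied at both ends, and the two eight-column regions are disjoint exactly when $n\ge16$. This is why $9$ and $15$ are exceptions.
\item $n=7$ is an exception because eight columns do not fit.
\end{itemize}

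Your upper bounds are not yet constructions. ``Append $6\times6$ blocks'' and ``one extra cell at an end-cap'' need explicit patterns and a percolation check. The paper uses a period-$6$ pattern with two different three-column end caps for $n\equiv1,4\pmod6$. It gives explicit patterns for $n=7,9,15$, and cites Lemma 4.5 of Dukes, Noel, and Romer for the remaining $n\equiv3\pmod6$.
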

\begin{proof}
    The cases $n\equiv 0,2,5\pmod{6}$ are handled in Theorem~\ref{thm:oldcases}. For $n\equiv 4\pmod{6}$, $s_3(6,n)\ge LB=\lceil{\frac{7n+10}{3}\rceil}$. For $n\equiv 1,3\pmod{6}$,  $s_3(6,n)\ge LB=\lceil{\frac{7n+8}{3}\rceil}$. For $n=7,9,15$, we present the following matching upper bound constructions:

    For $6\times 7$, infect the cells in odd columns for rows $1,3,6$, cells in columns $2,6$ for row $2$, cells in columns $1,7$ for row $4$, and cells in even columns for row $5$.

For $6\times 9$, infect the cells in odd columns for rows $1,3,6$, cells in columns $2,6,8$ for rows $2,5$, and cells in columns $1,4,9$ for row $4$.

For $6\times 15$, infect the cells in odd columns for rows $1,3,6$, cells in columns $2,6,10,14$ for row $2$, cells in columns $1,4,8,12,15$ for row 4, and cells in columns $2,6,8,10,14$ for row 5.

For the remaining cases with $n\equiv 3\pmod{6}$, the upper bound of $\ceil{\frac{7n+11}{3}}=LB+1$ follows from~\cite[Lemma 4.5]{DNR23}. For the remaining cases with $n\equiv 1\pmod{3}$, we get an upper bound of $LB=\ceil{\frac{7n+10}{3}}$ for $n\equiv 4\pmod{6}$ and $LB+1=\ceil{\frac{7n+11}{3}}$ for $n\equiv 1\pmod{6}$ from the following constructions (which differ only in their last three columns), as illustrated in Figure \ref{fig:6by1}.

\begin{figure}[H]
    \centering
    \includegraphics[width=\textwidth]{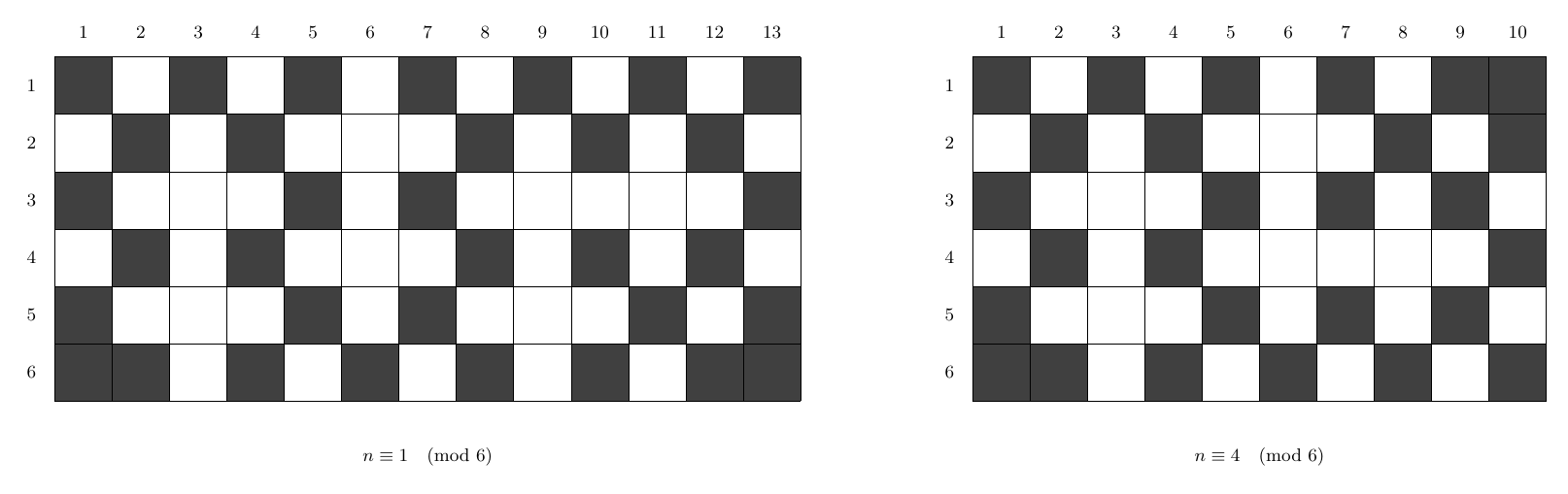}
    \caption{Minimum percolating sets in the \mbox{$6\times13$} and \mbox{$6\times10$} grids, as per Theorem \ref{width6}.}\label{fig:6by1}
\end{figure}

For the first $n-3$ columns, in columns $1,5\pmod{6}$, infect the cells in the odd rows; in columns $2,4\pmod{6}$, infect the cells in the even rows; in columns $3\pmod{6}$, infect only the first cell; and in columns $0\pmod{6}$, infect only the last cell. Additionally infect $(6,1)$. When $n\equiv1\pmod{6}$, the infected cells in the last three columns are $(1,n-2),(5,n-2),(2,n-1),(4,n-1),(6,n-1),(1,n),(3,n),(5,n)$, and $(6,n)$. When $n\equiv 4\pmod{6}$, the infected cells in the last three columns are $(2,n-2),(6,n-2),(1,n-1),(3,n-1),(5,n-1),(1,n),(2,n),(4,n)$, and $(6,n)$. In either case $\pmod{6}$, the total number of infected cells is $1+7(\frac{n-4}{3})+3+9=\frac{7n+11}{3}$, as desired and it suffices to check that every cell becomes infected.

Within the first $n-5$ columns, any initially uninfected cells in columns $1,2,4,5\pmod{6}$ become infected immediately, as do the initially uninfected cells in columns $n-1,n$. This in turn infects the remaining uninfected cells in the first $n-5$ columns. It is then easy to verify that any uninfected cells in the remaining three columns eventually become infected. 

To prove the lower bound of $LB+1$ in the remaining cases, we revisit the proof of Theorem~\ref{thm:rectlower} to note that if there are at least $c$ pairs of adjacent infected vertices, then $|S_0|\ge\frac{7n+6+c}{3}$. However, there is a further way to increase the lower bound, by considering cells that already have four infected neighbors by the time they become infected. These include all the initially uninfected cells with four initially infected neighbors. Let $d$ be the number of initially uninfected cells with four initially infected neighbors. Whenever such a cell becomes infected, $P(S_0)$ decreases by $4$ instead of $2$, so we get in general, the inequality $P(S_0)-2(mn-|S_0|)-b-2d\ge 0$. Ultimately, this leads to a lower bound of $s_3(6,n)\ge\frac{7n+6+c+d}{3}$.

When $n\equiv 1\pmod{6}$, $c+d\ge 3$ is enough to give a lower bound of $\lceil{\frac{7n+9}{3}\rceil}=\lceil{\frac{7n+11}{3}\rceil}$. When $n\equiv 3\pmod{6}$, $c+d\ge 4$ is enough to give a lower bound of $\lceil{\frac{7n+10}{3}\rceil}=\lceil{\frac{7n+11}{3}\rceil}$. In the first and last columns, which have length six, there is necessarily at least one pair of adjacent infected cells. For $n\equiv 1\pmod{6}$, $n>7$, we must show that there is at least one more contribution to $c+d$ (either a pair of adjacent infected cells, or an uninfected cell surrounded on all four sides by infected cells), while for $n\equiv 3\pmod{6}, n>15$, we must show there are at least two more contributions to $c+d$, either by finding another pair of adjacent infected boundary cells or another cell which becomes infected by all four of its neighbors. First note that in order for all cells to become infected, all cells with degree less than $3$, i.e., all corners must be initially infected. If either the first or last row, which has odd length, contains some pair of adjacent infected cells, it would contain a second such pair, so we may assume that neither the first or last row has a pair of adjacent infected cells. It is now sufficient to show that the first eight columns and last eight columns each necessarily contain another pair of adjacent infected cells or an uninfected cell surrounded by four infected cells. For $n\equiv 3\pmod{6}$, we have $n\ge 16$, so these will necessarily be distinct occurrences in the cases where two such occurrences are necessary.

We focus our attention on the first eight columns, with an identical argument possible for the last eight columns. Assume for the sake of contradiction that aside from one pair in the first column, there are no pairs of adjacent infected cells. By earlier reasoning, rows $1,6$ have infected cells in columns $1,3,5,7$ but not columns $2,4,6,8$. Without loss of generality, the pair of adjacent infected cells in the first column is either in rows $1,2$ or rows $3,4$.

First suppose the pair of adjacent infected cells in the first column is in rows $1,2$. Thus $(3,1)$ and $(5,1)$ are uninfected. In order to percolate, there are no two consecutive uninfected cells along the boundary, so $(4,1)$ is infected. Avoiding further pairs of adjacent infected cells, we get that $(2,2), (2,3), (4,2)$, and $(5,3)$ are uninfected. In order to percolate, there is no path between two boundary cells consisting entirely of uninfected cells, so $(2,4), (3,2)$, and $(5,2)$ are infected. In turn, this means $(3,3)$ and $(3,4)$ are uninfected. To avoid an uninfected path from $(1,2)$ to $(6,4)$, at least one of $(4,3)$ and $(5,4)$ is infected, and at least one of $(4,4)$ and $(5,4)$ is infected. Since $(3,3)$ and $(3,4)$ are adjacent, they cannot both be infected, necessitating that $(5,4)$ is infected. Consequently, $(4,4)$ is uninfected. To prevent the $2\times 2$ square in rows $3,4$ and columns $3,4$ from being totally uninfected, it is necessary that $(3,4)$ is infected, a contradiction.

Now suppose that the pair of adjacent infected cells in the first column is in rows $3,4$. Assuming no more pairs of adjacent infected cells, we have that cells in columns $1,3,5,7$ of rows $2,5$ are uninfected and that $(3,2)$ and $(4,2)$ are uninfected. To avoid uninfected paths from $(1,2)$ to $(2,1)$ and from $(5,1)$ to $(6,2)$, we must have that $(2,2)$ and $(5,2)$ are infected.

To prevent the $2\times 2$ square in rows $3,4$ and columns $2,3$ from being totally uninfected, either $(3,3)$ or $(4,3)$ is infected. The partial infection pattern determined so far is horizontally symmetric, so without loss of generality, we may assume that $(3,3)$ is infected, which in turn means $(3,4)$ and $(4,3)$ are both uninfected. To prevent $(2,3)$ from being an uninfected cell surrounded by four infected cells, $(2,4)$ must be uninfected. Thus, to avoid a $2\times 2$ square of uninfected cells in rows $2,3$ and columns $4,5$, we have that $(3,5)$ is infected. Thus, $(4,5)$ and $(3,6)$ are uninfected. To avoid an uninfected path from $(1,4)$ to $(1,6)$, we have that $(2,6)$ is infected.

The adjacent cells $(4,4)$ and $(5,4)$ cannot both be infected. Regardless of which is uninfected, there is a path of uninfected cells from either $(1,4)$ or $(6,4)$ to $(5,5)$. To avoid extending this path all the way to the boundary cell in $(6,6)$, we have that $(5,6)$ is necessarily infected which forces $(4,6)$ to be uninfected.

Given the previous assignments which were required to avoid producing another pair of adjacent infected cells or an uninfected cell surrounded by four infected cells, we will show that one or the other occurs within columns $6,7,8$. To prevent a totally uninfected $2\times 2$ square in rows $3,4$ and columns $6,7$, either $(3,7)$ or $(4,7)$ is infected. Since the partial infection pattern determined so far for columns $6,7,8$ is horizontally symmetric, we may assume without loss of generality that $(3,7)$ is infected, meaning $(4,7)$ and $(3,8)$ are uninfected.

Now to avoid an uninfected path from either $(1,4)$ or $(6,4)$ to $(6,8)$, we have that $(5,8)$ is infected, meaning $(4,8)$ is uninfected. If $(2,8)$ is uninfected, there is now an uninfected path from either $(1,4)$ or $(6,4)$ to $(1,8)$. Therefore, $(2,8)$ is infected. Thus $(2,7)$ is an uninfected cell surrounded by four infected cells. Therefore, within the first eight columns, there is necessarily an uninfected cell surrounded by four infected cells or a second pair of adjacent infected cells within, as desired.

\end{proof}

We now begin our proof of Theorem~\ref{thm:rectall}. The next several theorems together find a matching upper bound to the lower bound values whenever $m,n\ge 7$. We introduce a few general tools and then resolve the remaining cases of Theorem~\ref{thm:rectall} by dividing into cases dependent on the values of $m,n\pmod{6}$.

We begin with a tool inspired by the proof of Theorem 1.8 in \cite{DNR23} that we will make use of in several cases. 

\begin{figure}[H]
    \centering
    \includegraphics[scale=0.6]{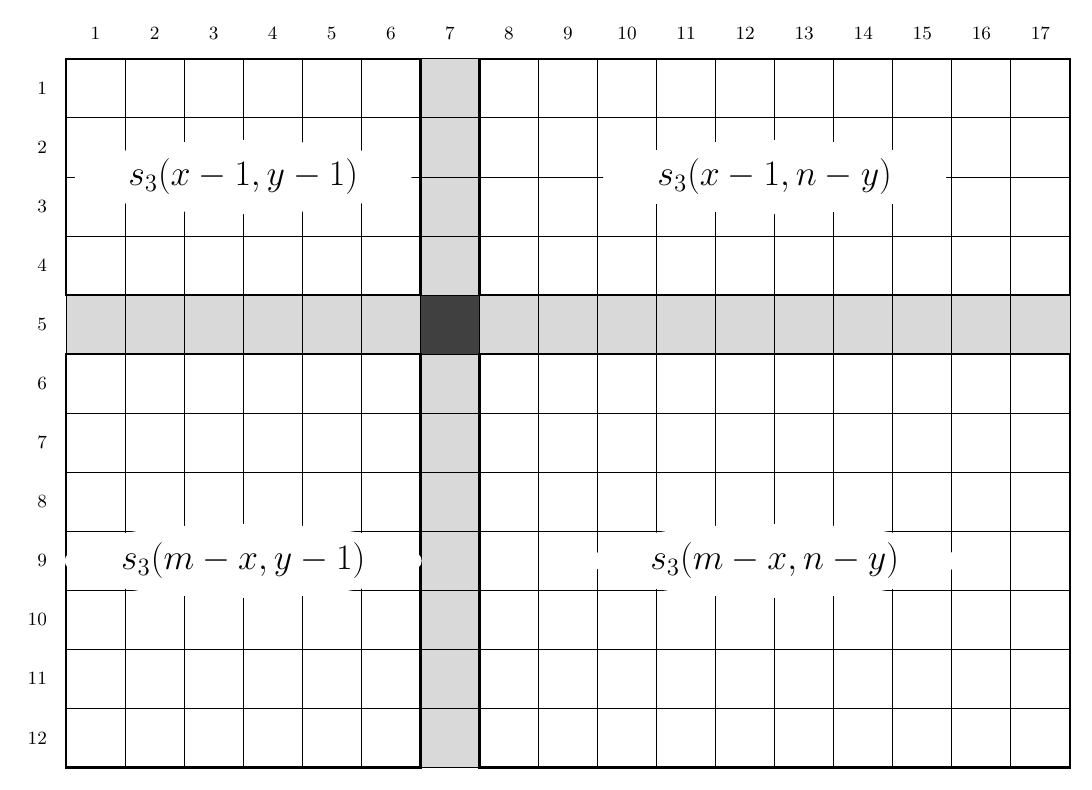}
    \caption{The Four Rectangle Trick, Theorem \ref{4rt}.}\label{fig:4rect}
\end{figure}

\begin{theorem}[Four Rectangle Trick]\label{4rt}
    For $2\le x\le m-1, 2\le y \le n-1$,
    \[
    s_3(m,n)\le 1+s_3(x-1,y-1)+s_3(m-x,y-1)+s_3(x-1,n-y)+s_3(m-x,n-y).
    \]
\end{theorem}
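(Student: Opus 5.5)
Our construction removes row $x$ and column $y$, leaving four rectangular blocks, and infects the single "center" cell $(x,y)$ where the deleted row and column cross. The blocks are
\begin{itemize}
\item rows $1,\dots,x-1$ and columns $1,\dots,y-1$, of size $(x-1)\times(y-1)$;
\item rows $x+1,\dots,m$ and columns $1,\dots,y-1$, of size $(m-x)\times(y-1)$;
\item the two blocks in columns $y+1,\dots,n$ above and below row $x$, of sizes $(x-1)\times(n-y)$ and $(m-x)\times(n-y)$.
\end{itemize}
The bounds $2\le x\le m-1$ and $2\le y\le n-1$ make all four blocks nonempty. In each block we place a minimum $3$-percolating set of that block's own grid graph, of size $s_3(\cdot,\cdot)$. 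Adding the center cell gives exactly the claimed count. We will show this set percolates in $P_m\square P_n$.

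The first step is a monotonicity observation. A block $B$ is an induced subgraph of $P_m\square P_n$, so each vertex of $B$ has at least as many infected neighbours in the big grid as it has inside $B$. Running the process inside $B$ alone, each time a vertex of $B$ gets infected it has at least $3$ infected neighbours in $B$. By induction on time, the same vertex is infected in the big grid no later. Hence after finitely many steps all four blocks are fully infected.

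The second step fills in the deleted row and column, which form a cross of cells with $(x,y)$ already infected. Take a cell $(x,j)$ in row $x$ with $j\ne y$. Its neighbours $(x-1,j)$ and $(x+1,j)$ lie in blocks, since $2\le x\le m-1$, and both are now infected. Its horizontal neighbours are $(x,j\pm1)$. We spread outward from the center. The cell $(x,y-1)$ has infected neighbours above, below, and $(x,y)$, so it becomes infected. Then $(x,y-2)$ becomes infected in the same way, and so on down to $(x,1)$. The same argument works toward column $n$, and symmetrically along column $y$ using $2\le y\le n-1$.

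The only subtle point is that the block structure must supply both vertical neighbours of every non-center cross cell. This is exactly what the hypotheses on $x$ and $y$ guarantee. Edge cases such as a block of width $1$ are harmless, because $s_3(1,k)=k$ is the whole row and the monotonicity argument is unchanged. So everything percolates, which gives the inequality. There is no real obstacle beyond noting that percolation within an induced subgrid persists in the larger grid.
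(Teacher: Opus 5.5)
Your proof is correct and follows the paper's argument essentially verbatim: you take percolating sets in the four corner blocks plus the center cell $(x,y)$, let each block fill in on its own, and then infect the four arms of the cross by cascading outward from $(x,y)$. Your explicit remark that percolation inside an induced subgrid persists in the larger grid is a detail the paper leaves implicit, but the route is the same.
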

\begin{proof}
    As illustrated in Figure \ref{fig:4rect}, consider the following intersection pattern. In the upper left $(x-1)\times (y-1)$ rectangle, infect $s_3(x-1,y-1)$ cells in a way that percolates. In the lower left $(m-x)\times(y-1)$ rectangle, infect $s_3(m-x,y-1)$ cells in a way that percolates. In the upper right $(x-1)\times(n-y)$ rectangle, infect $s_3(x-1,n-y)$ cells in a way that percolates. In the lower right $(m-x)\times(n-y)$ rectangle, infect $s_3(m-x,n-y)$ cells in a way that percolates. Lastly infect $(x,y)$. This is a set of $1+s_3(x-1,y-1)+s_3(m-x,y-1)+s_3(x-1,n-y)+s_3(m-x,n-y)$ infected cells.

    Treating the four corner rectangles separately, every cell outside of row $x$ and column $y$ becomes infected. Additionally, since $(x,y)$ is already infected, the remaining uninfected cells form four disjoint paths. In each, one endpoint of the path is a neighbor of $(x,y)$ and thus becomes infected. Now the next cell along the path has three infected neighbors and becomes infected. For each of the four paths, the process cascades until reaching the boundary of the rectangle, at which point the entire grid is infected.
\end{proof}

\begin{corollary}\label{cor:rect3}
    For $m,n\ge 7$, $s_3(m,n)=\frac{mn+m+n}{3}+1$ when $m\equiv n\equiv 3\pmod{6}$ but either $m\neq n$ or $m+1$ is not a power of $2$.
\end{corollary}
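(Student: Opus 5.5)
The lower bound is immediate from results already stated. The upper bound will come from the Four Rectangle Trick (Theorem~\ref{4rt}), with the splitting point chosen so that the total rounding loss from the four pieces is exactly one.

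\textbf{Lower bound.} Write $m=6a+3$ and $n=6b+3$. Then $mn+m+n\equiv 9+3+3\equiv 0\pmod 3$, so $\frac{mn+m+n}{3}$ is an integer. Since $m,n$ are odd, Theorem~\ref{thm:rectlower} gives $s_3(m,n)\ge \frac{mn+m+n}{3}$. By Theorem~\ref{rectangleC0}, equality holds only when $m=n=2^k-1$, which the hypotheses exclude. As $s_3$ is an integer, $s_3(m,n)\ge \frac{mn+m+n}{3}+1$.

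\textbf{Upper bound: the key identity.} For $x,y$ as in Theorem~\ref{4rt}, set $k=x-1$, $K=m-x$, $l=y-1$, $L=n-y$, so that $k+K=m-1$ and $l+L=n-1$. Summing the quantity $pq+p+q$ over the four pieces gives
\[
(m-1)(n-1)+2(m-1)+2(n-1)=mn+m+n-3 .
\]
Hence if each piece has value $\frac{pq+p+q+\varepsilon}{3}$, Theorem~\ref{4rt} yields
\[
s_3(m,n)\le \frac{mn+m+n}{3}+\frac{\sum\varepsilon}{3}.
\]
So I need a split whose total excess satisfies $\sum\varepsilon\le 3$. Two constraints guide the choice. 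Pieces with an even side cost at least $+2$ each through the $2a$ term in $LB$, so all four pieces should have odd dimensions; this forces $k,K,l,L$ to be odd with $k\not\equiv K \pmod 3$. Pieces with a side $\equiv 2\pmod 3$ cost only their ceiling loss, by Theorem~\ref{thm:oldcases}.

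\textbf{Upper bound: choice of split.} My first attempt is $k=l=5$ and $K=m-6$, $L=n-6$, which are $\equiv 3\pmod 6$.
\begin{itemize}[leftmargin=2em]
\item The $5\times5$ piece has value $12=\frac{35+1}{3}$, so excess $1$.
\item The $5\times(n-6)$ piece has value $2n-10$, excess $1$; by symmetry the $(m-6)\times 5$ piece also has excess $1$.
\end{itemize}
These three pieces already use up the total excess of $3$. The remaining $(m-6)\times(n-6)$ piece would then have to be exactly $\frac{pq+p+q}{3}$, which is false except in the Mersenne case.

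This is the main obstacle: a single split need not balance the rounding. I would try, in order:
\begin{enumerate}[leftmargin=2em]
\item choose $K=L=2^j-1$ (e.g.\ $7$) so that one piece is exact by Theorem~\ref{rectangleC0}, and check whether the remaining three pieces can be arranged to lose only $1$ in total;
\item failing that, replace the Four Rectangle Trick by the general $LB+1$ construction of \cite[Lemma 4.5]{DNR23}, already invoked for $6\times n$ with $n\equiv 3\pmod 6$ in Theorem~\ref{width6}, in the same way Corollary~\ref{cor:rect1} uses \cite[Lemma 4.6]{DNR23}.
\end{enumerate}
That lemma presumably supplies percolating sets of size $\frac{mn+m+n}{3}+1$ for all $m\equiv n\equiv 3\pmod 6$, which together with the lower bound finishes the proof. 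Any small cases with $m,n\ge 7$ not covered by the chosen split would be handled by explicit constructions.
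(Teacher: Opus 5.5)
Your lower bound is correct and matches the paper's argument. The gap is the upper bound: the proposal never exhibits a percolating set of size $\frac{mn+m+n}{3}+1$.

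Your split $k=l=5$ balances only when the piece $(m-6)\times(n-6)$ is exact, i.e.\ $m-6=n-6=2^j-1$ (e.g.\ $m=n=9$). Option~1 with $K=L=7$ fails in general. It forces $k=m-8\equiv 1$ and $l=n-8\equiv 1 \pmod 6$, so $8k+7\equiv 0\pmod 3$. By Theorem~\ref{rectangleC0} the piece $k\times 7$ then has excess at least $3$ unless $k=7$, and likewise for $7\times l$. So the total excess is at most $3$ only for $m=n=15$, which is excluded. (Also, once one piece is exact, the other three may lose $3$ in total, not $1$.) Option~2 is only a guess about the content of \cite[Lemma 4.5]{DNR23}. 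The paper uses that lemma only for $6\times n$ grids, so nothing supports invoking it here.

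For comparison, the paper's upper bound is Theorem~\ref{4rt} with $x=y=4$. In your notation this is $k=l=3$, $K=m-4$, $L=n-4\equiv 5\pmod 6$: the $3\times 3$ corner is exact ($s_3(3,3)=5$), and each other piece is charged excess $1$ via Theorem~\ref{thm:oldcases}. That is your Mersenne-corner idea with $2^j-1=3$ instead of $7$.

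Be aware, though, that two of those pieces are $3\times N$ strips with $N\in\{m-4,n-4\}$. Theorem~\ref{thm:oldcases} also gives $s_3(3,N)=\frac{3N+1}{2}$, whose excess $\frac{N-3}{2}$ equals $1$ only for $N=5$. So this split actually yields $\frac{mn+m+n}{3}+\frac{m+n}{6}-2$, which hits the target only for $m=n=9$. For $9\times 15$ it gives $1+5+8+17+24=55$, not $54$. The paper's displayed value $s_3(m-4,3)=\frac{4(m-3)}{3}$ is right only when $m=9$.

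Within your bookkeeping, the repair is to reverse the roles in your first attempt. Fix the Mersenne corner $k=l=15$ and let $K=m-16$, $L=n-16\equiv 5\pmod 6$ vary. The excesses are then $0+1+1+1$, using Theorem~\ref{thm:oldcases} for the pieces with a side $\equiv 5\pmod 6$, all of whose sides are at least $5$. This gives the bound for all $m,n\ge 21$.

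The cases with $\min\{m,n\}\in\{9,15\}$, other than $9\times 9$ (and the excluded $15\times 15$), are not reached by any of these splits. They still need a direct construction, for instance one that grows by six-row blocks of cost $2n+2$ as in Lemma~\ref{lem:AorA'}. That cost is exactly the increase of $\frac{mn+m+n}{3}$ when $m$ increases by $6$.
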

\begin{proof}
    Since $m,n$ are both odd, $LB=\frac{mn+m+n}{3}$, but by Theorem~\ref{rectangleC0}, this bound is only tight when $m=n=2^k-1$ for some integer $k$. This establishes a lower bound of $\frac{mn+m+n}{3}+1$. For a corresponding upper bound, we apply Theorem~\ref{4rt} with $x=y=4$. Then,
    \begin{align*}
        s_3(m,n)\le 1+s_3(3,3)+s_3(m-4,3)+s_3(3,n-4)+s_3(m-4,n-4).
    \end{align*}

    Note from Theorem~\ref{rectangleC0} that $s_3(3,3)=5$. Both $m-4,n-4$ are $5\pmod{6}$, and from Theorem~\ref{thm:oldcases}, $s_3(a,b)=\frac{ab+a+b+1}{3}$ when $a,b\equiv 5\pmod{6}$. Thus,
    \begin{align*}
        s_3(m,n)&\le 1+5+\frac{(m-3)(4)}{3}+\frac{4(n-3)}{3}+\frac{(m-3)(n-3)}{3}\\
        &=6+\frac{(m+1)(n+1)-16}{3}=\frac{mn+m+n}{3}+1.\\
    \end{align*}
\end{proof}

For even $n\ge 8$, let $B$ be the following intersection pattern for a $6\times n$ rectangular grid. In rows 1 and 5, the cells in odd columns are infected; for rows 2 and 4, the cells in even columns are infected; in row 3, only the first cell is infected; in row 6, only the last cell is infected. For an example of $B$ with $n=12$, see Figure~\ref{figure:B}.

\begin{figure}[H]
    \centering
    \includegraphics[scale=0.6]{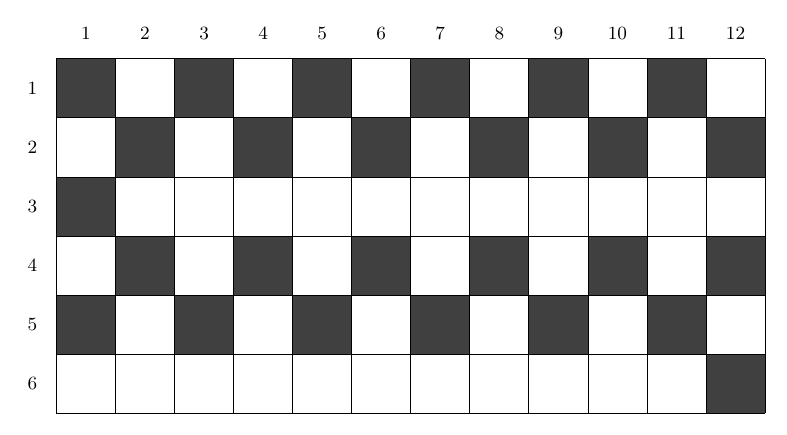}
    \caption{The block $B$ from Lemma \ref{lem:repeatB}.}\label{figure:B}
\end{figure}

\begin{lemma}\label{lem:repeatB}
    Consider a $6b\times n$ grid where the infection pattern is $b$ copies of $B$ stacked on top of each other, except that the last cell in the first row is also infected. Then every cell outside of the last row becomes infected.
\end{lemma}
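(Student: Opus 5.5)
The approach is to exploit the fact that, apart from row $6$, each copy of $B$ fills itself in without any help from the other copies. The only links between consecutive copies are through row $6$ of the upper copy and row $1$ of the copy below it. I will index rows within a copy by $1,\dots,6$; write $r$ for row $r$ of a given copy, and $r'$ for rows of the copy directly below. I will first show that rows $1$ through $5$ of every copy become infected. I will then show that row $6$ of every copy except the last becomes infected, using the now fully infected row $1$ of the next copy.

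\emph{Row $1$.} In row $1$ the odd columns are infected. Take an even column $j<n$. The cell $(1,j)$ has infected neighbours $(1,j-1)$, $(1,j+1)$ and $(2,j)$, the last because row $2$ is infected in even columns. So it becomes infected. For $j=n$, the neighbours of $(1,n)$ are $(1,n-1)$, $(2,n)$, and the cell directly above.
\begin{itemize}
\item In the top copy this last cell does not exist, but $(1,n)$ is infected by hypothesis.
\item In any later copy the cell above is $(6,n)$ of the previous copy, which is initially infected. So $(1,n)$ has three infected neighbours.
\end{itemize}
Either way, row $1$ of every copy becomes fully infected.

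\emph{Rows $2$, $4$ and $5$.} Take an odd column $j>1$ in row $2$. The cell $(2,j)$ sees $(1,j)$, $(2,j-1)$ and $(2,j+1)$; here $j+1\le n$ because $n$ is even. The cell $(2,1)$ sees $(1,1)$, $(2,2)$ and $(3,1)$. So row $2$ fills. Row $4$ fills the same way, using row $5$ in place of row $1$; for $(4,1)$ the third neighbour is $(3,1)$. Next, take an even column $j<n$ in row $5$. The cell $(5,j)$ sees $(4,j)$, $(5,j-1)$ and $(5,j+1)$. The cell $(5,n)$ sees $(5,n-1)$, $(4,n)$ and $(6,n)$, the last being initially infected. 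So row $5$ fills.

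\emph{Row $3$.} Every cell $(3,j)$ now has $(2,j)$ and $(4,j)$ infected. The cell $(3,1)$ is initially infected, so the infection cascades rightward along row $3$: each $(3,j)$ gains the third neighbour $(3,j-1)$.

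\emph{Row $6$.} Consider a copy that is not the last. Each $(6,j)$ has infected neighbours $(5,j)$ and $(1',j)$, the latter from row $1$ of the next copy. The cell $(6,n)$ is initially infected, so the infection cascades leftward along row $6$: each $(6,j)$ gains the third neighbour $(6,j+1)$.

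This infects every cell except those in row $6$ of the last copy, which is the last row of the grid, as the lemma claims. The main thing to watch is the boundary cells at columns $1$ and $n$. The argument relies on $n$ being even, so that the last column is even, and on the infected cells $(3,1)$ and $(6,n)$ together with the extra cell $(1,n)$ in the top copy. These supply the third infected neighbour exactly where a grid neighbour is missing.
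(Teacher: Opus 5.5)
Your proof is correct and follows the paper's argument. Rows $1,2,4,5$ of each copy fill at once, with $(1,n)$ supplied by the hypothesis in the top copy and by $(6,n)$ of the copy above otherwise; row $3$ then cascades rightward from $(3,1)$, and row $6$ of each non-final copy cascades leftward from $(6,n)$ using the filled row $1$ of the next copy — you simply check the boundary cells more explicitly than the paper does.
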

\begin{proof}
    Within each copy of $B$, every uninfected cell in rows 1,2,4,5 except the last cell in row 1 initially has three infected neighbors so becomes infected. Now that rows 2 and 4 are entirely infected, cells in row 3 get infected one column at a time, cascading from column 2 to column $n$. For every copy of $B$ except the top one (where it is already infected), the last cell in the top row has three infected neighbors including the cell above it, so it becomes infected. Now that the top row of each copy of $B$ is infected, consider the bottom row of any copy except the last one. As the entire row below it is infected, one cell gets infected at a time, cascading from column $n-1$ down to column $1$.
\end{proof}

\begin{theorem}\label{thm:rectangle4even}
Suppose $m,n\ge 7$ and $m\equiv 4\pmod{6}$.
    If $n\equiv 0\pmod{6}$, then
    \[
    s_3(m,n)\le\frac{mn+m+n+5}{3}.
    \]
    If $n\equiv 4\pmod{6}$, then
     \[
    s_3(m,n)\le\frac{mn+m+n}{3}+2.
    \]
\end{theorem}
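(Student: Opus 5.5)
The plan is a direct construction: stack copies of the block $B$ from Lemma~\ref{lem:repeatB} and finish with a specially designed bottom strip. Write $m=6b+4$ with $b\ge 1$; since $n$ is even, $B$ is defined. In rows $1,\dots,6b$ I would place $b$ stacked copies of $B$, and also infect the last cell of the first row. By Lemma~\ref{lem:repeatB}, every cell in rows $1,\dots,6b-1$ then becomes infected. Row $6b$ is left with only its last cell infected.

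Counting, one copy of $B$ has $n/2$ infected cells in each of rows $1,2,4,5$ and one each in rows $3,6$, so $2n+2$ cells. The stack therefore costs
\[
b(2n+2)+1=\frac{(m-4)(n+1)}{3}+1 .
\]
Compared with the target bound, this leaves a budget for the last rows of
\[
\frac{mn+m+n+5}{3}-\frac{(m-4)(n+1)}{3}-1=\frac{5n+6}{3}
\]
when $n\equiv 0\pmod 6$, and $\frac{5n+7}{3}$ when $n\equiv 4\pmod 6$.

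The first attempt would be to place a percolating set of size $s_3(4,n)$ (Theorem~\ref{width4}) in rows $6b+1,\dots,m$. Once those rows are infected, row $6b$ cascades leftward from column $n-1$, exactly as in the proof of Lemma~\ref{lem:repeatB}. However, $s_3(4,n)$ exceeds this budget by about $2$. So the bottom strip must exploit its environment: every cell of row $6b$ has an infected neighbor above it, and row $6b$ already contains one infected cell. The strip therefore only needs to percolate in a $5\times n$ grid (rows $6b,\dots,m$) whose top row has a one-sided ``free'' infected boundary. Equivalently, the task is to find a $4\times n$ configuration that, together with a fully infected row $6b$, infects everything. This is a weaker requirement than percolating on its own, because the top row of the strip now needs only two internal infected neighbors.

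For that strip, I would use an alternating pattern in the style of the rows of $B$. Row $m$ gets the odd columns plus the corner $(m,n)$, since all corners must be initially infected. Rows $m-1$ and $m-3$ get staggered cells, and row $m-2$ gets only sparse cells, using period-$6$ motifs like those in the $6\times n$ constructions of Theorem~\ref{width6}. Separate terminal patterns for the last few columns would handle $n\equiv 0$ versus $n\equiv 4\pmod 6$. I would then check the count against the budgets $\frac{5n+6}{3}$ and $\frac{5n+7}{3}$. Percolation would be verified in the order used in earlier proofs: first the cells with three initially infected neighbors, then column-by-column cascades along rows that are flanked by fully infected rows.

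As a fallback, if a uniform strip does not fit the budget, I would try Theorem~\ref{4rt}. Relative to $\frac{mn+m+n}{3}+1$, its excess is governed by how far each of the four pieces exceeds its own $\frac{ab+a+b}{3}$. Pieces with a side $\equiv 2\pmod 3$ are known exactly by Theorem~\ref{thm:oldcases}, and one would choose splits with the excesses summing to at most $2$ in the first case and $3$ in the second.

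The main obstacle is the design of the bottom strip, together with its closing pattern in the last few columns. This is where the two congruence classes of $n$ differ and where the extra $+5/3$ versus $+2$ arises.
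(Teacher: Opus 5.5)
\paragraph{Gap: the bottom strip is never constructed.} Your setup matches the paper's. You stack $(m-4)/6$ copies of $B$, add the cell $(1,n)$, use Lemma~\ref{lem:repeatB} for rows $1,\dots,m-5$, and leave a four-row bottom strip with budget $\frac{5n+6}{3}$ (resp.\ $\frac{5n+7}{3}$). These counts are right. But the step you call the main obstacle is the whole content of the theorem. You leave it as a description (``staggered cells'', ``period-$6$ motifs'') with no explicit pattern, no count, and no percolation check, so as it stands nothing is proved.

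The paper's strip is:
\begin{itemize}
\item row $m-3$: all odd columns;
\item row $m-2$: columns $\equiv 0,2\pmod 6$, plus $(m-2,n)$ when $n\equiv 4\pmod 6$;
\item row $m-1$: column $1$ and columns $\equiv 3,5\pmod 6$;
\item row $m$: column $1$ and all even columns.
\end{itemize}
This is exactly $\frac{5n+6}{3}$, resp.\ $\frac{5n+7}{3}$, cells.

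The key is that this strip omits its own top-right corner $(m-3,n)$. That corner is infected by $(m-4,n)$, $(m-3,n-1)$ and $(m-2,n)$, and this is the \emph{only} place outside help is used. After that the strip fills itself: first columns $\equiv 3,5$ of row $m-2$, then the even columns of row $m-1$, then rows $m$ and $m-3$, then columns $\equiv 1$ and $\equiv 4$. Only then does row $m-4$ cascade leftward from column $n-1$. This saves exactly one cell over a standalone percolating $4\times n$ set. Indeed, by Theorem~\ref{width4}, $s_3(4,n)$ exceeds your budget by exactly $1$, not ``about $2$''.

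\paragraph{The relaxed reformulation is not equivalent.} You rephrase the task as: the strip together with a \emph{fully infected} row $6b$ infects everything. This is not sufficient. Row $6b$ starts with only its last cell infected. For $j<n$, a cell $(6b,j)$ whose lower neighbor is still uninfected needs both horizontal neighbors already infected. So row $6b$ cannot be assumed infected in advance, and a strip whose top row relies on those cells can deadlock. The $5\times n$ formulation you gave first is the correct one.

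\paragraph{The fallback via Theorem~\ref{4rt} cannot work.} Since $m-1$ and $n-1$ are odd, every split has exactly one even part in each direction. So one piece has both sides even and two pieces have exactly one even side. By Theorem~\ref{thm:rectlower}, the total excess over $\sum\frac{ab+a+b}{3}$ is then at least $\frac43+\frac23+\frac23=\frac83$. The trick therefore yields at best $\frac{mn+m+n+8}{3}$. The targets allow excess only $\frac53$ and $2$, not the $2$ and $3$ you state.
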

\begin{proof}
To establish the upper bounds, consider the following infection pattern. In both cases, the first $m-4$ rows will consist of $(m-4)/6$ copies of $B$ stacked on top of each other, with the last cell in the first row additionally infected. In row $m-3$, all odd columns are infected. In row $m-2$, the cells in columns $0,2\pmod{6}$ are infected. In row $m-1$, the first cell is infected along with everything in columns $3,5\pmod{6}$. In row $m$, the first cell is infected along with all even columns. Additionally, when $n\equiv 4\pmod{6}$, the last cell in row $m-2$ is infected. See Figure~\ref{figure:40} for an example when $m=10, n=12$. 

\begin{figure}[H]
    \centering
    \includegraphics[scale=0.6]{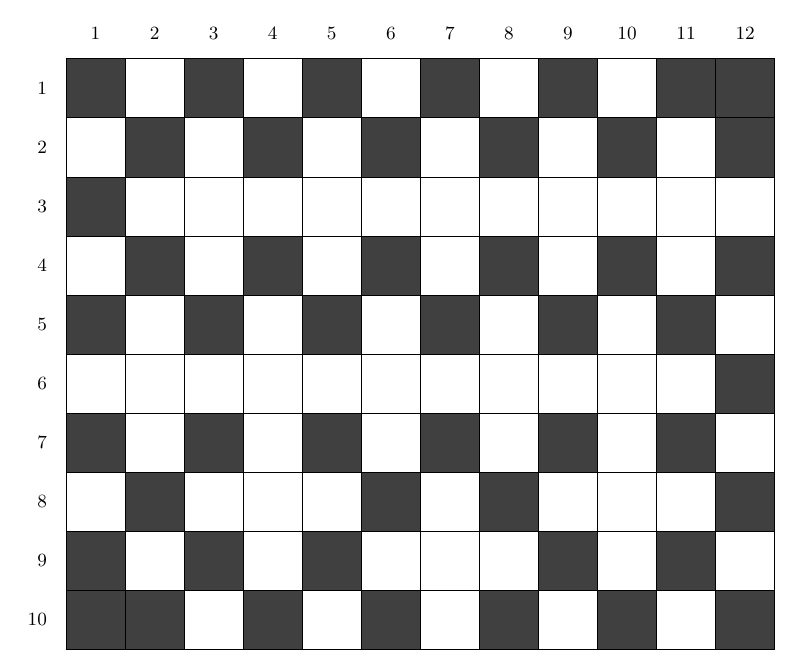}
    \caption{A minimum percolating set in the $10\times12$ grid, as per Theorem~\ref{thm:rectangle4even}.}\label{figure:40}
\end{figure}

For $n\equiv 0\pmod{6}$, the total number of infected cells is 
\[
(\frac{m-4}{6})(2n+2)+1+n/2+n/3+(n/3+1)+(n/2+1)=\frac{mn+m+n+5}{3}.
\]

For $n\equiv 4\pmod{6}$, the total number of infected cells is 
\[
(\frac{m-4}{6})(2n+2)+1+n/2+(n+2)/3+(n+2)/3+(n/2+1)=\frac{mn+m+n+6}{3}.
\]
It suffices to check that every cell becomes infected. From Lemma \ref{lem:repeatB}, we know that this is the case for the first $m-5$ rows.

In both cases, the last cell in the row $m-3$ has three infected neighbors so becomes infected. We will then show that the entire last four rows become infected. This will then be enough to infect the cells in row $m-4$ one at a time, cascading from column $n-1$ down to column 1.

In row $m-2$, any cell in columns $3,5\pmod{6}$ gets infected since the cells above and below are already infected while the cell to the left is infected for columns $3\pmod{6}$ and the cell to the right is infected for columns $5\pmod{6}$. In row $m-1$, all the cells in even columns become infected. For columns $2\pmod{6}$, this is from cells above, below, and to the right being infected. For columns $4\pmod{6}$, this is from columns to left, right, and below being infected (except for the last column with $n\equiv 4\pmod{6}$ where the cells above, below, and to the left are infected). For columns $0\pmod{6}$, this is from columns above, below, and to the left being infected.

Now every cell in the row $m$ in columns $3,5\pmod{6}$ and every cell in row $m-3$ in columns $2\pmod{6}$ has three infected neighbors and becomes infected. The uninfected cells in columns $0\pmod{6}$ of row $m-3$ (which when $n\equiv 0\pmod{6}$ do not include the last column as this is already infected) have infected cells to the left, right, and below, so become infected.
For columns $1\pmod{6}$, the cell in row $m-2$ has three infected neighbors and becomes infected. This is enough to infect the remaining uninfected cells below it, cascading down (except in the first column where there are no such remaining cells). This leaves just the cells in rows $m-3,m-2$ in columns $4\pmod{6}$. Such uninfected cells in row $m-2$ have infected neighbors to the left, right, and below, so become infected. Then such uninfected cells in row $m-3$ have infected neighbors to the left, right, and below, so become infected, completing the proof.

\end{proof}

For odd $n\ge5$, let $A$ be the following infection pattern for a $6\times n$ rectangular grid: in rows $1,3$, all odd cells are infected except column $3$; in row $2$, only columns $2,4$ are infected; in rows $4,6$, all even cells are infected; in row $5$, just the first and last cells are infected. For odd $n\ge 9$, let $A'$ be the following infection pattern for a $6\times n$ rectangular grid: in rows $1,3$, all odd cells are infected except column $7$; in row $2$, only columns $6,8$ are infected; in rows $4,6$, all even cells are infected; in row $5$, just the first and last cells are infected. For examples of $A$ and $A'$ with $n=11$, see Figures~\ref{figure:A} and \ref{figure:A'}, respectively. 
\begin{figure}[H]
    \centering
    \includegraphics[scale=0.6]{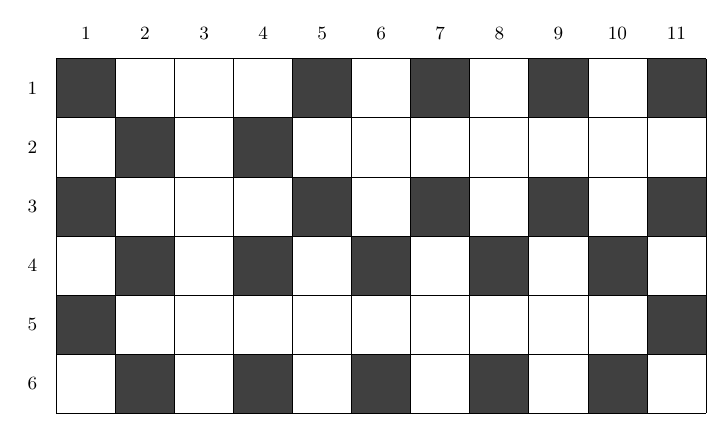}
    \caption{The repeating block $A$ from Lemma~\ref{lem:AorA'}.}\label{figure:A}
\end{figure}

\begin{figure}[H]
    \centering
    \includegraphics[scale=0.6]{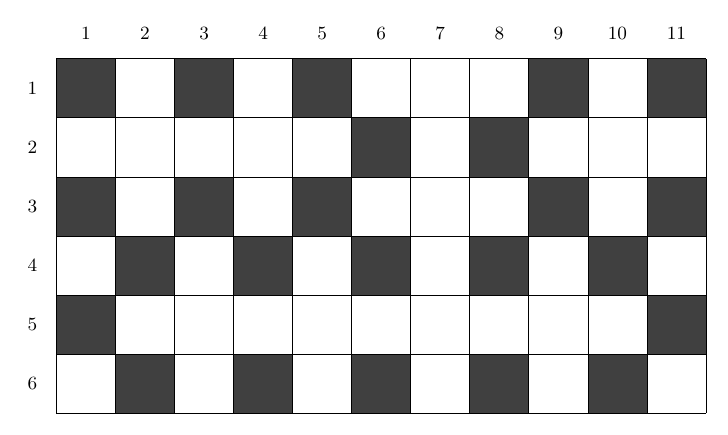}
    \caption{The repeating block $A'$  from Lemma~\ref{lem:AorA'}.}\label{figure:A'}
\end{figure}

\begin{lemma}\label{lem:AorA'}
    For $b\ge 1$, consider a $(6b+1)\times n$ grid where the infection pattern is $b$ copies of $A$ stacked on top of each other, along with precisely the odd cells in the bottom row infected. Then, every cell outside of the first two rows will become infected. The same holds for $A'$ instead of $A$.
\end{lemma}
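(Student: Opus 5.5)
The proof will be a bottom-up induction over the copies of $A$, checking only local neighbor counts. Index the copies $1,\dots,b$ from top to bottom, and call the extra last row of the grid ``row $7$'' of copy $b$. The inductive claim I would aim for is: once the row directly below copy $k$ is completely infected, rows $3$ through $6$ of copy $k$ become completely infected. The extra step needed to pass from copy $k$ up to copy $k-1$ is that, in addition, rows $1,2$ of copy $k$ become infected whenever $k\ge 2$. For $k=1$ rows $1,2$ are the first two rows of the grid, which the statement excludes.

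\emph{Step 1 (rows $6$ and $4$).} Assume the row below copy $k$ has all its odd cells infected; I will handle the missing column $3$ separately. Then every odd cell of row $6$ has both horizontal neighbors (even cells of row $6$) and the cell below infected. At the ends, column $1$ uses $(5,1)$ and column $n$ uses $(5,n)$, which are initially infected. In the same way, each odd cell of row $4$ other than column $3$ has $(4,j\pm1)$ and $(3,j)$ infected. At the ends, columns $1$ and $n$ use $(5,1)$ and $(5,n)$.

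\emph{Step 2 (row $5$, then the rest).} Every cell of row $5$ now has its lower neighbor infected, and its upper neighbor infected except in column $3$. So row $5$ fills in by two cascades: one rightward from $(5,1)$, reaching $(5,2)$, and one leftward from $(5,n)$ down to column $4$. After that, $(5,3)$ has three infected neighbors. Next $(4,3)$ is infected by $(4,2)$, $(4,4)$ and $(5,3)$. Then row $3$ fills: its even cells see the cell below and two odd horizontal neighbors, with $(3,2)$ and $(3,4)$ using $(2,2)$ and $(2,4)$ instead. Finally $(3,3)$ sees $(3,2)$, $(3,4)$ and $(4,3)$.

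\emph{Step 3 (rows $1,2$ of copy $k\ge 2$, and closing the gap in column $3$).} Row $2$ now has row $3$ full beneath it, together with $(2,2)$ and $(2,4)$. The even cells of row $1$ of copy $k$ get infected from the even cells of row $6$ of copy $k-1$ above them, plus their odd horizontal neighbors. Near column $3$ the cells $(1,2)$ and $(1,4)$ use $(2,2)$ and $(2,4)$ instead. After this, every cell of row $2$ has its upper neighbor infected (except column $3$) and its lower neighbor infected, so row $2$ cascades from both ends. Then $(1,3)$ gets infected from $(1,2)$, $(1,4)$ and $(2,3)$. At that point row $1$ of copy $k$ is entirely infected.

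In particular $(6,3)$ of copy $k-1$ now has three infected neighbors. So the hypothesis of Step 1 holds for copy $k-1$ in full strength, and the induction continues. For copy $b$ the row below is the extra row, whose odd cells are infected; its even cells get infected once row $6$ of copy $b$ is full.

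For $A'$ the argument is identical with the exceptional column $3$ (and helper columns $2,4$) replaced by column $7$ (and helpers $6,8$). The hypothesis $n\ge 9$ guarantees that the two row-$5$ cascades and the end columns $1,n$ do not interfere with the exceptional column.

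The main obstacle is the bookkeeping around the exceptional column. In particular, I must check that Step 1 for copy $k-1$ genuinely only needs the odd cells of the row below except column $3$, with $(6,3)$ recovered only after Step 3 of copy $k$. Otherwise the order of the cascades has to be rearranged.
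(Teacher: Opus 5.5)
Your proof is correct and is essentially the paper's argument. Both use the same local three-neighbor checks row by row, with the exceptional column ($3$ for $A$, $7$ for $A'$) filled by a bottom-up chain; the paper runs this as one vertical cascade from $(m,3)$ after treating all copies in parallel, while you interleave it copy by copy through your induction. The only slip is cosmetic: row $2$ fills by spreading outward from its seeds ($(2,2),(2,4)$, resp.\ $(2,6),(2,8)$), not ``from both ends,'' since an end cell such as $(2,n)$ still needs $(2,n-1)$ before it has three infected neighbors.
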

\begin{proof}
    Within each copy of $A$, every uninfected cell in rows $3,4$ except for column $3$ has at least three infected neighbors and thus becomes infected. Within each copy of $A$, the row below row $6$ (either row $1$ of the copy of $A$ below, or the last row of the entire grid) has all its odd cells infected. This is enough for row $6$ to become infected except for column $3$.
    With the exception of the top copy of $A$, row $1$ of $A$ is a below a row (row $6$ of the copy of $A$ above) that has all cells infected except column $3$. This is enough for row $1$ to become infected except for column $3$.
    Note that since all even columns in row $6$ of the last copy of $A$ are infected, the entire bottom row of the whole grid becomes infected. Additionally the cell in row $5$, column $2$ and the cell in row $2$, column $1$ of each copy of $A$ each have three infected neighbors and become infected.

    At this point, in the bottom three rows of each copy of $A$, the only uninfected cells are in column $3$ and in columns $4$ to $n-1$ of row $5$. Beginning with the cell in row $5$, column $n-1$ and cascading to the left, everything in row $5$ gets infected except column $3$.
    For any copy of $A$ besides the top one, in the top three rows, the only uninfected cells are in column $3$ and in columns $5$ to $n$ of column $2$. Beginning with the cell in row $2$, column $5$ and cascading to the right, everything in row $2$ gets infected except column $3$.

    Now with the exception of some cells in the top two rows of the entire grid, everything is infected except column $3$. Since $(m,3)$ is already infected, beginning from the cell in row $6$, column $3$ of the bottom copy of $A$ and cascading upwards, the entire column $3$ gets infected except for the top row of the entire grid.

By an analogous argument, the same conclusion holds for $A'$ in place of $A$.
    
\end{proof}

In each of the following proofs of Theorems~\ref{thm:rect01} to~\ref{thm:rect43}, there is one value for the number of rows $m$ which corresponds to having no copies of the block $A$ or $A'$. In these cases, the proof does not work exactly as written and will be addressed in the remark after Theorem~\ref{thm:rect43}. While $A$ and $A'$ retain the same meaning throughout, the individual proofs will make use of a repeating block $C$ which is redefined within each proof. Additionally, we will at times refer to a set of disjoint paths. By this, we mean a collection of vertex-disjoint paths with no edges between different paths (this is slightly stronger than just requiring the paths to not share cells).

\begin{theorem}\label{thm:rect01}
    If $m,n\ge 7$, $m\equiv 0\pmod{6}$, and $n\equiv 1\pmod{6}$, then \[
    s_3(m,n)\le\frac{mn+m+n+2}{3}.
    \]
\end{theorem}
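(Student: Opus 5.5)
We prove the bound by an explicit construction. Write $m=6b+6$ with $b\ge 0$ and $n\equiv 1\pmod 6$. The target count is $\frac{mn+m+n+2}{3}=\frac{(6b+6)n+6b+6+n+2}{3}$. Each copy of $A$ (or $A'$) uses exactly $2n+2$ infected cells: rows $1,3$ contribute $\frac{n+1}{2}-1$ each, row $2$ contributes $2$, rows $4,6$ contribute $\frac{n-1}{2}$ each, and row $5$ contributes $2$, for a total of $2n+2$. Stacking $b$ copies therefore costs $b(2n+2)=\frac{6b\,n+6b}{3}$. That leaves a budget of $\frac{6n+n+8}{3}=\frac{7n+8}{3}$ for the remaining six rows, which is exactly $s_3(6,n)$ for $n\equiv1\pmod 6$ up to the $+1$ that Theorem~\ref{width6} charges. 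This suggests splitting off a six-row cap and exploiting the fact that, in the stacked configuration, the cap only needs to handle what $A$ leaves undone.

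I will place the $b$ copies of $A$ in the bottom $6b$ rows, with the bottom row of the grid \emph{not} separately added. Instead, I will design a block $C$ of six rows (rows $1$ through $6$) sitting on top, whose last row has precisely the odd cells infected. Lemma~\ref{lem:AorA'} is stated with the odd row at the bottom, so I will reflect it vertically: put the $A$-copies above and the odd row below, or equivalently reflect everything. Concretely, the grid is $C$ (six rows, whose bottom row is the "odd cells" row required by the reflected lemma) followed by $b$ reflected copies of $A$. Lemma~\ref{lem:AorA'} then guarantees that everything outside the two rows of $A$ farthest from $C$ becomes infected, i.e.\ all but the last two rows of the grid. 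So $C$ must be arranged differently. The better plan is to put $C$ at the bottom and the $A$-stack at the top: the $6b+1$ rows consisting of the $A$-copies plus the top row of $C$ (all odd cells infected) satisfy Lemma~\ref{lem:AorA'}, so all of those rows except the top two of the grid get infected. Block $C$ must then, using $\frac{7n+8}{3}$ cells including its odd top row, (i) infect its own five lower rows, given that the row above it becomes fully infected, and (ii) supply the help needed to finish the top two rows of the grid. For (ii), note that in $A$ the top two rows fail only at column $3$ and the right part of row $2$. Adding one extra cell such as $(1,n)$ or $(2,n)$ triggers the cascade along row $2$ and then up column $3$; I will charge that cell to $C$'s budget.

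For $C$ itself I will mimic the $6\times n$, $n\equiv1\pmod 6$ construction from Theorem~\ref{width6}. Use period-$6$ columns with the odd/even alternating rows, and finish the last few columns with a tailored gadget, so that the count is $\frac{n+1}{2}+(\text{rest})=\frac{7n+8}{3}$ (the extra one being the top-corner cell). I will then verify the cascade: first the rows adjacent to fully infected rows, then the columns $1,5\pmod6$ pattern, then the sparse rows cascading left or right. The main obstacle is designing this bottom block and its last-column gadget so that the count is exactly $\frac{7n+8}{3}$ while the cascade still closes. That is also where $n\equiv1$ rather than $3\pmod 6$ matters. I expect to test small cases such as $12\times 7$ and $12\times 13$ by hand first. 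The case $b=0$ (that is, $m=6$) is already covered by Theorem~\ref{width6} for $n=7$, and it is otherwise excluded or handled separately, as the paper notes.
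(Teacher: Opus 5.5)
Your step (ii) is false, and the whole layout depends on it. With the $A$-stack on top, rows $1,2$ of the grid are rows $1,2$ of the top copy of $A$. These are exactly the rows Lemma~\ref{lem:AorA'} does not finish, and now nothing lies above them. Your description ``only column $3$ and the right part of row $2$ fail'' comes from the lemma's proof for copies that \emph{do} have an infected row above them.

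Row $1$ of $A$ leaves $(1,2),(1,3),(1,4)$ uninfected (for $A'$: $(1,6),(1,7),(1,8)$). Two adjacent uninfected boundary cells can never become infected, so adding $(1,n)$ or $(2,n)$ is useless. Suppose you add $(1,3)$ and grant that rows $3,\dots,m$ all become infected. The cascade still stalls with $(1,6),(1,8),\dots,(1,n-1)$ and $(2,6),\dots,(2,n)$ uninfected. These cells form one tree containing $(n-3)/2$ boundary cells of degree $3$. By the criterion of Dukes, Noel, and Romer quoted in the introduction, each uninfected component may contain at most one such cell. Deleting a cell from a tree of maximum degree $3$ creates at most two new components, so at least $\lceil (n-5)/4\rceil$ more cells are needed; the count for $A'$ comes out the same.

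This cost cannot be charged to $C$. Apply the perimeter argument of Theorem~\ref{thm:rectlower} to the lower five rows of $C$, even with the row above them infected from the start: those rows need at least $\frac{5(n+1)}{3}$ initially infected cells. After that and the $\frac{n+1}{2}$ odd cells of $C$'s top row, only $\frac{n+3}{6}$ cells of the budget $\frac{7n+8}{3}$ remain. That is less than $1+\lceil\frac{n-5}{4}\rceil$ for every $n\equiv 1\pmod 6$, so no choice of $C$ rescues this arrangement.

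The fix is to separate the two things you fused into one block. The odd row goes at the bottom of the stack, and the extra rows go on top, directly above the two rows the lemma leaves unfinished, so that this cap supplies what those rows lack. That is the paper's construction.

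For $n=7$ it spends exactly your budget of $19$ cells outside the $A$-copies, split as five top rows plus the odd bottom row. The top rows are: rows $1,3$ all odd cells; row $2$ cells $2,6$; row $4$ cells $1,7$; row $5$ cells $2,4,6$. Between them sit $(m-6)/6$ copies of $A$. The top seven rows are then checked directly, using that row $8$ becomes infected.

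For $n\ge 13$ the paper uses an $11$-row cap, organised around a period-$6$ block in columns $9$ through $n-5$ and placed over copies of $A'$ rather than $A$. The case $m=12$ is handled in Remark~\ref{rmk:green}.

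Separately from the layout error, your proposal never specifies the gadget or verifies a cascade, and that is where essentially all the work of this theorem lies.
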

\begin{proof}
    To establish the upper bound for $n\ge 13$, consider the following infection pattern. The infected cells in the first eleven rows are as follows: all odds; cell 2 and all cells $0,4\pmod{6}$; all cells $1,3\pmod{6}$ but cell $3$; cells $1,3,n$ and all cells $2\pmod{3}$ but $2$; cell $2$ and all cells $1\pmod{3}$ but cells $1,n$; cells $1,n-2,n$ and all cells $0\pmod{3}$ but $3,n-1$; cells $2,n-1$ and all cells $1\pmod{3}$ except $1,n-3,n$; cells $1,n-2,n$ and all cells $0\pmod{3}$ but $n-1$; all cells $1\pmod{3}$ except cells $1,n$; cells $1,n$ and all cells $3,5\pmod{6}$; all cells $0,2\pmod{6}$. The next $m-12$ rows consist of $(m-12)/6$ copies of $A'$ stacked on top of each other. Finally, in the last row all odd cells are infected. For an example when $m=24,n=19$ see Figure \ref{fig:s01}.

\begin{figure}[H]
    \centering
    \includegraphics[scale=0.6]{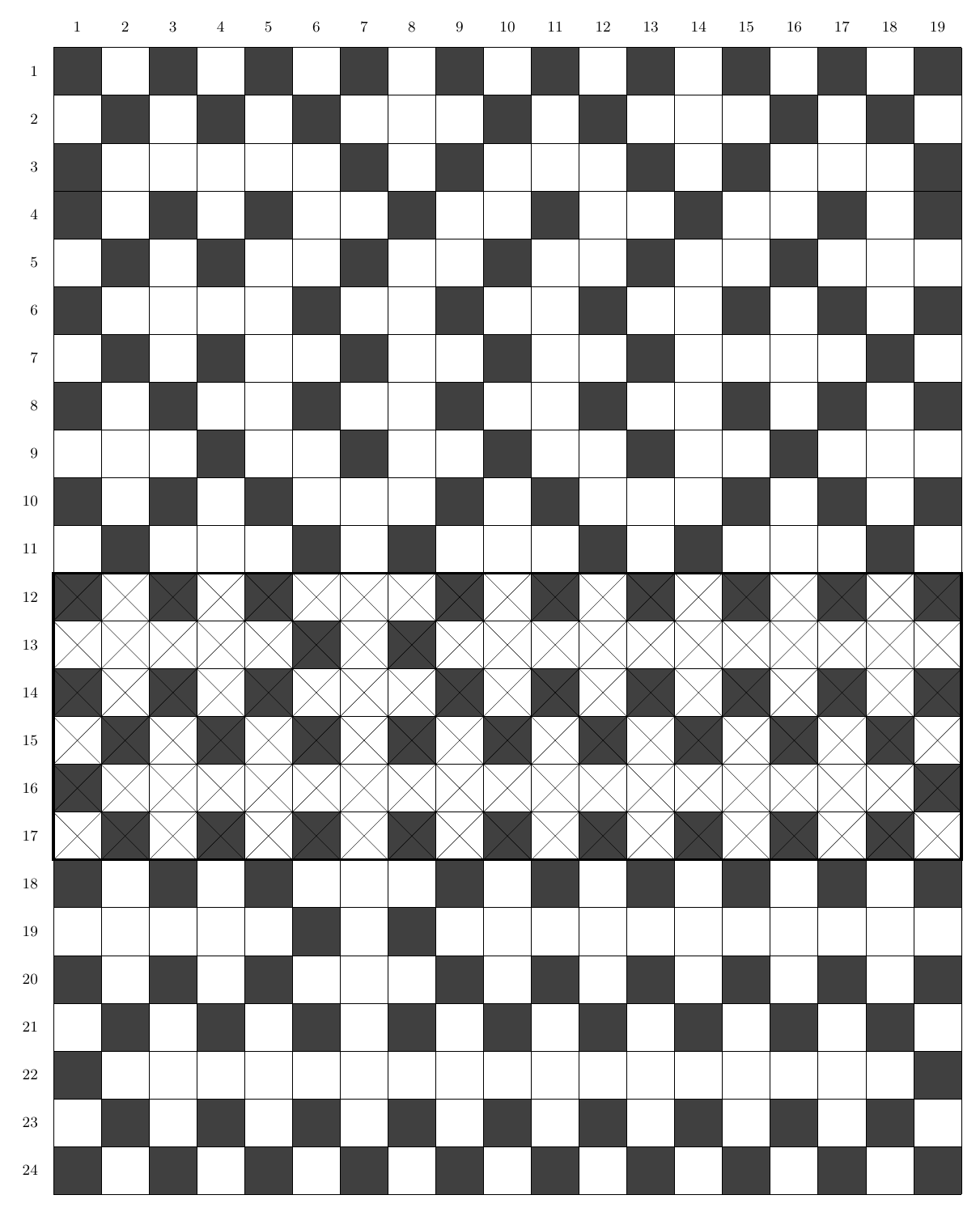}
    \caption{A minimum percolating set in the \mbox{$24\times19$} grid, as per Theorem~\ref{thm:rect01}, with repeating block $A'$ marked.}\label{fig:s01}
\end{figure}
    
    This gives a total of \begin{align*}&(n+1)/2+(n+2)/3+(n-1)/3+(n+5)/3+(n-1)/3+(n+2)/3+(n-1)/3\\&+(n+5)/3+ (n-4)/3+(n+5)/3+(n-1)/3+\frac{m-12}{6}(4(n-1)/2+2(2))+(n+1)/2\\
&=13n/3+14/3+(m/6-2)(2n+2)\\
&=\frac{mn+m+n+2}{3}\end{align*}
infected cells, so it suffices to check that every cell becomes infected. From Lemma~\ref{lem:AorA'}, we know that this is the case for all but the first $13$ rows (the case $m=12$ is handled in Remark~\ref{rmk:green}). Restricting our attention to the first $13$ rows, it is helpful to view columns $9$ through $n-5$ as $(n-13)/6$ copies of a repeating $13\times 6$ block which we call $C$.

Each copy of $C$ has some internal infections -- new cells which are infected by the infected cells in their own copy of $C$. Then, note that regardless of whether a copy of the block $C$ is to the right of another copy of block $C$ or whether it is the first copy and hence to the right of column $8$, the cell in row $11$ of the first column of $C$ becomes infected, consequently infecting the cells in row $11$, then row $12$ of the second column of $C$. Regardless of whether a copy of the block $C$ is to the left of another copy of block $C$ or whether it is the last copy and hence to the left of column $n-4$, the cell in row $12$ of the sixth column of $C$ becomes infected. Additionally in the first column of each copy of $C$ as well as in column $n-4$, rows $1$ and $3$ are initially infected and row $2$ becomes infected by its neighbors above, below, and to the right. Thus, regardless of whether a copy of block $C$ is to the left of another copy of block $C$ or whether it is the last copy and hence to the left of column $n-4$, the cell in row $3$ in the sixth column of $C$ becomes infected and consequently the cells in rows $2$, then $1$ of the sixth column of $C$ do as well. Ultimately, the uninfected cells within a copy of $C$ consist of two disjoint paths: a path from row $5$ in the first column to $C$ to row $5$ in the sixth column of $C$, and a horizontal path consisting of the entire row $13$ of $C$. Note in particular that for each copy of $C$, the entire row $13$ is infected.

Because the endpoint in row $5$ in the sixth column of a block $C$ is adjacent to the endpoint in row $5$ in the first column of the next block $C$, these paths can be joined together. Aside from row $13$, there are no other adjacencies between uninfected paths in different copies of $C$, so ultimately these form a single path of uninfected cells from $(5,9)$ to $(5,n-5)$. The only other uninfected cells in the range from column $9$ through column $n-5$ are those in row $13$.

Keeping in mind that the entire row $14$ is infected and that regardless of whether there are any copies of the block $C$, column $9$ is the same, every cell in the first eight columns becomes infected. For $n=13$, this establishes that every cell in the first $n-5$ columns becomes infected. We now show that this still holds for larger $n$. Because $(5,8)$ is now infected, this infects $(5,9)$ and the infection cascades through the entire path until $(5,n-5)$. Because $(13,8)$ is infected along with the entirety of rows $12$ and $14$ for columns $9$ through $n-5$, this causes $(13,9)$ to be infected, after which the infection cascades across row $13$ through column $n-5$. Thus we have established that every cell in the first $n-5$ columns becomes infected.

Before considering that uninfected cells in column $n-4$ can have infected neighbors in column $n-5$, we note that every cell in the last five rows becomes infected except for row $13$, $(12, n-3), (11, n-4), (11, n-3)$, and a path from $(5, n-4)$ to $(5,n)$ which is disjoint from these. As $(5, n-5)$ is infected, $(5, n-4)$ becomes infected by its neighbors to the left, right, and below, and this infection cascades along the path to $(5, n)$. $(11, n-4)$ becomes infected by virtue of its neighbors above, below, and to the left, and consequently infects $(11, n-3)$, followed by $(12, n-3)$. $(13, n-4)$ becomes infected by its neighbors above, below, and to the left, and this infection cascades across row $13$ through column $n$. Thus, the whole grid becomes infected.

For $n=7$, we use a different infection pattern. In rows $1$ and $3$, all odd cells are infected. In row $2$, cells $2$ and $6$ are infected. In row $4$, cells $1$ and $7$ are infected. In row $5$, cells $2,4,6$ are infected. The next $m-6$ rows consist of $(m-6)/6$ copies of $A$ stacked on top of each other. Finally, in the last row, all odd cells are infected. See Figure~\ref{figure:07} for an example when $m=12$.

This gives a total of
\[
2(4)+2+2+3+\left(\frac{m-6}{6}\right)(4(3)+2(2))+4=19+16\left(\frac{m-6}{6}\right)=\frac{8m+9}{3}=\frac{mn+m+n+2}{3}
\] infected cells, so it suffices to check that every cell becomes infected. From Lemma~\ref{lem:AorA'}, we know that this is the case for all but the first $7$ rows. Restricting our attention to the first $8$ rows, where we know the entire row $8$ is infected, it is easy to verify that every cell in the first $7$ rows becomes infected.

\begin{figure}[h]
    \centering
    \includegraphics[scale=0.6]{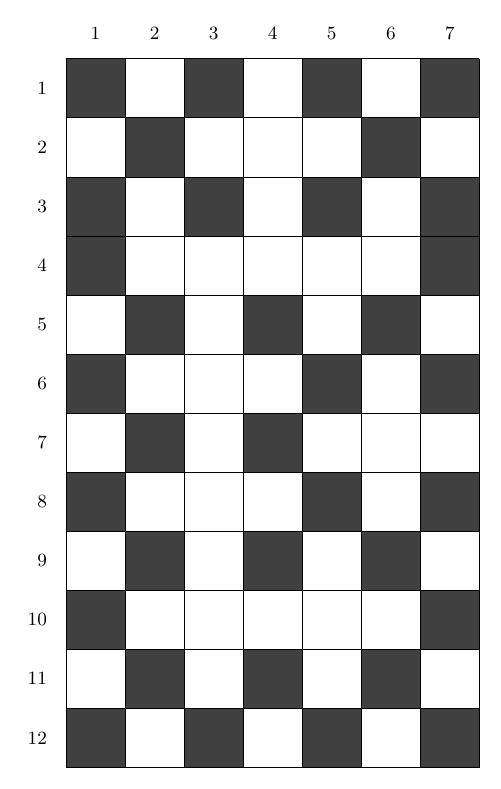}
    \caption{The $n=7$ case of Theorem \ref{thm:rect01}.}\label{figure:07}
\end{figure}

\end{proof}

\begin{figure}[H]
    \centering
    \includegraphics[scale=0.6]{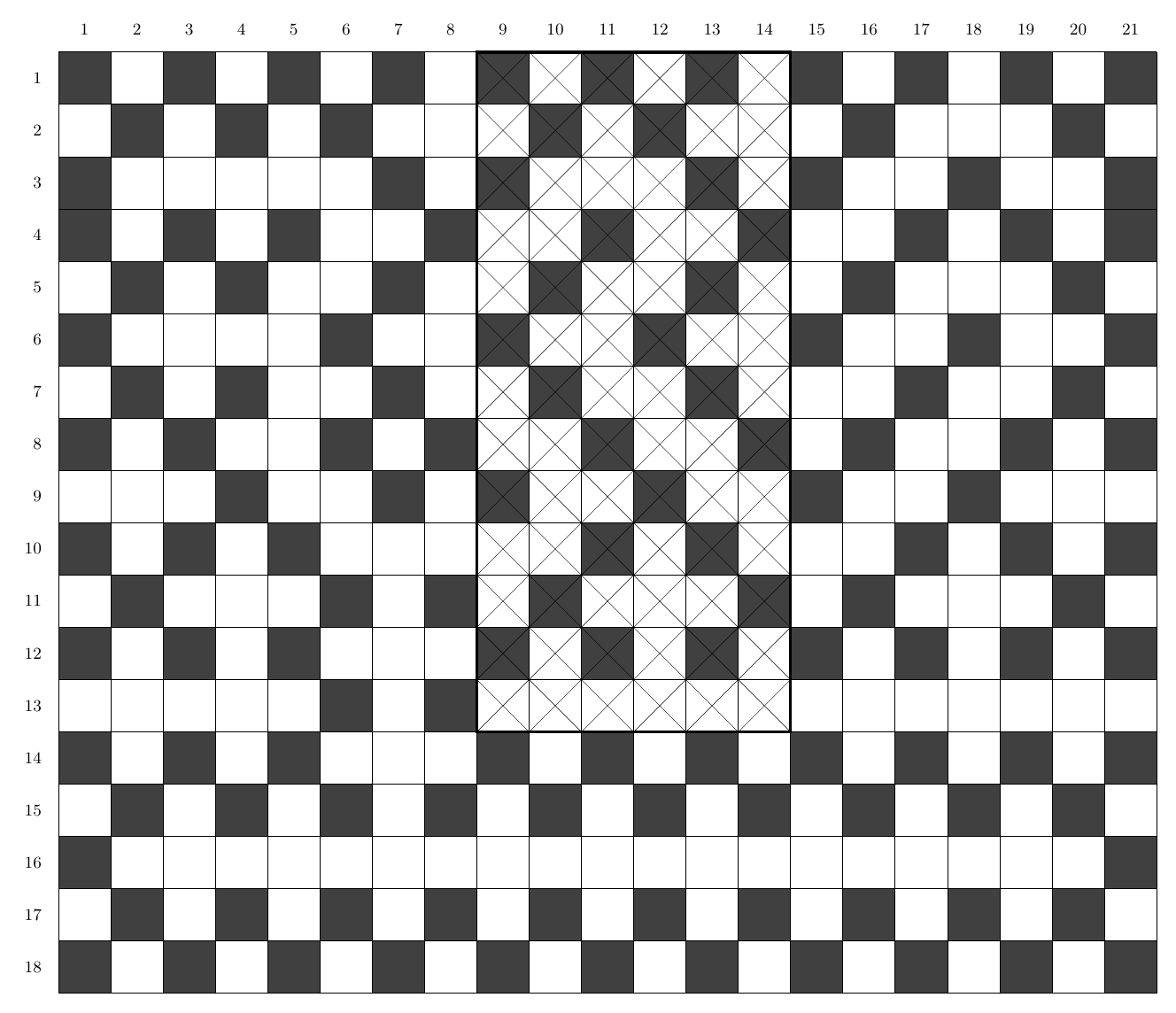}
    \caption{A minimum percolating set in the \mbox{$12\times 21$} grid, as per Theorem \ref{thm:rect03}; the block C is
marked with diagonal lines.}\label{figure:03}
\end{figure}

\begin{theorem}\label{thm:rect03}
    If $m,n\ge 7$, $m\equiv 0\pmod{6}$, and $n\equiv 3\pmod{6}$, then \[
    s_3(m,n)\le\frac{mn+m+n}{3}+1.
    \]
\end{theorem}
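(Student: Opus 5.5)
We give an explicit percolating set of size $\frac{mn+m+n}{3}+1$, in the same style as Theorem~\ref{thm:rect01}. The top part of the grid is a fixed ``header'' of a few rows. Below it we stack $(m-c)/6$ copies of $A$ (or $A'$), with the odd cells of the last row infected, where $c$ is a constant with $c\equiv 1\pmod 6$, likely $c=7$ or $13$. By Lemma~\ref{lem:AorA'}, every cell outside the first two rows of the stacked region becomes infected, and in particular the row just below the header is fully infected.

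\textbf{Counting.} Each copy of $A$ contributes $2(n+1)/2+(n-1)/2+2=2n+2$ infected cells, so it adds exactly $\frac{6n+6}{3}$. This is consistent with the target increasing by $\frac{6(n+1)}{3}$ whenever $m$ increases by $6$. The base of $(n+1)/2$ cells is also fixed. It therefore suffices to design the header rows so that, for $m=c+\ldots$, the total equals $\frac{mn+m+n}{3}+1$. With $m\equiv 0$ and $n\equiv 3\pmod 6$, $mn+m+n\equiv 0\pmod 3$, so the target is an integer.

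\textbf{Header design.} To build the header, I would split the columns as in Theorem~\ref{thm:rect01}. There will be a fixed left block of a few columns, then $(n-r)/6$ copies of a repeating $(c+1)\times 6$ block $C$, then a fixed right block of a few columns, with $r\equiv 3\pmod 6$. The inner rows of $C$ will use the $1\pmod 3$ / $0\pmod 3$ / $2\pmod 3$ staggered pattern from Theorem~\ref{thm:rect01}, which costs about $n/3$ per row. The top row of the header is all odd columns, as the boundary requires, and the second row is roughly every third cell.

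\textbf{Verification.} I would check percolation in the same order as before:
\begin{enumerate}
\item Within each copy of $C$, the internal infections leave only disjoint uninfected paths that run horizontally through the block.
\item These paths join across consecutive copies of $C$ into a single path.
\item The left block becomes fully infected, using the infected row below the header.
\item The left block then ignites the concatenated path, and the infection cascades to the right block.
\item The right block then finishes, much as columns $n-4,\dots,n$ did in Theorem~\ref{thm:rect01}.
\end{enumerate}
For small $n$, such as $n=9$ and possibly $n=15$, where no copies of $C$ fit, I would give a separate small pattern, as was done for $n=7$ there. The degenerate value of $m$ with no copies of $A$ is deferred to the later remark.

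\textbf{Fallback.} An alternative that avoids new constructions would be to apply Theorem~\ref{4rt}. The four pieces could be chosen with sizes such as $x-1\equiv 5\pmod 6$ and $m-x\equiv 0\pmod 6$, with $y-1\equiv 5\pmod 6$ and $n-y\equiv 3\pmod 6$. However, the pieces of size $0\times 3\pmod 6$ are themselves of the type being proved, and the lost terms ($+1$ and the even-side $+2$'s) overshoot, so I expect this to be too lossy.

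\textbf{Main obstacle.} The main difficulty is designing the header rows and the block $C$ so that the count hits the target exactly. The $\{0,3\}$ congruences permit essentially no pairs of adjacent infected cells beyond the forced ones, and no surrounded uninfected cells (Remark~\ref{WhenBoundTight}), so only a slack of one is available. At the same time, the uninfected region must remain a forest with no boundary-to-boundary paths. I would start from the Theorem~\ref{thm:rect01} header and modify its last few rows so that the row period fits $n\equiv 3$. I would then tune the fixed end blocks by hand, checking the slack against the degree-counting argument in the proof of Theorem~\ref{thm:rectlower}.
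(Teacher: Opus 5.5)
\textbf{There is a genuine gap: the construction is never given.} Your outline has the same architecture as the paper's proof: an explicit header, stacked copies of $A'$, odd cells in row $m$, a repeating $6$-column block $C$ in the top rows with hand-built end blocks, and a separate pattern for $n=9$. But it stops exactly where the proof has to begin. You never specify the header rows, the block $C$, the end blocks, or the $n=9$ pattern, so neither the count nor percolation can be checked. Since the statement is an upper bound, the construction \emph{is} the proof.

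Nor is this a routine tuning step. Here the target $\frac{mn+m+n}{3}+1$ equals $LB$, so the counting in Theorem~\ref{thm:rectlower} permits a total of at most three adjacent infected pairs and cells infected with four infected neighbors. Two of these are already forced in columns $1$ and $n$. On top of that, the uninfected region must be arranged so that everything still fills in.

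The paper supplies the missing object. For $n\ge 15$ it uses an $11$-row header built from residue classes mod $3$ and mod $6$, with explicitly listed exceptional cells in columns $1$--$7$ and $n-5$--$n$. This is followed by $(m-12)/6$ copies of $A'$ and the odd cells of row $m$. In the first $13$ rows, columns $9$ through $n-7$ form $(n-15)/6$ copies of a $13\times 6$ block. Its leftover uninfected cells are row $13$ plus three paths, two of which start and end in the same column of the block rather than running horizontally. These are shown to chain with the leftovers of the end blocks into a single path ignited at $(10,9)$. For $n=9$ the paper uses a separate $5$-row header with $(m-6)/6$ copies of $A'$.

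\textbf{Several parameters you do fix are also wrong.}
\begin{enumerate}
\item Lemma~\ref{lem:AorA'} does not make the row just below the header fully infected. It leaves the top two rows of the first copy of $A'$ uncovered, which is why the paper analyzes the first $13$ rows and only uses that row $14$ is infected.
\item With $h$ header rows one has $m=h+6b+1$, so $m\equiv 0\pmod 6$ forces $h\equiv 5\pmod 6$. With your $c\equiv 1\pmod 6$, the number $(m-c)/6$ of copies is not an integer.
\item Your per-copy count $2\cdot\frac{n+1}{2}+\frac{n-1}{2}+2$ is not $2n+2$. The correct tally is $2\cdot\frac{n-1}{2}+2\cdot\frac{n-1}{2}+4$.
\end{enumerate}

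Your rejection of the Four Rectangle fallback is right. Set $\epsilon_{ij}=3s_3(a_i,b_j)-a_ib_j-a_i-b_j$; then the trick yields $\frac13\bigl(mn+m+n+\sum\epsilon_{ij}\bigr)$. Since $(x-1)+(m-x)=m-1$ is odd, one of these sides is even, and the two pieces along it already have $\sum\epsilon\ge 4$. That exceeds the allowed $3$.
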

\begin{proof}

To establish the upper bound for $n\ge 15$, consider the following infection pattern. The infected cells in the first eleven rows are as follows: all odds; cells $2, n-1$ and all cells $0,4\pmod{6}$ but $n-3$; cell $n-3$ and all cells $1,3\pmod{6}$ but $3,n-2$; cells $1,3,n-2,n$ and all cells $2\pmod{3}$ but $2,n-1$; cells $2,n-1$ and all cells $1\pmod{3}$ but $1,n-2$; cell $1$ and all cells $0\pmod{3}$ but $3$; cells $2,n-4,n-1$ and all cells $1\pmod{3}$ but $1,n-5,n-2$; cells $1,3,6,n-5,n-2,n$ and all cells $2\pmod{3}$ but $2,5,n-4,n-1$; cells $4,7$ and all cells $0\pmod{3}$ but $3,6,n$; cells $3,n$ and all cells $1,5\pmod{6}$ but $7$; cell $6$ and all cells $2,4\pmod{6}$ but $4$. The next $m-12$ rows consist of $(m-12)/6$ copies of $A'$ stacked on top of each other. Finally in the last row, all cells in odd columns are infected. See Figure~\ref{figure:03} for the example of a $18\times 21$ grid. This gives a total of
\begin{align*}
   &(n+1)/2+n/3+n/3+(n/3+2)+n/3+n/3+n/3+(n/3+2)\\&+(n/3-1)+(n/3+1)+n/3+(m/6-2)(4(n-1)/2+4)+(n+1)/2\\
   &=13n/3+5+(m/6-2)(2n+2)\\
   &=13n/3+5+(mn/3-4n+m/3-4)\\
   &=\frac{mn+m+n}{3}+1
\end{align*} infected cells, so it suffices to check that every cell becomes infected. From Lemma~\ref{lem:AorA'}, we know that this is the case for all but the first $13$ rows (the case $m=12$ is handled in Remark \ref{rmk:green}). Restricting our attention to the first $13$ rows, it is helpful to view columns $9$ through $n-7$ as $(n-15)/6$ copies of a repeating $13\times 6$ block which we call $C$. 

Note that regardless of whether a copy of the block $C$ is to the right of another copy of block $C$ or whether it is the first copy and hence to the right of column $8$, the cell in row $11$ in the first column of $C$ becomes infected. In the first column of any block $C$ as well as in column $n-6$, the cells in rows $1$ and $3$ are initially infected and the cell in row $2$ becomes immediately infected by its neighbors above, below, and to the right. Thus, regardless of whether a copy of block $C$ is to the left of another copy of block $C$ or whether it is the last copy and hence to the left of column $n-6$, the cell in row $3$ in the sixth column of $C$ becomes infected, and consequently the cells in rows $2$ and $1$ of the sixth column of $C$ do as well. Ultimately, the uninfected cells within a copy of $C$ consist of four disjoint paths: a path from row $10$ in the first column of $C$ to row $7$ in the first column of $C$; a path from row $5$ in the first column of $C$ to row $10$ in the sixth column of $C$; a path from row $5$ in the sixth column of $C$ to row $7$ in the sixth column of $C$; and a horizontal path consisting of the entire row $13$ of $C$. Note in particular that in every copy of $C$, the entire row $12$ is infected.

Keeping in mind that the entire row $14$ is infected and that regardless of whether or not there are any copies of block $C$, column $9$ is the same, every cell in the first eight columns becomes infected besides $(5,8), (6,8)$, and $(7,8)$. Regardless of whether there are any copies of block $C$, $(4, n-7), (8,n-7), (11, n-7)$ are initially infected, so every cell in the last seven columns becomes infected except the following three disjoint paths: a path from $(5, n-6)$ to $(1, n-3)$, a path from $(7, n-6)$ to $(10, n-6)$, and a horizontal path from $(13, n-6)$ to $(13, n)$. Note in particular that the entire row $12$ is infected.

All cells in rows $12$ and $14$ are infected, along with $(13, 8)$, so $(13, 9)$ becomes infected, and the infection cascades until the entire row $13$ is infected.

Suppose there is at least one copy of $C$. For the uninfected paths among the copies of $C$, the endpoint in row $7$ of the first column of a block $C$ is adjacent to the endpoint in row $7$ of the sixth column of the previous block $C$ and the endpoint in row $5$ in the first column of a block $C$ is adjacent to the endpoint in row $5$ of the sixth column of the previous block $C$ (or in the case of the first copy of $C$, these endpoints are adjacent to uninfected cells in column $8$ which can be viewed as endpoints of an uninfected path from $(5,8)$ to $(7,8)$). Furthermore, the endpoint in row $10$ of the sixth column of a copy of $C$ is adjacent to the endpoint in row $10$ of the first column of the next copy of $C$. Thus, there is a single path of uninfected cells from $(10, 9)$ to $(10, n-7)$. This path then extends through the last $7$ columns, from $(10, n-6)$ to $(7, n-6)$. It then extends through $(7, n-7)$ to $(5, n-7)$, using the last remaining uninfected cells in the first $n-7$ columns. Finally it extends from $(5, n-6)$ to $(1, n-3)$. Thus, all remaining uninfected cells form a single path. The endpoint $(10, 9)$ becomes infected from its neighbors above, below, and to the left, and then the infection cascades through the entire path, until the whole grid is infected.

If there are no copies of $C$, there are exactly $15$ columns. The remaining uninfected cells form a single path from $(10, 9)$ to $(1, 12)$. $(10, 9)$ becomes infected from its neighbors above, below, and to the left, and then the infection cascades through the entire path, until the whole grid is infected.

For $n=9$, we use a different infection pattern. In rows $1$ and $3$, the cells in odd columns are infected. In rows $2$ and $5$, the cells in columns $2,6,8$ are infected. In row $4$, columns $1,4,9$ are infected. The rows from $6$ through $m-1$ consist of $(m-6)/6$ copies of $A'$ stacked on top of each other. Finally, in row $m$, the odd cells are infected. See Figure~\ref{figure:09} for an example with $m=12$. This gives a total of \[
2(5)+2(3)+3+(m/6-1)(4(4)+2(2))+5=24+20(m/6-1)=\frac{10m+12}{3}=\frac{mn+m+n}{3}+1
\] infected cells, so it suffices to check that every cell becomes infected. From Lemma~\ref{lem:AorA'}, we know that this is the case for all but the first $7$ rows. Restricting our attention to the first $8$ rows, where we know the entire row $8$ is infected, it is easy to verify that every cell in the first $7$ rows becomes infected.

\end{proof}

\begin{figure}[H]
    \centering
    \includegraphics[scale=0.6]{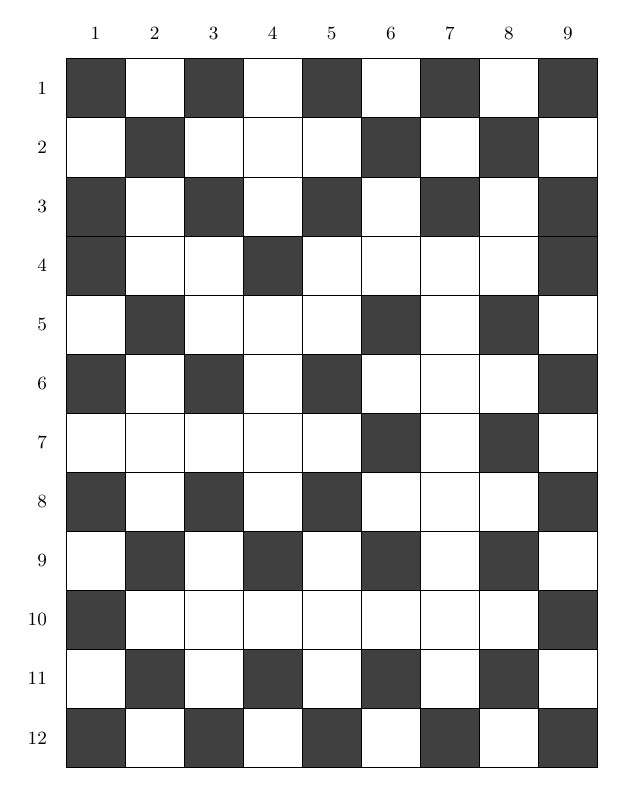}
    \caption{The $n=9$ case of Theorem \ref{thm:rect03}.}\label{figure:09}
\end{figure}

\begin{figure}[H]
    \centering
    \includegraphics[scale=0.6]{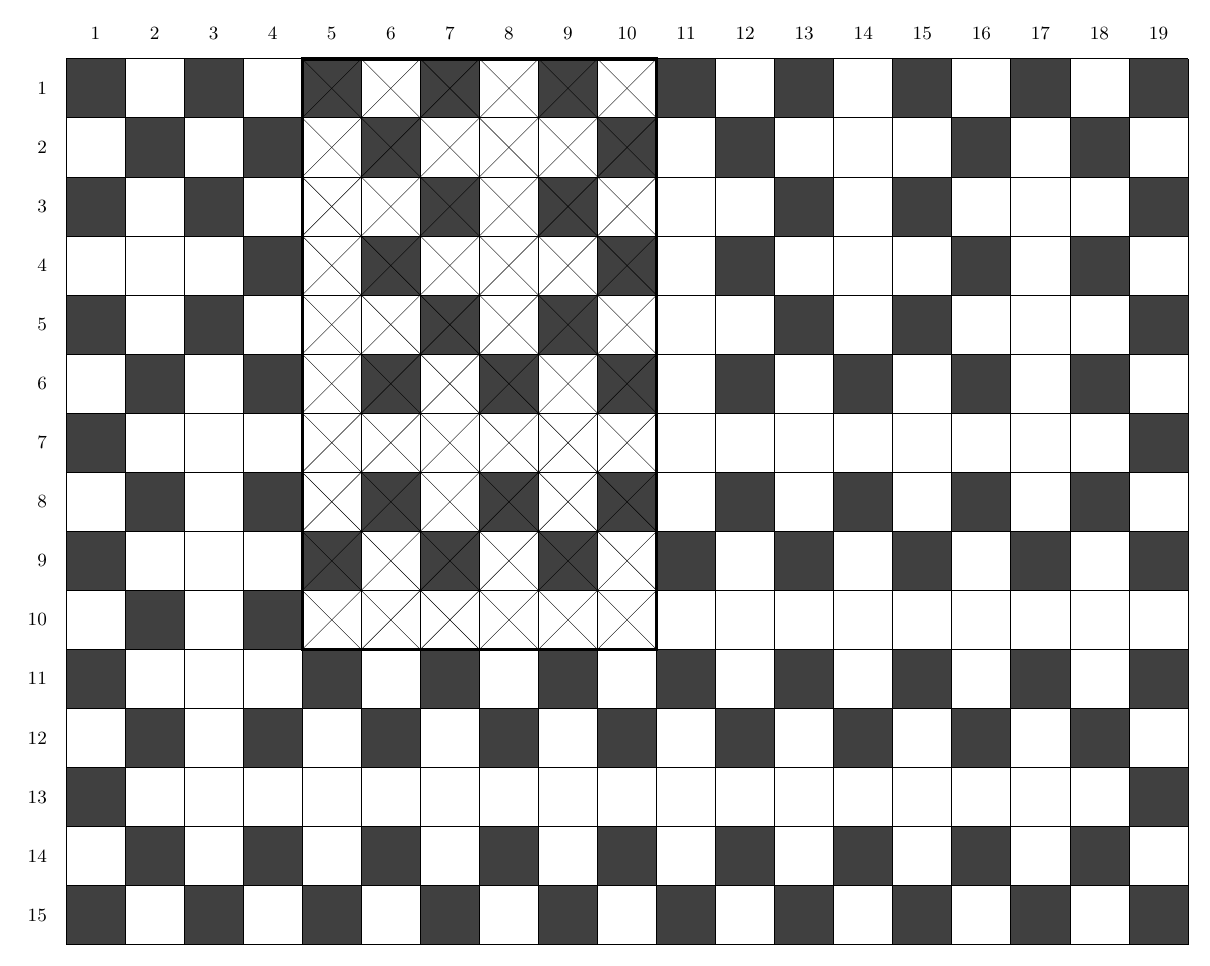}
    \caption{A minimum percolating set in the \mbox{$15\times 19$} grid, as per Theorem \ref{thm:rect13}; the block $C$ is marked with diagonal lines.}\label{figure:13}
\end{figure}

\begin{theorem}\label{thm:rect13}
    If $m,n\ge 7$, $m\equiv 3\pmod{6}$, and $n\equiv 1\pmod{6}$, then
    \[
    s_3(m,n)\le\frac{mn+m+n+2}{3}
.    \]
\end{theorem}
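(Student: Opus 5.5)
We will prove Theorem~\ref{thm:rect13} by an explicit construction, in the same style as Theorems~\ref{thm:rect01} and~\ref{thm:rect03}. There is a $13$-row (or $7$-row) header block at the top of the grid, followed by copies of $A$ or $A'$, followed by a bottom row in which the odd cells are infected.

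Since $m\equiv 3\pmod 6$, write $m=6b+9$, or equivalently $m=6b'+3$. We do not want to stack blocks directly under a $13$-row header, since that would force $m\equiv 1\pmod 6$. Instead we use a header of $9$ rows, so that the remaining $m-9=6b$ rows consist of $b$ copies of $A'$ (or $A$), with the last of these rows carrying the odd cells of Lemma~\ref{lem:AorA'}.

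Lemma~\ref{lem:AorA'} guarantees that everything below the first two rows of the stacked part becomes infected, including a fully infected row just beneath the header. So it suffices to design the first $9$ rows, together with rows $10,11$ (the top two rows of the first copy of $A'$), so that they percolate given a fully infected row $12$.

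The counting target is
\[
\frac{mn+m+n+2}{3}-b(2n+2)=\frac{9n+9+n+2}{3}=\frac{10n+11}{3},
\]
which is exactly the budget for the header rows and the bottom row combined. Each copy of $A'$ costs $4\cdot\frac{n-1}{2}+4=2n+2$ cells, matching the $m$-coefficient, so the accounting is consistent. Since $\frac{10n+11}{3}$ is an integer when $n\equiv 1\pmod 3$, the constraint is arithmetically feasible.

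For the header we mimic the $11$-row pattern of Theorem~\ref{thm:rect01}, but shortened to $9$ rows. The top row gets all odd cells; below it come alternating rows with density roughly $\frac13$ (cells $\equiv 0,1,2\pmod 3$ with a few boundary adjustments). The header is chosen to be $6$-periodic in the columns, with a fixed left part of $8$ columns and a right part of about $5$ columns, and we view the middle as $(n-13)/6$ copies of a repeating $11\times 6$ block $C$.

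The verification then follows the template already used twice. Lemma~\ref{lem:AorA'} handles the lower rows. Next, the first $8$ columns fill in using the infected row below. Within the copies of $C$, the uninfected cells form a few disjoint paths whose endpoints in adjacent copies of $C$ glue into a single uninfected path running across the header. The infection enters this path at one end and cascades along it, finally sweeping through the last columns. Small values of $n$, namely $n=7$ and $n=13$ (zero copies of $C$), and the case $b=0$ (that is, $m=9$) will be given separate explicit patterns and checked directly, as in the remark referenced after Theorem~\ref{thm:rect43}.

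The main obstacle is finding the header itself. It must hit the exact count $\frac{10n+11}{3}$ minus the bottom row, and it must also contain no uninfected path joining two boundary cells and no uninfected tree with two degree-$3$ vertices, as required by the characterization lemma from \cite{DNR23}. We will search for it first for $n=19$, where there is one copy of $C$, by adapting the rows of the Theorem~\ref{thm:rect01} header. We will then check that the $C$-block interfaces glue correctly, so that every remaining uninfected cell lies on one path with an endpoint adjacent to three infected cells.
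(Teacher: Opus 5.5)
There is a genuine gap. Your proposal never exhibits the percolating set. For an upper bound of this kind the construction \emph{is} the proof, and you explicitly defer it (``the main obstacle is finding the header itself \ldots\ we will search for it''). You also defer the separate patterns for $n=7$, $n=13$ and $m=9$. Since nothing pins down which cells are infected, neither the count nor percolation can be checked, and the verification template you describe has nothing to act on.

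Moreover, the framework you set up cannot be realized as stated. A $9$-row header, $b=(m-9)/6$ copies of $A'$, and a separate bottom row of odd cells occupy $9+6b+1=m+1$ rows. If instead the odd row replaces the last row of the last block (as ``the last of these rows carrying the odd cells'' suggests), there are no longer $b$ full copies. Then Lemma~\ref{lem:AorA'} does not apply as stated, and the blocks do not cost $b(2n+2)$. Your budget $\frac{10n+11}{3}$ for the non-block part is correct, but it is the budget for an $8$-row header plus the bottom row.

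The paper uses exactly that. Rows $1$ through $8$ are, in order:
all odd cells;
cell $2$ and cells $\equiv 0,4\pmod 6$;
cells $\equiv 1,3\pmod 6$;
cells $\equiv 0,4\pmod 6$;
cells $\equiv 1,3\pmod 6$;
all even cells;
only cells $1$ and $n$;
all even cells.
These are followed by $(m-9)/6$ copies of $A$ (not $A'$, which requires $n\ge 9$), and then the odd cells of row $m$. The total is $\frac{10n+11}{3}+\frac{m-9}{6}(2n+2)=\frac{mn+m+n+2}{3}$.

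Lemma~\ref{lem:AorA'} reduces the check to the first $10$ rows with row $11$ fully infected. Columns $5$ through $n-3$ split into $(n-7)/6$ copies of a $10\times 6$ block; after internal infections, only rows $7$ and $10$ of each copy remain uninfected. The first four columns then fill in, rows $10$ and $7$ cascade rightward from column $5$, and the last three columns finish.

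This handles every $n\equiv 1\pmod 6$, $n\ge 7$, uniformly, with zero copies of the block when $n=7$. The case $m=9$ is covered by Remark~\ref{rmk:green}. No $11\times 6$ block or $(n-13)/6$-copy structure borrowed from Theorem~\ref{thm:rect01} is needed.
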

\begin{proof}
 To establish the upper bound, we use the following infection pattern. The infected cells in the first eight rows are as follows: all odds; cell $2$ and all cells $0,4\pmod{6}$; all cells $1,3\pmod{6}$; all cells $0,4\mod{6}$; all cells $1,3\pmod{6}$; all evens; just cells $1$ and $n$; all evens. The next $m-9$ rows consist of $(m-9)/6$ copies of $A$. 
Finally in the last row, the odd cells are infected. See Figure~\ref{figure:13} for the example of a $15\times 19$ grid. This gives a total of \begin{align*}&(n+1)/2+(n+2)/3+(n+2)/3+(n-1)/3+(n+2)/3\\&+(n-1)/2+2+(n-1)/2+(\frac{m-9}{6})(4(n-1)/2+2(2))+(n+1)/2\\&=\frac{mn+m+n+2}{3}\end{align*} infected cells, so it suffices to check that every cell becomes infected. From Lemma~\ref{lem:AorA'}, we know that this is the case for all but the first $10$ rows (the case $m=9$ is handled in Remark~\ref{rmk:green}). Restricting our attention to the first $10$ rows, it is helpful to view columns $5$ through $n-3$ as $(n-7)/6$ copies of a repeating $10\times 6$ block which we call $C$.

Note that regardless of whether a copy of the block $C$ is to the left of another copy of block $C$ or whether it is the last copy and hence to the left of column $n-2$, the cells in rows $1$ and $9$ in the sixth column of $C$ become infected. Also, in the sixth column of each copy of $C$ as well as in column $4$, the cells in rows $2,4,6,8$ are infected and those in rows $3,5$ immediately become infected. Consequently, the cells in rows $2,3,4,5,6,8$ of the first column of each copy of $C$ become infected. Ultimately, the only uninfected cells within a copy of $C$ are the entire row $7$ and the entire row $10$.

Regardless of whether there are any copies of $C$, $(1, 5)$ is initially infected so $(1, 4)$ becomes infected. Keeping in mind that the entire row $11$ is infected and that $(9, 5)$ is infected, $(9, 4)$ becomes infected, in turn infecting rows $(9, 3), (8, 3)$, and $(7, 3)$ in order, followed by $(7, 4)$. Thus every cell in the first $4$ columns is infected.

Since the entire row $11$, the entire row $9$ for columns $5$ through $m-3$, and $(10, 4)$ are infected, $(10, 5)$ becomes infected and the infection then cascades across row $10$ through column $m-3$. Similarly since every cell in rows $6$ and $8$ is infected for columns $5$ through $m-3$ and $(7, 4)$ is infected as well, $(7, 5)$ becomes infected and the infection then cascades across row $7$ through column $m-3$. 

Now the only remaining uninfected cells are in the last three columns. It is easy to see that every cell in the last two columns as well as $(10, m-2)$ becomes infected. Then $(2, m-2)$ becomes infected by virtue of its neighbors to the left, right, and above, and the infection cascades down the column through row $8$, leaving everything infected.
\end{proof}

\begin{figure}[H]
    \centering
    \includegraphics[scale=0.6]{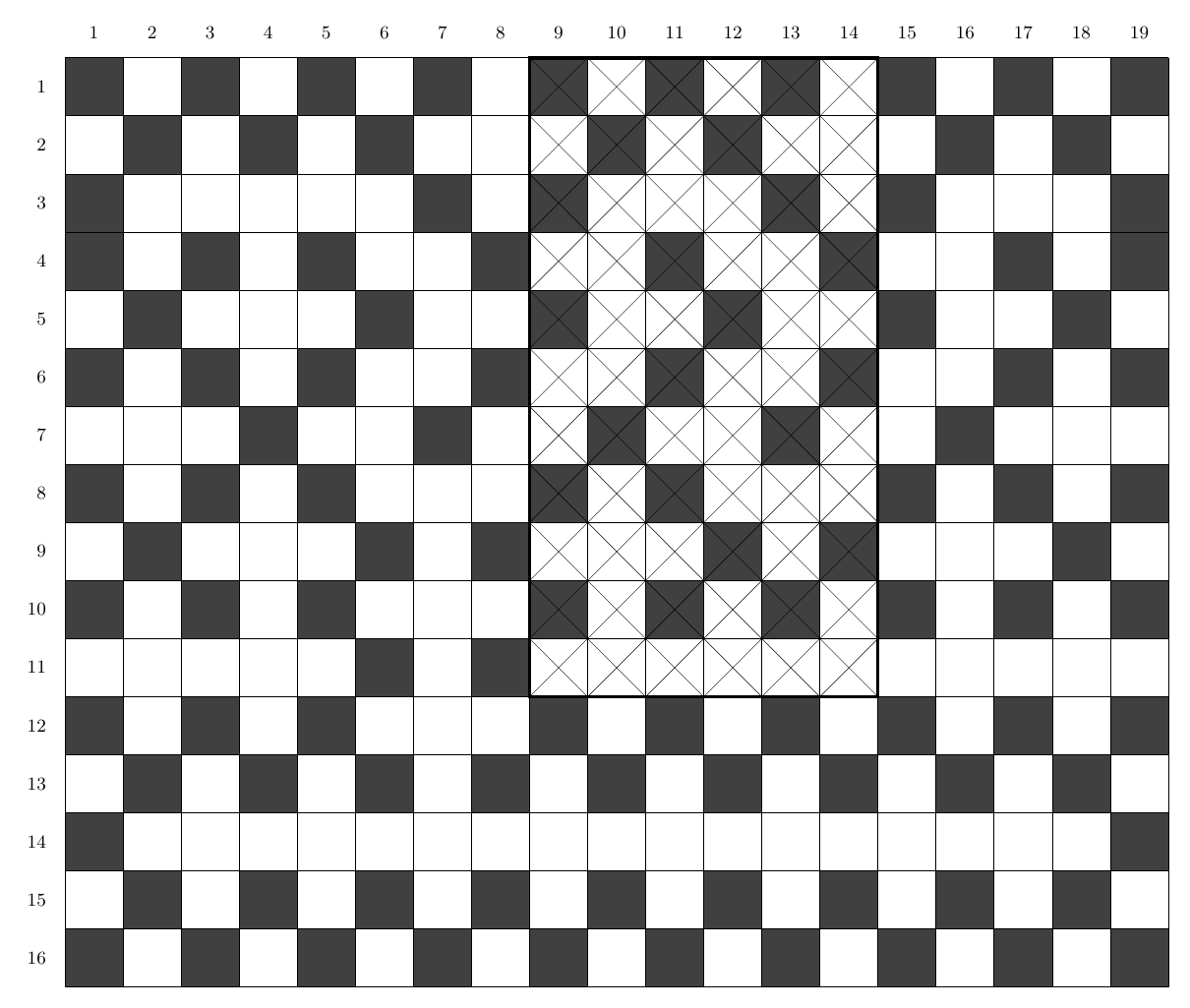}
    \caption{A minimum percolating set in the \mbox{$16\times 19$} grid, as per Theorem \ref{thm:rect44}; the block $C$ is marked with diagonal lines.}\label{figure:41}
\end{figure}

\begin{theorem}\label{thm:rect44}
    If $m,n\ge 7$, $m\equiv4\pmod{6}$, and $n\equiv 1\pmod{6}$, then 
    \[
    s_3(m,n)\le\frac{mn+m+n}{3}+1.
    \]
\end{theorem}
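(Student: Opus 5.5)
The bound equals $LB$ here. Since $m\equiv 4$ and $n\equiv 1\pmod 6$, we have $mn+m+n\equiv 4+4+1\equiv 0\pmod 3$, so $LB=\lceil\frac{mn+m+n+2}{3}\rceil=\frac{mn+m+n}{3}+1$. The task is therefore only to give an explicit percolating set of exactly this size. I would build it in the same style as Theorems~\ref{thm:rect01}, \ref{thm:rect03} and \ref{thm:rect13}: a special header of rows at the top, then copies of $A$ (or $A'$) stacked, then the odd cells of the last row.

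\textbf{Step 1: fix the sizes and count.} Write $m=6b+10$ and use $b=(m-10)/6$ copies of $A$ in rows $10$ through $m-1$. Put the odd cells of row $m$ in the infected set. Lemma~\ref{lem:AorA'} then shows that every cell from row $12$ down becomes infected, so only the first eleven rows need checking. Each copy of $A$ contributes $2(n-1)+4=2n+2$ cells and the last row contributes $(n+1)/2$. The budget left for the nine header rows is
\[
\frac{mn+m+n}{3}+1-\frac{(m-10)(2n+2)}{6}-\frac{n+1}{2}=\frac{19n+23}{6},
\]
which is an integer because $n\equiv 1\pmod 6$.

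\textbf{Step 2: choose the header rows.} I would start from the header of Theorem~\ref{thm:rect13}, which is eight rows of type odds; $2$ and $0,4$; $1,3$; $0,4$; $1,3$; evens; $\{1,n\}$; evens. To it I would add one more row of roughly $n/3$ cells, for instance the cells $\equiv 2\pmod 3$. The rows would then be adjusted near the ends, as in Theorems~\ref{thm:rect01} and \ref{thm:rect03}, until the row counts sum to $(19n+23)/6$.

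\textbf{Step 3: verify percolation.} Columns $5$ through $n-3$ of the first eleven rows would be viewed as $(n-7)/6$ copies of a $11\times 6$ block $C$. Within each copy I would check the internal infections, using the fact that rows $10$ and $12$ are fully infected after Lemma~\ref{lem:AorA'}. The aim is to show that the uninfected cells left in $C$ form a few disjoint horizontal paths whose endpoints match up with those of neighbouring copies. The left end cap (columns $1$ to $4$) and the right end cap (last three columns) are then checked by hand. Each end cap should start a cascade along one of these long paths, for example a cell with three infected neighbours that then infects the next path row, as in the proof of Theorem~\ref{thm:rect13}.

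\textbf{Step 4: small cases.} $n=7$ has no copies of $C$ and gets its own pattern, as the $n=7$ case of Theorem~\ref{thm:rect01} does. $m=10$ has no copies of $A$ and is handled by the remark after Theorem~\ref{thm:rect43}.

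The \textbf{main obstacle} is Step 2: finding the header, in particular the end columns and the ninth row. It has to meet the exact count while ensuring no uninfected path joins two boundary cells and no $2\times 2$ square is left uninfected, since otherwise percolation fails. I would search small instances such as $16\times 13$ and $16\times 19$, using the proof of Theorem~\ref{thm:rect13} as the template, and then read off the periodic block $C$ from them.
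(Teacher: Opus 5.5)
\textbf{This is a plan, not a proof: the explicit percolating set is never given.} Your Step~1 arithmetic is correct. The nine header rows have a budget of $(19n+23)/6$ cells, and the paper's header uses exactly that many. But for an upper bound of this type, the explicit set together with the check that it percolates \emph{is} the proof, and you never produce one. Step~2 describes a search: start from the header of Theorem~\ref{thm:rect13}, add a row of roughly $n/3$ cells, and adjust near the ends until the count comes out. Step~3 describes a verification with no configuration to verify. The $n=7$ pattern is also left unspecified. The $m=10$ case via Remark~\ref{rmk:green} is not automatic either, because it requires checking that the odd cells of the last row infect the last header row, and that depends on what the header row is.

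The paper supplies the missing object. Its first nine rows infect, in order:
\begin{enumerate}
\item all odd columns;
\item column $2$ and columns $\equiv 0,4\pmod 6$;
\item columns $\equiv 1,3\pmod 6$ except $3$;
\item columns $1,3,n$ and columns $\equiv 2\pmod 3$ except $2$;
\item column $2$ and columns $\equiv 0\pmod 3$ except $3$;
\item again columns $1,3,n$ and columns $\equiv 2\pmod 3$ except $2$;
\item columns $\equiv 1\pmod 3$ except $1,n$;
\item columns $1,n$ and columns $\equiv 3,5\pmod 6$;
\item columns $\equiv 0,2\pmod 6$.
\end{enumerate}
These rows are followed by copies of $A'$ (not $A$) and then the odd cells of row $m$. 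The periodic $11\times 6$ block occupies columns $9$ through $n-5$. In each block, the surviving uninfected cells are a path joining the row-$7$ cells of its first and sixth columns, plus row $11$; these chain across blocks and are ignited from column $8$. A separate pattern built on copies of $A$ handles $n=7$. This header resembles that of Theorem~\ref{thm:rect01} much more than that of Theorem~\ref{thm:rect13}.

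\textbf{Why adjusting until the count is right cannot replace a design.} Here $|S_0|=\frac{mn+m+n+3}{3}$. Refine the perimeter argument of Theorem~\ref{thm:rectlower} as in the proof of Theorem~\ref{width6}. Let $c$ be the number of adjacent infected pairs, and $d$ the number of cells that already have four infected neighbours when they become infected. The argument then gives $c+d\le 3$. The first and last columns have even length, so they alone force $c\ge 2$. Hence the whole configuration may contain at most one further adjacency or four-neighbour infection.

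For example, place a row of cells $\equiv 2\pmod 3$ next to any all-odd or all-even row, such as rows $1,6,8$ of the header of Theorem~\ref{thm:rect13} or the first row of $A$. This creates a vertical adjacency in every column $\equiv 5$ or $\equiv 2\pmod 6$ respectively. For $n\ge 13$ that pushes $c$ past $3$, so such a set cannot percolate.

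\textbf{A smaller error in Step~3.} Lemma~\ref{lem:AorA'} applied to rows $10$ through $m$ only guarantees that rows $12$ onward become infected. That row $10$ becomes fully infected must be derived from the header and block analysis, not taken from the lemma.
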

\begin{proof}
    To establish an upper bound for $n\ge 13$, the infected cells in the first nine rows are as follows: all odds; cell 2 and all cells $0,4\pmod{6}$; all cells $1,3\pmod{6}$ but cell $3$; cells $1,3,n$ and all cells $2\pmod{3}$ but $2$; cell $2$ and all cells $0\pmod{3}$ but $3$; cells $1,3,n$ and all cells $2\pmod{3}$ but $2$; all cells $1\pmod{3}$ but $1,n$; cells $1,n$ and all cells $3,5\pmod{6}$; all cells $0,2\pmod{6}$. The next $m-10$ rows consist of $(m-10)/6$ copies of $A'$. Finally in the last row, all odd cells are infected. See Figure~\ref{figure:41} for the example of a $16\times 19$ grid. This gives a total of
\begin{align*}
    &(n+1)/2+(n+2)/3+(n-1)/3+(n+5)/3+(n-1)/3+(n+5)/3\\&+(n-4)/3+(n+5)/3+(n-1)/3+(\frac{m-10}{6})(4(n-1)/2+4)+(m+1)/2\\
    &=11n/3+13/3+(\frac{m-10}{6})(2n+2)\\
    &=\frac{mn+m+n}{3}+1
\end{align*}
infected cells, so it suffices to check that every cell becomes infected. From Lemma~\ref{lem:AorA'}, we know that this is the case for all but the first $11$ rows (the case $m=10$ is handled in Remark~\ref{rmk:green}). Restricting our attention to the first $11$ rows, it is helpful to view columns $9$ through $n-5$ as $(n-13)/6$ copies of a repeating $11\times 6$ block which we call $C$.

Note that regardless of whether a copy of the block $C$ is to the right of another copy of block $C$ or whether it is the first copy and hence to the right of column $8$, the cells in rows $4$ and $9$ of the first column of $C$ become infected. Consequently, the cells in rows $9$ and $10$ in the second column of $C$ become infected. Regardless of whether a copy of block $C$ is to the left of another copy of $C$ or whether it is the last copy and hence to the left of column $n-4$, the cells in rows $5$ and $10$ in the sixth column of $C$ become infected. Additionally, in the first column of any block $C$ as well as in column $n-4$, the cells in rows $1$ and $3$ are initially infected and the cell in row $2$ becomes immediately infected by its neighbors above, below, and to the right. Thus for any copy of $C$, the cell in row $3$ of its sixth column becomes infected, and consequently the cells in rows $2$ and $1$ of that column do as well. Ultimately, the uninfected cells within a copy of $C$ form two disjoint paths: a path from row $7$ in the first column of $C$ to row $7$ in the sixth column of $C$ and a horizontal path consisting of the entire row $11$ of $C$. Note in particular that for each copy of $C$, the entire row $10$ is infected.

Because the endpoint in row $7$ in the sixth column of a block $C$ is adjacent to the endpoint in row $7$ of the first column of the next block $C$, these paths can be joined together, ultimately giving a path of uninfected cells from $(7, 9)$ to $(7, n-5)$. The only other uninfected cells in the range from column $9$ through column $n-5$ are those in row $11$.

Keeping in mind that the entire row $12$ is infected and that regardless of whether or not there are any copies of block $C$, column $9$ is the same, every cell in the first eight columns becomes infected. For $n=13$, this establishes that every cell in the first $n-5$ columns becomes infected. We now show that this still holds for larger $n$. If there are any copies of $C$, the fact that $(7, 8)$ is infected causes the endpoint in $(7, 9)$ to become infected and the infection to cascade down the path through $(7, n-5)$. Because $(11, 8)$ is infected along with the entirety of rows $10$ and $12$ for columns $9$ through $n-5$, this causes $(11, 9)$ to be infected, after which the infection cascades across row $11$ through column $n-5$. Thus, we have established that every cell in the first $n-5$ columns becomes infected.

Before considering  the fact that some cells in column $n-4$ have infected neighbors in column $n-5$, there are just $14$ cells in the last five columns which remain uninfected. These are the cells in row $11$ along with rows $4,6,7,9$ of column $n-4$ and rows $4,5,6,9,10$ of column $n-3$. As the entire column $n-5$ is already infected, this immediately leads to rows $4,7,9,11$ of column $n-4$ becoming infected. Consequently, $(6, n-4)$ becomes infected, which in turn infects $(6, n-3), (5, n-3), (4, n-3)$ in order. $(9, n-3)$ gets infected by virtue of its neighbors to the left, right, and above, which in turn infects $(10, n-3)$ and $(11, n-3)$. Then $(11, n-2)$ becomes infected, and the infection cascades to the end of the row.

\begin{figure}[H]
    \centering
    \includegraphics[scale=0.6]{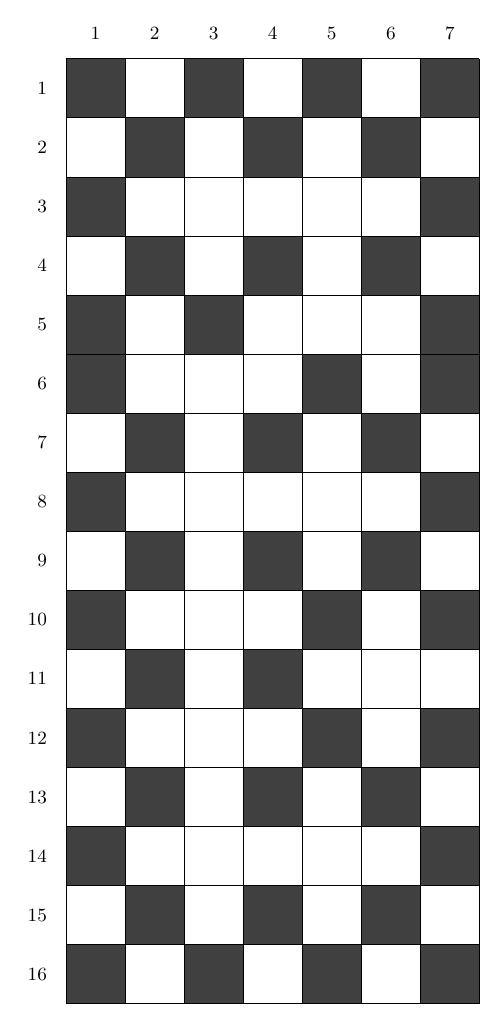}
    \caption{The $n=7$ case of Theorem \ref{thm:rect44}.}\label{figure:214b}
\end{figure}

If $n=7$, we use a different infection pattern. For row $1$, all odd columns are infected. For rows $2,4,7,9$, all even columns are infected. For rows $3$ and $8$, just columns $1,7$ are infected. For row $5$, columns $1,3,7$ are infected, while for row $6$, columns $1,5,7$ are infected. Rows $10$ through $m-1$ consist of $(m-10)/6$ copies of $A$ stacked on top of each other. Finally in the last row, all odd cells are infected. This gives a total of 
\[
2(4)+3(3)+2(2)+2+3+\left(\frac{m-10}{6}\right)(4(3)+2(2))+4=30+16\left(\frac{m-10}{6}\right)=\frac{8m+10}{3}=\frac{mn+m+n}{3}+1
\] infected cells, so it suffices to check that every cell becomes infected. For $m\ge 16$, then from Lemma~\ref{lem:AorA'}, we know that this is the case for all but the first $11$ rows. Restricting our attention to the first $12$ rows, where we know that the entire row $12$ is infected, it is easy to verify that every cell in the first $11$ rows becomes infected. For $m=10$, there are no copies of $A$ with which to apply Lemma~\ref{lem:AorA'}, but it is easy to verify that every cell becomes infected.
\end{proof}
\begin{theorem}\label{thm:rect43}
    If $m,n\ge 7$, $m\equiv 4\pmod{6}$, and $n\equiv 3\pmod{6}$, then\[
    s_3(m,n)\le \frac{mn+m+n+2}{3}.
    \]
\end{theorem}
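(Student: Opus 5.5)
I will prove the bound by an explicit construction in the style of Theorems~\ref{thm:rect01}, \ref{thm:rect03}, \ref{thm:rect13} and \ref{thm:rect44}. The set has three parts: a specially designed top band, a stack of copies of the block $A$ (or $A'$) from Lemma~\ref{lem:AorA'}, and a last row in which exactly the odd cells are infected.

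Since $m\equiv 4\pmod 6$, I take the top band to have $10$ rows, followed by $(m-10)/6$ copies of $A'$ in rows $11$ through $m-1$, and then the odd cells of row $m$. Lemma~\ref{lem:AorA'} then infects every cell outside the first $11$ rows, and in particular all of row $12$. The case $m=10$, with no copies of $A'$, is deferred to the remark after this theorem, as in the earlier proofs.

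Since $n\equiv 3\pmod 6$ and $n$ is odd, I will model the first ten rows on the $m\equiv 4$, $n\equiv 1$ pattern of Theorem~\ref{thm:rect44}, using its first nine rows, but adjust the right-hand end as in Theorem~\ref{thm:rect03}. The first row gets all odd cells, which accounts for the corners and the no-two-consecutive-uninfected-boundary-cells condition. The interior rows alternate residues $\{0,4\},\{1,3\}\pmod 6$ and $\{0,1,2\}\pmod 3$-type patterns with small modifications near columns $1$ to $8$ and in the last seven columns. The middle columns, from $9$ to $n-7$, are viewed as $(n-15)/6$ copies of a repeating $11\times 6$ block $C$.

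For the count, each middle row contributes about $n/3$ cells and the $A'$ blocks contribute $(2n+2)$ per six rows. I will tune the exceptional cells so that the total is
\[
\frac{11n}{3}+c+\frac{m-10}{6}(2n+2)=\frac{mn+m+n+2}{3},
\]
which forces $c=\frac{n+10}{3}-\frac{11n}{3}+\dots$. Concretely, the top $11$ rows must contain exactly $\frac{11n+12}{3}$ infected cells, and the band is adjusted (adding or removing single cells in the end columns) to achieve this.

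The verification follows the template of the earlier proofs. First, inside each copy of $C$ the internal infections leave only disjoint uninfected paths: one running across the block in some fixed row, and the bottom row $11$. Second, adjacent copies of $C$ splice these paths into a single long path. Third, the first eight columns fill in completely, using the fully infected row $12$. The cascade then runs along the spliced path and along row $11$. Finally, the last seven columns are checked by hand, and I will arrange that their leftover uninfected cells form paths attached to an already infected neighbour. Small $n$ needs separate patterns: $n=9$ (and possibly $n=15$, with no copies of $C$) gets an ad hoc pattern built from $A$ blocks, as in the $n=7$ and $n=9$ cases earlier, with counts checked directly.

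The main obstacle is designing the right-end gadget, the last seven columns of the top band. It has to hit the exact count $\frac{mn+m+n+2}{3}$, one less than the bound of Theorem~\ref{thm:rect44} relative to $LB$ structure. It also has to ensure there are no two adjacent infected cells beyond those forced, since the bound is tight with $LB$ and Remark~\ref{WhenBoundTight} leaves little slack. I would first try transplanting the last-seven-column pattern of Theorem~\ref{thm:rect03} onto the $10$-row band of Theorem~\ref{thm:rect44} and repairing it locally.
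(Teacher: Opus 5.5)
Your proposal has a genuine gap: it never specifies the initially infected set. For an upper bound of this kind, the explicit construction together with its verification \emph{is} the proof, and you leave every substantive part open. This includes the ``small modifications near columns $1$ to $8$'' and the ``right-end gadget'' in the last seven columns. It also includes the constant in the count: your ``$c=\frac{n+10}{3}-\frac{11n}{3}+\dots$'' is never resolved, and it must simply be $c=4$. Finally, the ad hoc patterns for $n=9$ and possibly $n=15$ are not given, and the verification paragraph only describes what you intend the cascade to do.

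You correctly note that here $\frac{mn+m+n+2}{3}$ is an integer equal to $LB$. By Remark~\ref{WhenBoundTight}, the set may then contain no adjacent infected pair beyond the two forced ones in columns $1$ and $n$, and no uninfected cell with four infected neighbours. That is exactly why ``transplant the end of Theorem~\ref{thm:rect03} onto the band of Theorem~\ref{thm:rect44} and repair locally'' cannot be taken on faith, and nothing in the proposal shows such a repair exists.

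The part you do specify is inconsistent. A $10$-row band, then $(m-10)/6$ six-row blocks, then row $m$ totals $m+1$ rows. Equivalently, rows $11$ through $m-1$ number $m-11\equiv 5\pmod 6$, so they cannot be tiled by blocks. And if the blocks started at row $11$, Lemma~\ref{lem:AorA'} would only give infection outside the first $12$ rows, so row $12$ would not be known to be infected.

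The workable layout, which the paper uses, is a $9$-row band, $(m-10)/6$ copies of $A$ in rows $10$ through $m-1$, and the odd cells of row $m$. The band plus row $m$ must then hold $\frac{11n}{3}+4$ cells. The paper's band is:
\begin{itemize}
\item row $1$: all odd cells;
\item rows $2$ and $9$: cells $\equiv 2,4\pmod 6$;
\item rows $3$ and $8$: cell $n$ and cells $\equiv 1,5\pmod 6$;
\item row $4$: cell $1$ and cells $\equiv 0\pmod 3$;
\item row $5$: cells $\equiv 2\pmod 3$;
\item row $6$: cell $n$ and cells $\equiv 1\pmod 3$;
\item row $7$: cells $\equiv 0\pmod 3$ except $n$.
\end{itemize}

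This band is $6$-periodic in the columns with only end modifications, and has exactly the two forced adjacent pairs $(3,1),(4,1)$ and $(3,n),(4,n)$. Because of the periodicity, columns $5$ to $n-5$ form $(n-9)/6$ copies of an $11\times 6$ block. In each copy the leftover uninfected cells are a path in row $7$ plus row $11$. These paths splice with a path from $(7,1)$ to $(7,4)$, row $11$ fills from rows $10$ and $12$, and the last five columns fill directly. The single remaining path then cascades from $(7,n-5)$. So no elaborate left or right gadget is needed, and no separate small-$n$ pattern either: $n=9$ is just the case of zero blocks.
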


\begin{figure}[H]
    \centering
    \includegraphics[scale=0.6]{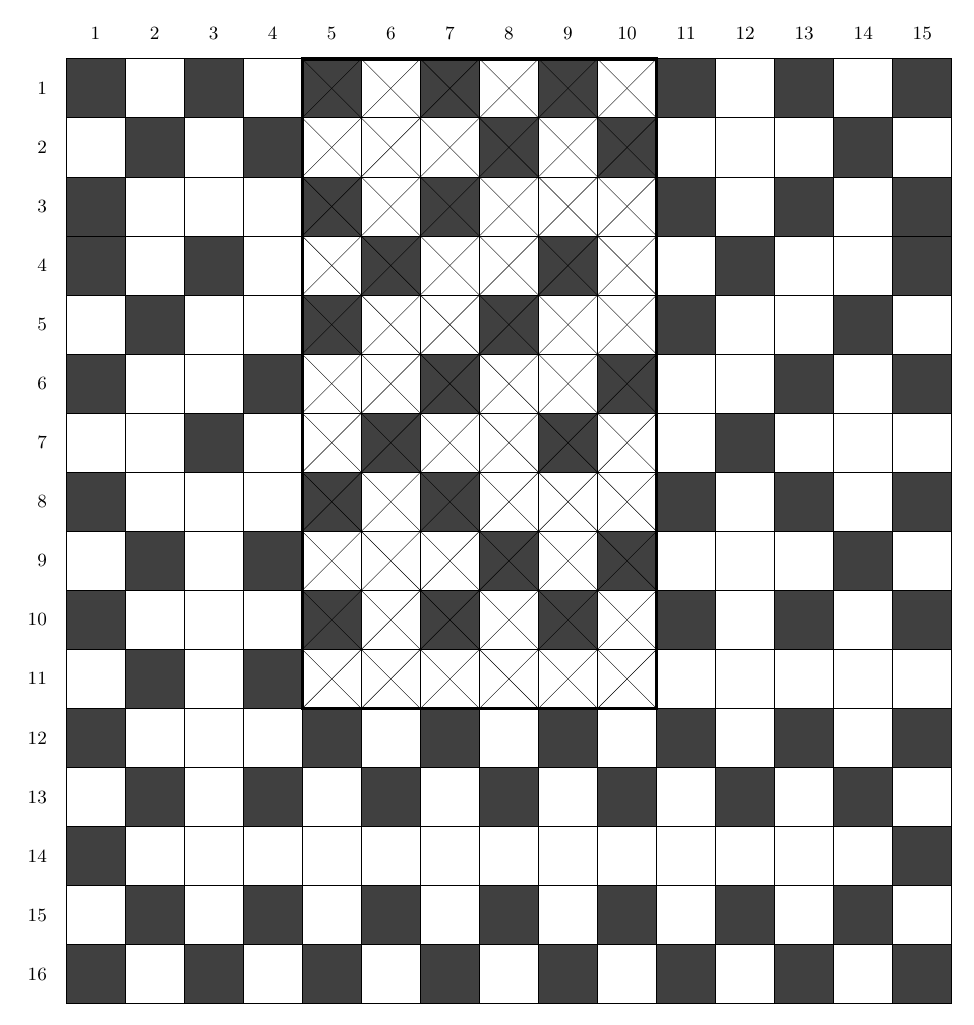}
    \caption{A minimum percolating set in the \mbox{$16\times 15$} grid, as per Theorem \ref{thm:rect43}; the block $C$ is marked with diagonal lines.}\label{figure:43}
\end{figure}

\begin{proof}
  To establish the upper bound, the infected cells in the first nine rows are as follows: all odds; all cells $2,4\pmod{6}$; cell $n$ and all cells $1,5\pmod{6}$; cell $1$ and all cells $0\pmod{3}$; all cells $2\pmod{3}$; cell $n$ and all cells $1\pmod{3}$; all cells $0\pmod{3}$ but $n$; cell $n$ and all cells  $1,5\pmod{6}$; all cells $2,4\pmod{6}$. The next $m-10$ rows consist of $(m-10)/6$ copies of $A$. Finally, in the last row, the odds cells are infected. See Figure~\ref{figure:43} for the example of a $16\times 15$ grid. This gives a total of \begin{align*}&(n+1)/2+n/3+2(n/3+1)+n/3+(n/3+1)+(n/3-1)+(n/3+1)+n/3\\&+\frac{m-10}{6}(4(n-1)/2+2(2))+(n+1)/2\\
&=11n/3+4+\frac{m-10}{6}(2n+2)=\frac{mn+m+n+2}{3}\end{align*} infected cells, so it suffices to check that every cell becomes infected. From Lemma~\ref{lem:AorA'}, we know that this is the case for all but the first $11$ rows (the case $m=10$ is handled in Remark~\ref{rmk:green}). Restricting our attention to the first $11$ rows, it is helpful to view columns $5$ through $n-5$ as $(n-9)/6$ copies of a repeating $11\times 6$ block which we call $C$.

Note that regardless of whether a copy of the block $C$ is to the right of another copy of block $C$ or whether it is the first copy and hence to the right of column $4$, the cells in rows $2$ and $9$ of the first column of $C$ become infected. Similarly, regardless of whether a copy of the block $C$ is to the left of another copy of $C$ or whether it is the last copy and hence to the left of column $n-4$, the cells in rows $1$ and $10$ of the sixth column of $C$ become infected. Ultimately, the uninfected cells within a copy of $C$ form two disjoint paths: a path from row $7$ in the first column of $C$ to row $7$ in the sixth column of $C$ and a horizontal path consisting of the entire row $11$ of $C$. Note that in particular for each copy of $C$, the entire row $10$ is infected.

Because the endpoint in row $7$ in the sixth column of a block $C$ is adjacent to the endpoint in row $7$ of the first column of the next block $C$, these paths can be joined together, ultimately giving a path of uninfected cells from $(7, 5)$ to $(7, n-5)$. The only other uninfected cells in the range from column $5$ through column $n-5$ are those in row $11$.

Keeping in mind that the entire row $12$ is infected and that regardless of whether or not there are any copies of block $C$, column $5$ is the same, every cell in the first four columns becomes infected besides a path from $(7, 1)$ to $(7, 4)$. If there are any copies of $C$, the endpoint in $(7, 4)$ is adjacent to the endpoint in $(7, 5)$, so these paths can be merged to a single path of uninfected cells from $(7, 1)$ to $(7, n-5)$.

Regardless of whether there are any copies of $C$, $(11,5)$ becomes infected. Because the entire row $12$ is infected, along with row $10$ for columns $5$ through $n-5$, the infection cascades across row $11$ through column $n-5$. Additionally, $(2, n-4), (6, n-4)$, and $(9, n-4)$ become infected. Consequently, rows $1,2,9,10$ of column $n-3$ become infected and $(7, n-4)$ becomes infected by virtue of its neighbors above, below, and to the right. The only remaining uninfected cells in the last five columns are in row $11$ and get infected by virtue of $(11, n-5)$ and the entire rows $10$ and $12$ being infected.

The remaining uninfected cells now from a single path from $(7, 1)$ to $(7, n-5)$. The endpoint $(7, n-5)$ gets infected by virtue of its neighbors to the left, right, and above, and the infection then cascades throughout the path.
\end{proof}

\begin{remark}\label{rmk:green} We now deal with the leftover small cases from the preceding five proofs where the number of rows is a particular small value, $m^*$ (e.g., $m^*=12$ for Theorem~\ref{thm:rect01}) such that there are no copies of the blocks $A$ and $A'$. In this case, we cannot perform the same reduction using Lemma~\ref{lem:AorA'} because as written, it would reduce the problem to $m^*+1$ rows, more than the original number. However, the stated construction with the same proof implies the desired result for $m^*$ rows.

When $m=m^*$, all the odd cells in row $m^*$ are initially infected. In each of the five relevant constructions (one from each of Theorems \ref{thm:rect01} - \ref{thm:rect43}), one can check that this is sufficient to infect row $m^*-1$ within two steps. As the first $m^*-1$ rows  were shown to become infected in the written proof, they will still become infected when row $m^*-1$ is replaced by a completely infected row. Once the first $m^*-1$ rows are infected, the remaining uninfected cells in the bottom row become infected by virtue of their neighbors to the left, right, and above, and thus every cell is infected, as desired.

\end{remark}

This completes the proof of \Cref{thm:rectall}.

\section{Tori}
Our two main results on tori are
\torusexact*
and
\torusbounds*

First, we establish the lower bound which applies for all $m,n\ge 1$. This result is \cite[Theorem 7]{FLL04}, but we include the proof here for completeness.  Throughout this section, we will think of our indices as looping around so that when we talk about vertex $m+1$ in a row of length $m$, we in fact mean vertex $1$.
\begin{theorem}\label{thm:toruslower}
    For all $m,n$,
    \[
    t_3(m,n)\ge \frac{mn+1}{3}.
    \]
\end{theorem}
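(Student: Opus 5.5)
We follow the perimeter argument from the proof of Theorem~\ref{thm:rectlower}, adapted to the torus $C_m\square C_n$. Here every vertex has degree $4$ and there is no boundary. This makes the counting simpler, but it also removes the boundary terms that gave the extra $m+n$ in the rectangle bound. Let $S_0$ be a percolating set and track the perimeter $P(S)=|E(S,\overline{S})|$ as infection spreads.

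\textbf{Initial perimeter.} Every vertex has degree $4$, so $P(S_0)\le 4|S_0|$.

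\textbf{Each infection step.} When an uninfected vertex becomes infected, at least three of its edges go from infected--uninfected to infected--infected. At most one edge goes from uninfected--uninfected to infected--uninfected. So $P$ drops by at least $2$ with each newly infected vertex. Exactly $mn-|S_0|$ vertices get infected, and at the end $P(V)=0$. This gives $P(S_0)\ge 2(mn-|S_0|)$.

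\textbf{The extra unit, which is the key step.} The two inequalities above only give $6|S_0|\ge 2mn$, that is, $|S_0|\ge mn/3$. To reach $(mn+1)/3$ we need to gain a little more. Consider the very last vertex $v$ to become infected (assuming $S_0\neq V$; the case $S_0=V$ is trivial). At that moment all four of its neighbours are infected, so infecting $v$ reduces the perimeter by $4$ rather than $2$. The accounting then becomes $P(S_0)\ge 2(mn-|S_0|)+2$.

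To make this rigorous, process the infections one vertex at a time in any order consistent with the bootstrap dynamics, for instance infecting vertices of $\mathcal{I}_{t+1}\setminus\mathcal{I}_t$ one by one. Each such single infection still has at least $3$ infected neighbours, and the final one has $4$.

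\textbf{Conclusion.} Combining the bounds,
\[
4|S_0|\ge P(S_0)\ge 2mn-2|S_0|+2,
\]
so $6|S_0|\ge 2mn+2$, which gives $|S_0|\ge (mn+1)/3$.

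The only subtle point is this last-vertex improvement. It is the analogue of Remark~\ref{WhenBoundTight}: a cell that becomes infected with four infected neighbours costs an extra $2$ in the perimeter. On the torus, such a cell is forced to exist, namely the final vertex infected. Everything else is routine degree counting.
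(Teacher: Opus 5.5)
Your proposal is essentially the paper's proof: the same perimeter count, using $P(S_0)\le 4|S_0|$, a drop of at least $2$ per newly infected vertex, and an extra $2$ from the last infected vertex, which has all four neighbours infected. Your explicit one-at-a-time ordering of simultaneous infections is a small piece of rigor the paper leaves implicit; it matters when two adjacent cells are infected in the final round, since then neither had four infected neighbours beforehand.
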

\begin{proof}
    For a set of infected vertices,  $S$, define the perimeter $P(S)$ as the number of edges between an infected vertex and an uninfected vertex. When a vertex of the $4$-regular graph $C_m\square C_n$ is infected via the $3$-neighbor bootstrap percolation process, at least three such edges become edges between two infected vertices while at most one edge is newly between an infected and an uninfected vertex. Thus, $P(S)$ decreases by at least $2$ when a new vertex is infected. Furthermore, the last vertex to become infected previously had four infected neighbors, so its infection decreases $P(S)$ by $4$.

    If $S_0$ is the initial set of infected vertices, then $P(S_0)\le 4|S_0|$. Meanwhile, after everything has been infected, the perimeter is $0$. Thus, unless all cells are initially infected,
    \begin{align*}
        P(S_0)-2(mn-|S_0|-1)-4&\ge 0\\
        4|S_0|-2mn+2|S_0|+2-4&\ge 0\\
        6|S_0|&\ge 2mn+2\\
        |S_0|&\ge\frac{mn+1}{3}.
    \end{align*}
\end{proof}

The behavior when one dimension is at most four is different and we handle it separately.

\begin{figure}[H]
    \centering
    \includegraphics[width=\linewidth]{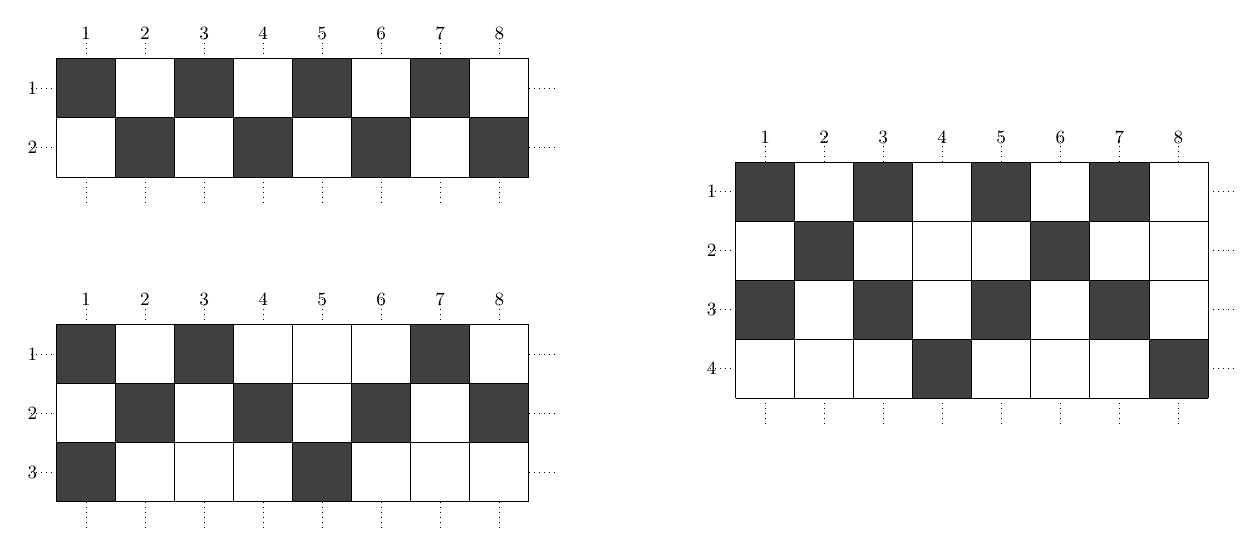}
    \caption{A minimum percolating set in the $2\times 8$, $3\times 8$, and $4\times 8$ tori, as per Theorem \ref{thm:smalltorus}.}\label{fig:smalltori}
\end{figure}

\begin{theorem}\label{thm:smalltorus}
    For $m\le 4$ and $n\ge m$:
    \begin{itemize}
        \item $t_3(1,n)=n$,
        \item $t_3(2,2)=4$ and $t_3(2,n)=2\lceil{n/2\rceil}$ otherwise,
        \item $t_3(3,n)=n+1$,
        \item $t_3(4,n)=\lceil{3n/2\rceil}$.
    \end{itemize}
\end{theorem}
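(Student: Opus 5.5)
Each item needs a lower bound and a matching construction. The tools are the percolation criterion (Lemma 3.1 of \cite{DNR23}) for tori of small width and direct perimeter counting. Note the degeneracies: in $C_1\square C_n$ each vertex is adjacent only to its row neighbours (loops and multi-edges aside, degree at most $2$), and in $C_2\square C_n$ the vertical "cycle" $C_2$ is a single edge, so each vertex has degree $3$.

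\emph{Widths $1$ and $2$.} For $m=1$ every vertex has degree less than $3$, so all $n$ must be infected, giving $t_3(1,n)=n$. For $m=2$ the graph is $3$-regular, so by the lemma (with $k=3$, $\Delta\le 4$) $S$ percolates if and only if every component of $G-S$ is a tree with at most one vertex of degree $3$.
\begin{itemize}
\item For $n=2$ the graph is $C_4$, where every vertex has degree $2$, so all $4$ vertices are needed.
\item For larger $n$, a percolating set cannot leave two adjacent columns both entirely uninfected. If it did, those four vertices would form a $4$-cycle in $G-S$, or, if $n$ is small, components with too many degree-$3$ vertices.
\item A more useful count is that each column contributes vertices to $G-S$, and uninfected vertices in the same column are adjacent. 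Counting the degree-$3$ vertices of $G-S$ shows that at least about half of the vertices must be infected: $|S|\ge n$, with $|S|\ge n+1$ when $n$ is odd by a parity argument around the cycle.
\item For the construction, infect a checkerboard pattern; for odd $n$ add one extra vertex. The uninfected vertices are then isolated, so percolation is immediate.
\end{itemize}

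\emph{Widths $3$ and $4$.} Here $C_3\square C_n$ and $C_4\square C_n$ are $4$-regular. I would use the perimeter argument of Theorem~\ref{thm:toruslower} combined with structural counting. Every column ($C_m$) and every $2\times2$ square must contain an infected cell, and the uninfected set must induce a forest, so in particular no column can be entirely uninfected. The plan has three steps.
\begin{itemize}
\item For $m=3$, show that each column contains at least one infected cell, and that some column must contain two. A column with exactly one infected cell has an uninfected vertical edge, and two consecutive such columns create a cycle in $G-S$ or block infection.
\item Refine this into a discharging argument: assign each column weight equal to its number of infected cells, and show that the total is at least $n+1$.
\item Construct a matching set by infecting one cell per column along a diagonal (cells $(j \bmod 3,\ j)$) plus one extra cell, and check that the uninfected set forms a single path in which exactly one vertex is adjacent to the extra cell, so the infection cascades.
\end{itemize}
For $m=4$, the uninfected vertices in a column induce a subgraph of $C_4$, so at least one cell per column is infected. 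Each pair of adjacent columns must contain at least $3$ infected cells: otherwise the $2\times 4$ band of uninfected cells would contain a cycle, or two columns each with a single infected cell would leave uninfected $3$-paths joined by horizontal edges into a cycle. Summing over the $n$ cyclically adjacent pairs gives $2|S|\ge 3n$, hence $|S|\ge\lceil 3n/2\rceil$. For the upper bound, alternate columns with $1$ and $2$ infected cells in a staggered pattern, as in Figure~\ref{fig:smalltori}, with a patch for odd $n$, and verify percolation by exhibiting the uninfected set as a single path started by one vertex with $3$ infected neighbours.

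The main obstacle is the $m=3$ lower bound $n+1$ (rather than $n$) and the pair-counting claim for $m=4$. Both require a careful enumeration of the local configurations of two adjacent columns, which is where I would expect to spend the effort. The parity patch for odd $n$ in the $m=4$ construction is a secondary check.
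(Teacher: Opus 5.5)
\textbf{Main gap: the $m=3$ lower bound.} Your plan rests on the claim that two consecutive columns, each containing exactly one infected cell, ``create a cycle in $G-S$ or block infection.'' This is false. If the infected cells are $(1,j)$ and $(2,j+1)$, the four uninfected cells of these two columns induce the path $(2,j),(3,j),(3,j+1),(1,j+1)$. So these two columns alone force neither a cycle nor an obstruction.

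The claim also contradicts the statement you are proving. A set of $n+1$ cells meeting every column has $n-1$ columns with exactly one infected cell, so for $n\ge 3$ it contains two consecutive such columns. Your own diagonal construction consists almost entirely of them. Hence no discharging argument built on this claim can reach $n+1$. There is in fact no obstacle: the perimeter bound of Theorem~\ref{thm:toruslower} gives $3|S|\ge 3n+1$, hence $|S|\ge n+1$. This one-line deduction is exactly the paper's argument.

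\textbf{The $m=2$ lower bound is only gestured at.} ``Counting the degree-$3$ vertices of $G-S$'' appears to read the lemma's degree condition as degree in $G-S$, which would permit uninfected paths. The degree is in $G$. Since $C_2\square C_n$ ($n\ge 3$) is $3$-regular, every component of $G-S$ is a single vertex, so no two adjacent cells are uninfected. Each row, a copy of $C_n$, therefore contains at least $\lceil n/2\rceil$ infected cells, giving $|S|\ge 2\lceil n/2\rceil$, as in the paper.

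\textbf{The $m=3$ construction works, but not as described.} With a suitable extra cell the diagonal percolates, but the uninfected set is not a path with one vertex next to the extra cell. It depends on $n \bmod 3$:
\begin{itemize}
\item For $3\mid n$, the uninfected cells form a single $2n$-cycle, and any extra cell on it works.
\item For $n\equiv 2\pmod 3$, the wrap-around gives a $(2n-2)$-cycle with two pendant vertices, so the extra cell must lie on the cycle.
\item For $n\equiv 1\pmod 3$, it gives a theta graph with branch vertices $(3,1)$ and $(2,n)$. The extra cell must be one of these, after which $G-S$ is a tree, not a path. A tree suffices since $G$ is $4$-regular.
\end{itemize}
The paper instead infects, in row $1$, column $1$ and the columns $\equiv 3\pmod 4$; in row $2$, the even columns; and in row $3$, the columns $\equiv 1\pmod 4$. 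It then verifies percolation by a column-by-column cascade.

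\textbf{Widths $1$ and $4$.} These match the paper. For $m=4$, your observation that two single-infected columns share at least two uninfected rows is a tidy replacement for the paper's case check. It means their uninfected $3$-paths are joined by two horizontal edges, which closes a cycle.
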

\begin{proof}
If $m=1$, no cell has three neighbors, so all $n$ need to be infected.

 If $m=2$, we note that each cell has degree three, and so no adjacent cells can be uninfected.  In particular, a row cannot have consecutive cells uninfected, so there are at least $2\lceil{n/2\rceil}$ infected cells. In fact, this is sufficient for $n>2$. When $n\ge 4$ is even, infect all the odd columns in the first row and all the even columns in the second row. When $n$ is odd, infect these cells along with $(2,1)$. In both cases, every initially uninfected cell has three infected neighbors and instantly becomes infected. When $n=2$, no cell has three neighbors, so we must infect all $4$.

If $m=3$, the lower bound of $n+1$ follows from Theorem~\ref{thm:toruslower}. This can be achieved by infecting the first column and all columns $3\pmod{4}$ in the first row, all even columns in the second row and all columns $1\pmod{4}$ in the third row. Every uninfected cell in row $2$, except $(2, n)$ for odd $n$ has three infected neighbors and instantly becomes infected. Then $(2, n)$ becomes infected if it has not already. Now each column besides column $1$ still has one or two still uninfected cells. We will show that these get infected by inducting on the index of the lowest column still containing uninfected cells. If such a column is odd, it only has one uninfected cell which has infected neighbors above, below, and to the left, so becomes infected. If such a column is $2\mod{4}$ and is not the last column, the uninfected cell in row $1$ has infected neighbors to the right, left, and below, so becomes infected. This in turn infects the uninfected cell in row $3$ which now has infected neighbors above, below, and to the left. Similarly, if such a column is $0\mod{4}$ and is not the last column, the uninfected cell in row $3$ has infected neighbors to the right, left, and above, so becomes infected. This in turn infects the uninfected cell in row $1$ which now has infected neighbors above, below, and to the left. Finally, once we reach the last column, the remaining uninfected cells each have infected neighbors to the left, to the right, and in row $2$, so become infected.

For $m=4$, we first show that each pair of consecutive columns has at least three infected cells. Then, by averaging, at least $3/8$ of the grid is initially infected for a lower bound of $(3/8)(4n)=3n/2$.  Suppose for the sake of contradiction that there are two consecutive columns that have at most two infected cells. In order to percolate, each column must have at least one infected cell. Without loss of generality, the infected cell in the first column is in row $4$. Each $2\times2$ square must have at least one infected cell, so the infected cell in the second column must be in row $1$ or $2$ to avoid an uninfected $2\times 2$ in those rows. However the infected cell must also be in row $2$ or $3$ to avoid an uninfected $2\times 2$ in those rows, so it is necessarily in row $2$. Then the six uninfected cells form a cycle where each has two uninfected neighbors: $(3,1)$ to $(2,1)$ to $(1,1)$ to $(1,2)$ to $(4,2)$ to $(3,2)$ then back to $(3,1)$.

The lower bound of $\lceil{3n/2\rceil}$ can be obtained by infecting the odd columns in rows $1$ and $3$, columns in row $2$ that are $2\pmod{4}$, and columns in row $4$ that are $0\pmod{4}$. The uninfected cells in rows $1$ and $3$ have three infected neighbors and are infected instantly. Then every remaining uninfected cell has infected neighbors above and below, which along with the fact that at least one cell in each of rows $2$ and $4$ is initially infected, is sufficient for the infection to cascade across rows $2$ and $4$.
\end{proof}
We remark that when $m=4$, a higher density of cells need to be initially infected than for $m=3$.

The next result relates $t_3$ to $s_3$ and we will apply it to several cases.

\begin{figure}[H]
    \centering
    \includegraphics[scale=0.6]{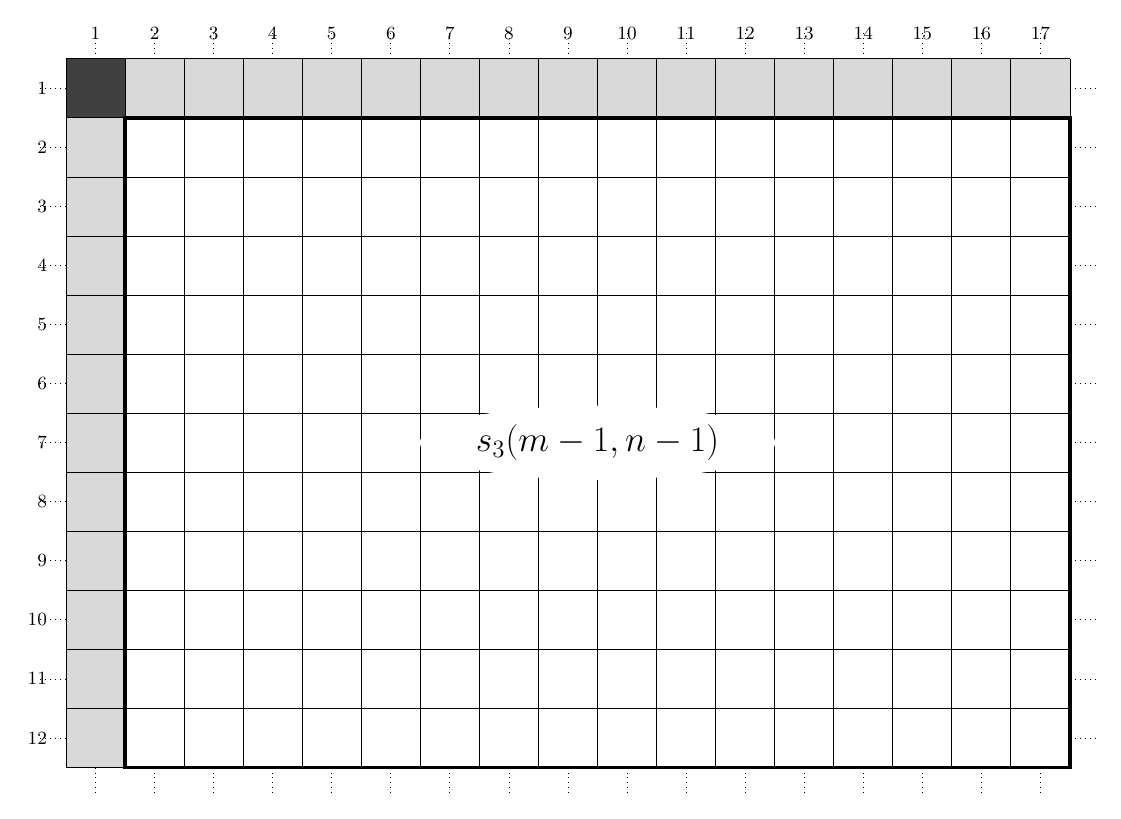}
    \caption{The infection pattern from Theorem \ref{thm:recttotorus}.}\label{fig:1recttori}
\end{figure}

\begin{theorem}\label{thm:recttotorus}
For $m,n\ge 3$,
    \[
    t_3(m,n)\le 1+s_3(m-1,n-1).
    \]
\end{theorem}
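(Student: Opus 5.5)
The plan is to mimic the Four Rectangle Trick (Theorem~\ref{4rt}) on the torus. We cut the torus along one row and one column, leaving an $(m-1)\times(n-1)$ rectangle, and put one extra infected cell at the crossing of the cut row and cut column.

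Concretely, identify $C_m\square C_n$ with cells $(i,j)$, $1\le i\le m$, $1\le j\le n$. The cells with $i\le m-1$ and $j\le n-1$ form a copy $R$ of $P_{m-1}\square P_{n-1}$. Inside $R$ we infect a percolating set of size $s_3(m-1,n-1)$ for the rectangle, and we also infect the single cell $(m,n)$. This gives $1+s_3(m-1,n-1)$ infected cells, so it remains to show that this set percolates on the torus.

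Step one is monotonicity. Every cell of $R$ has, in the torus, a superset of its neighbors in $R$ (the grid edges are all present, and boundary cells of $R$ gain extra neighbors in row $m$ or column $n$). So the rectangle's infection sequence is still valid, and all of $R$ becomes infected. Step two concerns the remaining uninfected cells, which are the cells of row $m$ other than $(m,n)$ and the cells of column $n$ other than $(m,n)$. These form two paths, $(m,1),\dots,(m,n-1)$ and $(1,n),\dots,(m-1,n)$.

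Consider the cell $(m,1)$. It has neighbors $(m-1,1)$ and $(1,1)$, both in $R$ and hence infected, together with $(m,n)$, which is infected. That is three infected neighbors, so it becomes infected. Inductively, $(m,j)$ has infected neighbors $(m-1,j)$, $(1,j)$ and $(m,j-1)$, so the infection cascades along row $m$ up to $(m,n-1)$. Symmetrically, $(1,n)$ has infected neighbors $(1,n-1)$, $(1,1)$ and $(m,n)$, and the infection cascades down column $n$. Hence the whole torus becomes infected.

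The only point needing care is that $m,n\ge 3$. This guarantees that the cells in rows $1$ and $m-1$ of $R$ are distinct from row $m$, and that the wrap-around neighbors are genuinely distinct cells, so no degree collapses. This is routine, so I expect no real obstacle beyond stating the monotonicity step cleanly.
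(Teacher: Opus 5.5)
Your proposal is correct and is essentially the paper's proof: the paper infects $(1,1)$ and places a percolating set for $P_{m-1}\square P_{n-1}$ on rows $2,\dots,m$ and columns $2,\dots,n$. It then lets the infection cascade along the cut row and cut column, and $m,n\ge 3$ is used exactly as you use it, to make the two wrap-around neighbors distinct. Your placement of the extra cell at $(m,n)$ is just a relabeling of this, and your explicit monotonicity remark spells out what the paper leaves implicit.
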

\begin{proof}
Initially infect $1+s_3(m-1,n-1)$ cells as follows. Infect $(1,1)$ and no other cells in the first row or first column. The remaining cells form a $(m-1)\times (n-1)$ rectangle. Within this rectangle, infect $s_3(m-1,n-1)$ cells in a pattern that would percolate on $P_{m-1}\square P_{n-1}$.

After this pattern of $s_3(m-1,n-1)$ cells percolates on the $(m-1)\times (n-1)$ rectangle, the only remaining uninfected cells are in the first row or the first column. For any cell in the first row, the cells above and below (which are distinct because $m-1\ge 2$) are already infected. $(1,2)$ has an additional infected neighbor so it becomes infected. Thus $(1,3)$ now has an additional infected neighbor, and the process cascades, infecting one cell at a time until the entire first row is infected.

Similarly, for any cell in the first column, the cells to the left and right (which are distinct because $n-1\ge 2$) are already infected. $(2,1)$ has an additional infected neighbor so it becomes infected. Thus $(3,1)$ now has an additional infected neighbor, and the process cascades until the entire first column is infected.
\end{proof}

Now to complete the proof of \Cref{thm:torusexact}, we split into several cases. Clearly, $t_3(m,n)=t_3(n,m)$, so some cases are implicitly included by symmetry.

\begin{theorem}\label{torusoddmult3}
    For $m,n\ge 5$, if $m\equiv 1\pmod{2}$ and $n\equiv 0\pmod{3}$, $t_3(m,n)\le\frac{mn}{3}+1$.
\end{theorem}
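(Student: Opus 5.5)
The plan is a direct construction with exactly $mn/3+1$ cells. The earlier reductions are not quite strong enough. With Theorem~\ref{thm:recttotorus} we would use $s_3(m-1,n-1)$, and here $n-1\equiv 2\pmod 3$, so Theorem~\ref{thm:oldcases} gives $s_3(m-1,n-1)=LB$. But whether $n$ is odd (both sides even) or even (one side even), that $LB$ evaluates to $\lceil (mn+1)/3\rceil=mn/3+1$ or $(mn+3)/3$. So we only get $t_3(m,n)\le mn/3+2$, which is off by one. Instead I would build a ``diagonal'' set of density exactly $1/3$ in every row, plus one extra seed cell.

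\textbf{Construction.} Since $3\mid n$, each row gets exactly $n/3$ infected cells in a period-$3$ pattern: row $i$ is infected in the columns $j\equiv s_i\pmod 3$. Consecutive rows are shifted by one, $s_{i+1}=s_i+1$, except across one ``defect'' interface where the torus wraps in the $m$-direction. That defect is forced, because $m$ is odd and need not be divisible by $3$. We then add one extra cell near the defect, for a total of $m\cdot n/3+1$.

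Note that the pure pattern $\{(i,j): j\equiv i \pmod 3\}$ on a consistent torus is closed: every uninfected cell has exactly two infected neighbours. So both the defect and the extra cell are genuinely needed to start the process.

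\textbf{Propagation lemma.} Suppose row $i$ is completely infected and row $i+1$ has its pattern shifted by one relative to row $i$. Consider the uninfected cell of row $i+1$ that sits in the column of an infected cell of row $i+2$. It has an infected neighbour above, an infected horizontal neighbour in its own row, and an infected neighbour below, so it becomes infected. Every other empty cell in the row is adjacent to a pattern cell, so the infection then sweeps along the $x\,.\,.\,x\,.\,.$ pattern and fills row $i+1$. The same argument runs upward with the shift reversed. Hence a single full row spreads to the whole torus, provided every non-defect interface has the correct relative shift in the direction of travel. I would choose the direction so that one sweep goes downward and one upward from the starting row, with the two sweeps meeting at the defect interface.

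\textbf{Main obstacle: creating the first full row.} The extra cell, together with the rows adjacent to the defect (whose offset differs from the generic $+1$), must fill one entire row. I would split by $m\bmod 3$: the defect shift is $1-m\equiv 0$ or $2\pmod 3$ in the two cases, and the case $m\equiv 1\pmod 3$ gives zero shift. For each case I would pick the defect offset and the position of the extra cell so that the extra cell creates a run of infected cells along a row, each new cell gaining three infected neighbours from the adjacent rows at the defect. If one defect plus one cell is insufficient, the fallback is to adjust the offsets of two or three rows near the defect, keeping exactly $n/3$ cells per row. This is where the parity of $m$ should be used, and it needs a case check. Once a row is full, the propagation lemma finishes, with the final meeting row infected because its cells have infected rows on both sides together with a horizontal pattern neighbour.
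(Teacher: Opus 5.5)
There is a genuine gap. Your propagation lemma is correct: a full row spreads through any two consecutive rows with different offsets. But the step you call the main obstacle is the whole content of the theorem, and the construction you commit to cannot be finished by placing one extra cell. In these period-$3$ patterns, once the rows flanked by two equal offsets have filled in, every remaining uninfected cell has exactly two infected neighbours. So the uninfected cells split into disjoint cycles, and a single seed infects only its own cycle; no ``run along a row'' ever forms. Concretely, the construction fails in each residue class:
\begin{itemize}
\item For $m\equiv1\pmod 3$, your forced defect has shift $0$. Two consecutive rows then coincide and contain $n/3\ge2$ disjoint, entirely uninfected $2\times2$ squares. Such a square can never become infected, and one extra cell breaks at most one of them.
\item For $m\equiv0\pmod 3$, a case you omitted, $1-m\equiv1$, so there is no defect at all. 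The pure diagonal leaves $\gcd(m,n)/3$ staircase cycles, for example three when $m=n=9$.
\item For $m\equiv2\pmod 3$, the two rows at the $-1$ defect each gain $n/3$ cells at once. After that, each vertical circuit shifts a staircase by $m-2$ columns, so the rest forms $\gcd(n,m-2)/3$ cycles, for example three when $m=11$, $n=9$.
\end{itemize}
Your fallback of adjusting two or three rows does not say what has to be achieved.

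The missing idea is the right target plus a verified choice of offsets. Every interface shift must be $\pm1$, since a zero shift always creates uninfected $2\times2$ squares. The number of $-1$ shifts is then determined mod $3$ by $m$. Finally, you must prove that what survives the closure is a single cycle, so that one added cell on it percolates.

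The paper does this with a zigzag instead of a diagonal:
\begin{itemize}
\item columns $\equiv1\pmod3$ in odd rows $1,\dots,m-2$;
\item columns $\equiv2\pmod3$ in even rows;
\item columns $\equiv0\pmod3$ in row $m$.
\end{itemize}
After the first round, in each three-column block the uninfected cells form the path $(m-1,1),(m,1),(m,2),(1,2),(1,3),\dots,(m-1,3)$. This path links to the next block through $(m-1,4)$, giving one cycle for every odd $m\ge5$ with $3\mid n$.
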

\begin{figure}[H]
    \centering
    \includegraphics[scale=0.6]{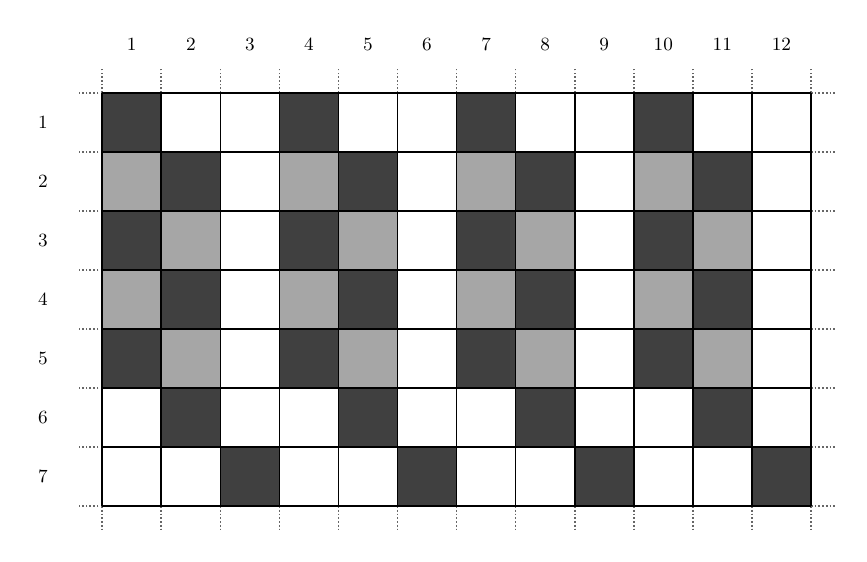}
    \caption{The black cells together with any one of the white cells form a minimum percolating set in the $7\times 12$ torus, as per Theorem~\ref{torusoddmult3}. The gray cells are those infected in the first round.}\label{figure:torusoddmult3}
\end{figure}

\begin{proof}
    In the last row, infect all cells in columns that are $0\pmod{3}$. In the remaining odd rows, infect columns $1\pmod{3}$, and in the even rows, infect columns $2\pmod{3}$. This is a total of $\frac{mn}{3}$ cells. See Figure~\ref{torusoddmult3} for the example of a $7\times 12$ grid. At this point, the cells not infected by these form a single cycle. We can see this from the following observation: Within each $m\times 3$ block, the only cells that are neither initially infected nor adjacent to three cells which are initially infected are the last two cells in the first column, the first and last cells in the second column, and the entire third column except the last cell. Each of these has two infected neighbors so the set of vertices which are not yet infected is necessarily a union of cycles. We verify that it is a single cycle. There is a path through each $3\times n$ block consisting of the penultimate cell in the first column, the last cell in the first column, the last cell in the second column, the first cell in the second column, then the first $m-1$ cells in the the third column in order. Therefore, the thus far uninfected cells of a single $3\times n$ block are in the same cycle. The path continues from the penultimate cell of the last column to the penultimate cell of the first column of the next $3\times n$ block so consecutive blocks are part of the same cycle. By induction, all blocks are part of the same cycle.
    Adding any cell of this cycle to the set of initially infected cells causes the two neighboring uninfected cells to become infected, followed by their neighbors, cascading until the entire cycle is infected. Thus, there is a construction which percolates with $\frac{mn}{3}+1$ infected cells.
\end{proof}

\begin{theorem}\label{torusevenmult3}
    For $m,n\ge 5$, if $m\equiv 0\pmod{2}$ and $n\equiv 0\pmod{3}$, $t_3(m,n)\le\frac{mn}{3}+1$.
\end{theorem}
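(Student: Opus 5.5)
The plan is to adapt the construction of Theorem~\ref{torusoddmult3}. There we infected, in each row, exactly one residue class of columns mod $3$. This gives exactly $\frac{mn}{3}$ cells. The residue classes of consecutive rows (cyclically) are distinct. After the forced first round, the remaining uninfected cells each have exactly two infected neighbours, so they form a $2$-regular subgraph, i.e.\ a union of cycles. If that union is a single cycle, adding one cell on it percolates by the cascade argument used at the end of the proof of Theorem~\ref{torusoddmult3}. So it suffices to choose, for even $m$, a cyclic sequence of residues $r_1,\dots,r_m\in\{0,1,2\}$ (infect $(i,j)$ iff $j\equiv r_i\pmod 3$) for which the leftover set is one cycle.

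The naive choice $r_i=1$ for odd $i$ and $r_i=2$ for even $i$ fails. After one round, columns $1,2\pmod 3$ fill in completely. The columns $0\pmod 3$ then remain as $n/3$ separate vertical cycles, each needing its own extra cell. The sequence must therefore be broken somewhere to connect these cycles, as the final row with residue $0$ did in the odd case. For $m$ even I will use $r_i=1$ for odd $i\le m-3$, $r_i=2$ for even $i\le m-2$, $r_{m-1}=0$ and $r_m=2$. The cyclic sequence is then $\dots,1,2,0,2,1,2,\dots$, in which consecutive rows always carry distinct residues.

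The key steps are as follows.
\begin{enumerate}
\item Determine which uninfected cells have three infected neighbours initially. These are the cells lying between two infected cells of the rows above and below, with a horizontal infected neighbour. Then iterate until stable, as in the gray cells of Figure~\ref{figure:torusoddmult3}.
\item Within a single $m\times 3$ block of columns $3k+1,3k+2,3k+3$, list the cells that remain uninfected. Verify that each has exactly two infected neighbours, so that the remainder is $2$-regular.
\item Trace the path of remaining cells through one block, exactly as in the odd case. It should run down the column $\equiv 0$ over most rows, detour through rows $m-2,m-1,m$ into columns $1$ and $2$ of the block, and exit into the next block.
\item Conclude by induction over the $n/3$ blocks that all remaining cells lie on one cycle, then add one further cell of this cycle.
\end{enumerate}
The count is $\frac{mn}{3}+1$, as required. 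The hypotheses $m,n\ge 5$ guarantee that the special rows $m-2,m-1,m$ and row $1$ are distinct from one another, and that blocks have distinct neighbours.

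The main obstacle is step~3, the verification that the leftover really is a single cycle rather than several. In particular, the detour near the special rows must shift the path horizontally into the neighbouring block, rather than closing up inside one column. If my chosen sequence closes up locally, I will try the alternative ending $r_{m-1}=1$, $r_m=0$, which gives the cyclic pattern $\dots,2,1,0,1,2,\dots$. Failing that, I will try placing two $0$-rows at rows $m-2$ and $m$. I would check candidates on the $6\times 6$ and $6\times 9$ tori before writing the general block argument.
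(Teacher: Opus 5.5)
There is a genuine gap, and it sits at the step you flagged. Your $2$-regularity claim is correct for every admissible residue sequence. What you are missing is the global invariant that decides how many cycles the leftover has.

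Put $d_i=r_{i+1}-r_i\in\{\pm1\}$, with indices cyclic. A row with $r_{i-1}=r_{i+1}$ keeps one uninfected residue class and passes each strand of the leftover straight down. A row with $r_{i-1}\neq r_{i+1}$ (equivalently $d_{i-1}=d_i$) moves each strand sideways by $d_i$ columns. Summing over the runs of equal $d_i$, one vertical circuit shifts every strand by $D=\sum_{i=1}^m d_i=3w$, where $w$ is the winding number of $(r_i)$ around $\mathbb{Z}_3$. Hence the leftover splits into $\gcd(n/3,|w|)$ cycles, read as $n/3$ when $w=0$.

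All three of your candidates have $w=0$. For example, with $\dots,1,2,0,2,1,2,\dots$ and $j\equiv 0\pmod 3$, the cells $(m,j),(1,j),\dots,(m-2,j),(m-2,j+1),(m-1,j+1),(m,j+1)$ already close up into a cycle. So you get $n/3$ disjoint cycles. An extra cell on one of them cannot help the others, because every uninfected neighbour of a cycle cell lies on that same cycle.

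More seriously, the approach cannot cover all cases of the statement. Since each $d_i=\pm1$ and $m$ is even, $D$ is even, so $w$ is even for every admissible sequence. When $n\equiv 0\pmod 6$, $n/3$ is even, so $\gcd(n/3,|w|)\ge 2$ always. Thus no pattern with one residue class per row plus one cell percolates; your planned $6\times 6$ check would fail for every candidate.

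The paper handles $n\equiv 0\pmod 6$ by a different route. It applies Theorem~\ref{thm:recttotorus}, $t_3(m,n)\le 1+s_3(m-1,n-1)$, with $s_3(m-1,n-1)=LB=\frac{mn}{3}$ from Theorem~\ref{thm:oldcases}, since $n-1\equiv 2\pmod 3$.

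For $n\equiv 3\pmod 6$ your framework does work once you choose $w=\pm 2$. The paper uses $1,2,\dots,1,2,0,1,2,0$, with residue $0$ in rows $m-3$ and $m$ and residues $1,2$ in rows $m-2,m-1$. There the strands drift by two blocks per circuit, and because $n/3$ is odd the leftover is a single cycle. A drift of one block, which is what you were hoping for, is impossible when $m$ is even.

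So the two missing ingredients are the drift/parity invariant and a separate construction for $n\equiv 0\pmod 6$.
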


\begin{figure}[H]
    \centering
    \includegraphics[scale=0.6]{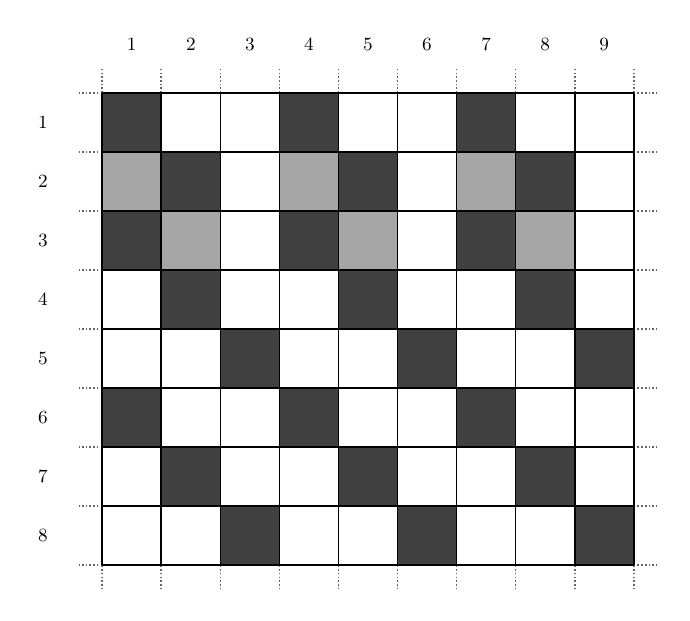}
    \caption{The black cells together with any one of the white cells form a minimum percolating set in the $8\times 9$ torus, as per Theorem~\ref{torusevenmult3}. The gray cells are those infected in the first round.}\label{figure:torusevenmult3}
\end{figure}

\begin{proof}
    If $n\equiv 0\pmod{6}$, then $m$ odd and $n-1\equiv 5\pmod{6}$ so Theorem \ref{thm:recttotorus} yields \[t_3(m,n)\le 1+s_3(m-1,n-1)=1+\frac{(m-1)(n-1)+(m-1)+(n-1)+1}{3}=\frac{mn}{3}+1.\] We may now assume $n\equiv 3\pmod{6}$. 
    In the first $m-4$ rows, infect columns $1\pmod{3}$ of odd rows and columns $2\pmod{3}$ of even rows. In rows $m-3$ and $m$, infect columns $0\pmod{3}$. In row $m-2$, infect columns $1\pmod{3}$ and in row $m-1$, infects columns $2\pmod{3}$. This is a total of $\frac{mn}{3}$ infected cells. See Figure~\ref{torusevenmult3} for the example of an $8\times 9$ grid. At this point, the cells not infected by these form a single cycle. We can see this from the following observation: Within each $m\times 3$ block, the only cells that are neither initially infected nor adjacent to three cells which are initially infected are rows $m-4,m-3,m-1,m$ in the first column, rows $1,m-3,m-2,m$ of the second column, and the entire third column except rows $m-3$ and $m$. Each of these remaining uninfected cells has exactly two infected neighbors so the set of vertices which are not yet infected is necessarily a union of cycles. We verify that it is a single cycle. Within columns $3i-2,3i-1,3i$, let $P_i$ be the path from row $m-1$, column $3i-2$ to row $m-4$, column $3i$ that stays entirely within those columns and let $Q_i$ be the path from row $m-4$, column $3i-2$ to row $m-1$, column $3i$ that stays entirely within those columns. Any uninfected cell is part of either $P_i$ or $Q_i$ for some $1\le i\le n/3$. The endpoint of $P_i$ in column $3i$ is adjacent to the endpoint of $Q_{i+1}$ in column $3i+1$, while the endpoint of $Q_i$ in column $3i$ is adjacent to the endpoint of $P_{i+1}$ in column $3i+1$. Thus, $P_i\cup Q_{i+1}$ and $Q_i\cup P_{i+1}$ are both paths. Because $n/3$ is odd, the concatenation $P_1Q_2P_3Q_4\cdots P_{n/3}Q_1P_2\cdots Q_{n/3}$ is a single cycle.

    Adding any cell of this uninfected cycle to the initially infected set is enough to percolate as in the proof of Theorem \ref{torusoddmult3}. This gives a construction with $\frac{mn}{3}+1$ initially infected cells.
\end{proof}

\begin{theorem}\label{torusodd2}
    For $m,n\ge 5$, if $m\equiv 2\pmod{3}$ and $n\equiv 5\pmod{6}$, $t_3(m,n)\le\frac{mn+2}{3}$.
\end{theorem}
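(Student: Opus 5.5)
The plan is to give an explicit construction, as in Theorems~\ref{torusoddmult3} and~\ref{torusevenmult3}, rather than rely on Theorem~\ref{thm:recttotorus}. The reduction does not suffice here. With $m-1\equiv 1\pmod 3$ and $n-1\equiv 4\pmod 6$, Theorem~\ref{thm:rectall} gives $1+s_3(m-1,n-1)\ge 1+LB$. Since $mn\equiv 1\pmod 3$, this is at least $\frac{mn+2}{3}+1$, which is one too many.

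So we must hit the bound of Theorem~\ref{thm:toruslower} essentially exactly. Its proof shows what an optimal set $S_0$ must look like. Since $\frac{mn+2}{3}$ exceeds $\frac{mn+1}{3}$ by only $\frac13$, the perimeter inequality is almost tight. Thus $S_0$ must be very close to an independent set, and every infection after the first must kill exactly two units of perimeter, except the last one, which kills four.

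The target is therefore the same shape as before. We want an independent set $T$ of size $\frac{mn-1}{3}$ with two properties. First, after one round, the cells that are still uninfected all have exactly two infected neighbours. Second, these cells form a \emph{single} cycle. Adding any one cell of that cycle then gives $|S_0|=\frac{mn+2}{3}$, and the infection cascades around the cycle exactly as in the proof of Theorem~\ref{torusoddmult3}.

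For the construction, I would write $n=5+6k$ and split the columns into $2k$ blocks of width $3$ and one gadget of width $5$. I would likewise write $m=3r+2$ and treat the rows as a bulk of height divisible by $3$ plus two special rows.

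In the width-$3$ blocks I would reuse the staircase pattern of Theorem~\ref{torusevenmult3}. This means infecting columns $1\pmod 3$ in odd rows and $2\pmod 3$ in even rows in the bulk, with a modified pattern in the bottom few rows to absorb $m\not\equiv 0\pmod 3$. Each block then leaves two uninfected paths $P_i,Q_i$ running from its left edge to its right edge at two fixed heights, and these swap sides, as before.

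The width-$5$ gadget must do two things. It has to use the correct number of cells, so that the total is $\frac{mn-1}{3}$; the gadget should contribute $\frac{5m-1}{3}$ minus whatever correction the special rows require. It also has to connect the two strands at its boundary so that the parity is right. Since the number $2k$ of width-$3$ blocks is even, the alternation $P_1Q_2P_3\cdots$ would close up into two cycles. The gadget must therefore perform one extra swap, so that the concatenation $P_1Q_2\cdots Q_{2k}\,(\text{gadget})\,Q_1P_2\cdots P_{2k}\,(\text{gadget})$ is one cycle. I would search for this gadget by small cases, starting with the $5\times 5$, $5\times 11$ and $8\times 5$ tori, and then check that it extends vertically by inserting $3$ rows at a time.

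After this, the verification has three steps. The count is routine arithmetic. The first-round check is local: every cell outside the claimed cycle has at least three initial neighbours, and every cell on it has exactly two. The last step is to trace the cycle through the blocks and the gadget.

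The main obstacle is designing the $5$-column gadget, together with the corner where the special rows meet it. It has to be exactly independent, have exactly the right size, and leave residual cells of degree exactly two, while still merging the two strands. If a single gadget proves impossible, my fallback is to let the row defect and the column defect each contribute one swap. Two swaps together give the parity of zero swaps, so I would then rearrange the block alternation itself, using an odd number of effective strand-crossings.
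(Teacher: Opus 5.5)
\textbf{There is a genuine gap, and it is in the template itself, not in the gadget: the independent set $T$ you are looking for cannot exist.}

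Apply your own perimeter bookkeeping to $T$ alone. Since $T$ is independent, $P(T)=4|T|$. Let $L$ be the length of the residual cycle. Order sequentially the $mn-|T|-L$ cells that become infected before the cycle stage. Each has at least three infected neighbours when it is added, so each lowers the perimeter by at least $2$. At the cycle stage, each of the $L$ residual cells has exactly two infected neighbours, so the perimeter is exactly $2L$. Hence
\[
2L\;\le\;4|T|-2\,(mn-|T|-L),\qquad\text{that is,}\qquad |T|\ge\frac{mn}{3}.
\]
Here $m\equiv n\equiv 2\pmod 3$, so $mn\equiv 1\pmod 3$, and in fact $|T|\ge\frac{mn+2}{3}$. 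After adding the seed you have at least $\frac{mn+5}{3}$ cells. That is the same count you already rejected when discussing Theorem~\ref{thm:recttotorus}.

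So no width-$5$ gadget, row correction, or rearrangement of strand swaps can succeed, and the single-cycle parity issue you flag as the main obstacle is moot. The template of Theorems~\ref{torusoddmult3} and~\ref{torusevenmult3} works there precisely because $3\mid mn$, where $|T|=\frac{mn}{3}$ is attainable.

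What your analysis misses is where the extra $\frac13$ must be spent. By the Dukes--Noel--Romer lemma quoted in the introduction, on the $4$-regular torus $S_0$ percolates if and only if $C_m\square C_n-S_0$ is a forest. Counting edges then gives $3|S_0|=mn+e(S_0)+c$, where $e(S_0)$ is the number of edges inside $S_0$ and $c\ge 1$ is the number of trees. Thus $|S_0|=\frac{mn+2}{3}$ forces one of two options:
\begin{itemize}
\item exactly one adjacent pair of initial cells, with a complement that is a single tree; or
\item an independent $S_0$ whose complement is two trees.
\end{itemize}

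The paper takes the first route. It infects columns $\equiv 1,3\pmod 6$ in rows $\equiv 1\pmod 3$, columns $\equiv 0,4\pmod 6$ in rows $\equiv 2\pmod 3$, and columns $\equiv 2,5\pmod 6$ in rows $\equiv 0\pmod 3$. It then adds the cell $(m,n)$, which is adjacent to the infected cell $(m,n-1)$. After the local infections, the uninfected cells form a single path, not a cycle, and one endpoint of that path already has three infected neighbours.

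If you want to keep a seeded independent core of size $\frac{mn-1}{3}$, its residual must have cyclomatic number at least $2$, for instance a theta graph. The seed must then be a vertex lying on all of its cycles, such as a branch vertex of the theta.
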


\begin{figure}[H]
    \centering
    \includegraphics[scale=0.6]{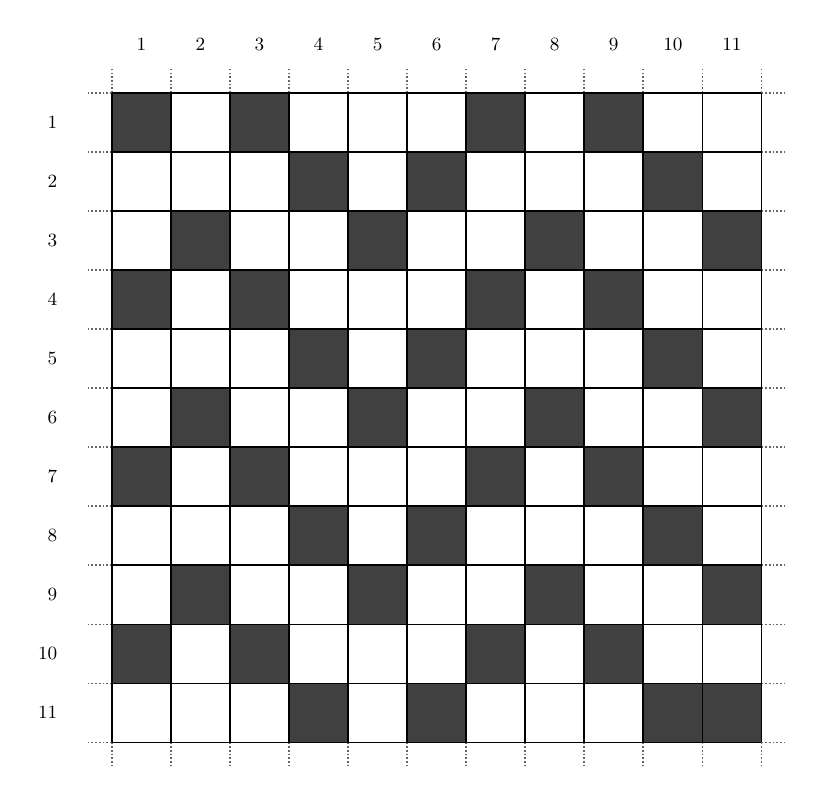}
    \caption{A minimum percolating set in the $11\times 11$ torus, as per Theorem \ref{torusodd2}.}\label{figure:torusodd2}
\end{figure}

\begin{proof}
    In rows that are $1\pmod{3}$, infect all cells $1,3\pmod{6}$; in rows that are $2\pmod{3}$, infect those cells in positions $0,4\pmod{6}$; in rows $0\pmod{3}$, infect all cells $2,5\pmod{6}$. Additionally, infect the last cell in the bottom row. See Figure~\ref{figure:torusodd2} for the example of an $11\times 11$ grid. This is a total of \[
    \frac{(m-2)n}{3}+(n+1)/3+(n-2)/3+1=\frac{mn+2}{3}
    \] infected cells.
    
    Let block $C$ be the infection pattern of the first three rows, so that the first $m-2$ rows consist of $(m-2)/3$ copies of the block $C$. In the middle row of each copy of $C$, the cells below, to the left, and to the right of a cell in a column $5\pmod{6}$, besides the last column, are already infected so those cells become infected. In the bottom row of each copy of $C$, the cells to the left, to the right and below the cell in column $1$ are already infected, so those cells become infected. For every row $1\pmod{3}$ except for the first row, the cells to the left, to the right, and above a cell in a column $2\pmod{6}$ are infected, so those cells become infected.
    
    Next we establish that every cell in the first and last row becomes infected. For cells in the top row in columns $0,4\pmod{6}$, the cells above and below are already infected. Furthermore for those in columns $0\pmod{6}$, there is an additional infected cell to the right and for those in columns $4\pmod{6}$, there is an additional infected cell to the left. Thus, in the top row, cells in columns $0,4\pmod{6}$ become infected. This in turn infects those in columns $5\pmod{6}$, except for the last one, which now have infected neighbors below, to the left, and to the right. $(1,n)$ still gets infected because of infected neighbors to the left, right, and above.
Similarly, cells in columns $1,3\pmod{6}$ of the bottom row have infected neighbors above and below and to one of the left or the right. Thus, these cells become infected. Now cells in columns $2\pmod{6}$ of the bottom row have infected neighbors above, to the left, and to the right, so these become infected as well. The remaining uninfected cells in the bottom row are in columns $5\pmod{6}$ and become infected because of their infected neighbors to the left, right, and below. The remaining uninfected cells in the top row are in columns $2\pmod{6}$ and become infected because of their infected neighbors to the left, right, and above.

With the top and bottom row already infected, we can think of the middle $m-2$ rows as consisting of $(m-2)/3$ copies of the block $C'$ where the infected cells in the top row of $C'$ are those in columns $0,4,5\pmod{6}$ except for the last column, the infected cells in the middle row of $C'$ are those in the first column as well as columns $2,5\pmod{6}$, and those in the last row are those in columns $1,2,3\pmod{6}$. The uninfected cells of $C'$ form a single path from the top row, last column to the bottom row, last column. Furthermore, the only adjacencies between uninfected cells in different copies of $C'$ are between the cell in the bottom row, last column of one copy of $C'$ and the cell in the top row, last column of the copy below. That means all the uninfected cells form a path. An endpoint of the path already has three infected neighbors, so it becomes infected, and the infection cascades through the whole path until every cell is infected.
\end{proof}

\begin{theorem}\label{torusodd1}
    For $m,n\ge 5$, if $m\equiv 1\pmod{3}$ and $n\equiv 1\pmod{6}$, $t_3(m,n)\le\frac{mn+2}{3}$.
\end{theorem}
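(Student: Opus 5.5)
Since $m\equiv 1\pmod 3$ and $n\equiv 1\pmod 6$, we have $mn\equiv 1\pmod 3$, so $\frac{mn+2}{3}$ is an integer. It equals $\left\lceil\frac{mn+1}{3}\right\rceil$, the lower bound of Theorem~\ref{thm:toruslower}, so the task is purely a construction. I would mirror the proof of Theorem~\ref{torusodd2}. Concretely, I would tile the first $m-1$ rows (a multiple of $3$) with $(m-1)/3$ copies of a $3\times n$ block $C$. Within $C$, the rows would carry residue classes modulo $6$ chosen so that each row has about $n/3$ infected cells. The last row would be a special row whose count is chosen to make the total exactly $\frac{mn+2}{3}$.

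\textbf{Choosing the pattern.} As a first attempt I would shift the pattern of Theorem~\ref{torusodd2}: rows $\equiv 1\pmod 3$ infected in columns $1,3\pmod 6$, rows $\equiv 2$ in columns $0,4\pmod 6$, and rows $\equiv 0$ in columns $2,5\pmod 6$. For $n\equiv 1\pmod 6$ these rows contain $(n+5)/3$, $(n-1)/3$ and $(n-1)/3$ infected cells respectively, so $C$ has $n+1$ cells. The total over $C$-rows is then $\frac{(m-1)(n+1)}{3}$. To reach $\frac{mn+2}{3}$ the last row must contribute $\frac{n-m+3}{3}$, which depends on $m$. So this shift is wrong, and I would instead pick residue classes giving exactly $n$ cells per block, with one extra cell near the wraparound column $n$ (where column $n\equiv 1$ meets column $1$). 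The last row then contributes $\frac{n+2}{3}$, for example columns $1\pmod 3$. The guiding principle is the one behind the perimeter bound: in a tight set, every newly infected cell except the last must have exactly three infected neighbours at the moment it is infected. Equivalently, no two initially infected cells are adjacent, and the uninfected cells form a structure that is eventually traversed as a single path.

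\textbf{Verifying percolation.} I would follow the same steps as in Theorem~\ref{torusodd2}.
\begin{enumerate}
\item Identify the cells infected in the first round inside each copy of $C$, using the periodic pattern in the neighbouring rows.
\item Show that the special last row and the first row become fully infected by a left-to-right cascade. Their vertical neighbours are already infected, and the defect at column $n$ supplies the seed.
\item With rows $1$ and $m$ infected, reinterpret each copy of $C$ as a modified block $C'$ whose uninfected cells form a single path running from its top-right cell to its bottom-right cell.
\item Check that consecutive copies of $C'$ meet only at those endpoints, so all uninfected cells form one path. One endpoint has three infected neighbours, and infection then cascades along the whole path.
\end{enumerate}

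\textbf{Main obstacle.} The hard part is the wraparound column: because $n\equiv 1\pmod 6$, the period-$6$ pattern does not close up. The residue classes must be chosen so that the seam between column $n$ and column $1$ neither creates adjacent infected cells (which would cost a cell) nor leaves an uninfected cycle, which would never become infected. I would first try the simplest candidate and verify it on the $7\times 7$ torus. This would be the pattern of Theorem~\ref{torusodd2} with the roles of columns $n$ and $1$ adjusted, and with the extra infected cell placed at $(m,n)$ or $(m-1,n)$. I would then check the $10\times 13$ torus to confirm that the block structure propagates in both directions. If a seam cycle appears, I would move the extra cell onto that cycle, exactly as in the proofs of Theorems~\ref{torusoddmult3} and~\ref{torusevenmult3}, where adding one cell on an uninfected cycle completes percolation.
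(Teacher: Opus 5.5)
There is a genuine gap: you never specify an initial set, so nothing is proved. The statement is a pure existence claim, and its whole content is an explicit pattern plus a check that it percolates. Your proposal defers both to a trial-and-error search (``I would first try the simplest candidate\ldots''). The paper supplies both concretely. In the first $m-3$ rows, rows $\equiv 1 \pmod 3$ get column $2$ and the columns $\equiv 1 \pmod 3$ from $4$ on. Rows $\equiv 2$ get columns $1,6$ and the columns $\equiv 3,5 \pmod 6$ from $9$ on. Rows $\equiv 0$ get columns $3,5$ and the columns $\equiv 0,2 \pmod 6$ from $8$ on. Rows $m-2,m-1,m$ are special. This puts exactly $n$ cells in each $3$-row period, and the irregular first eight columns are what repair the seam. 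The verification is an explicit infection sequence. After it, the remaining uninfected cells form paths $P,Q_i,R,S_i,T_i,U,V,W$, which join through the wraparound into one path whose endpoints $(m-1,5)$ and $(m,1)$ each have three infected neighbours.

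The heuristics steering your search are also partly wrong, which matters because they are all the proposal contains.
\begin{enumerate}
\item Your count of the shifted pattern is off. With $n=6j+1$, the columns $\equiv 1,3 \pmod 6$ number $2j+1=(n+2)/3$, not $(n+5)/3$. So that block has exactly $n$ cells, and the last row needs $(n+2)/3$ cells regardless of $m$.
\item The real obstruction is the seam. Columns $n$ and $1$ are both $\equiv 1 \pmod 6$, so every row $\equiv 1 \pmod 3$ contains an adjacent infected pair. Refining the count in Theorem~\ref{thm:toruslower} gives $|S_0|\ge (mn+1+e+d)/3$. Here $e$ counts adjacent initially infected pairs and $d$ counts non-final cells infected with four infected neighbours. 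At $|S_0|=(mn+2)/3$ this forces $e+d\le 1$. So adjacency is not forbidden outright; the paper uses exactly one pair, $(m,5),(m,6)$. But $(m-1)/3\ge 2$ seam pairs are over budget. Your suggested last row, columns $\equiv 1 \pmod 3$, already spends the single allowance at $(m,n),(m,1)$.
\item Your cycle fallback cannot work here. Suppose the closure of $S'$ is everything except a cycle $Z$ whose cells each have two infected neighbours. Then $2|Z|\le 4|S'|-2(mn-|S'|-|Z|)$, i.e.\ $|S'|\ge mn/3$, hence $|S'|\ge (mn+2)/3$ when $mn\equiv 1\pmod 3$. So the cells other than the one you place on the cycle already use up the whole budget, and the total overshoots. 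Unlike Theorems~\ref{torusoddmult3} and~\ref{torusevenmult3}, the leftover here must be a path, not a cycle.
\item Your step~3 does not transfer from Theorem~\ref{torusodd2}. Once rows $1$ and $m$ are infected, rows $2,\ldots,m-1$ number $m-2\equiv 2 \pmod 3$, so they do not split into $3$-row blocks $C'$.
\end{enumerate}
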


\begin{figure}[H]
    \centering
    \includegraphics[scale=0.6]{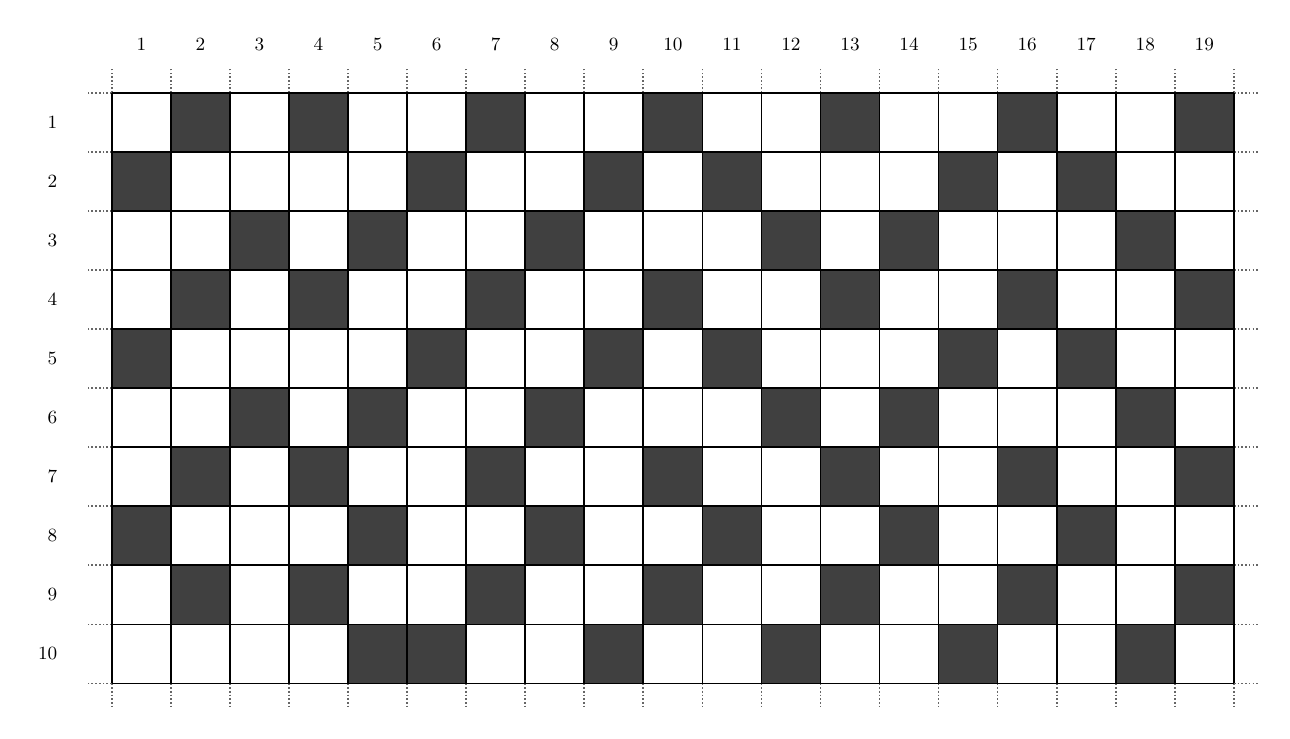}
    \caption{A minimum percolating set in the $10\times 19$ torus, as per Theorem \ref{torusodd1}.}\label{figure:torusodd1}
\end{figure}

\begin{proof}
The infected cells in the first $m-3$ rows are as follows: cell $2$ and cells $1\pmod{3}$ starting from $4$ for rows $1\pmod{3}$; cells $1,6$ and cells $3,5\pmod{6}$ starting from $9$ for rows $2\pmod{3}$; cells $3,5$ and cells $0,2\pmod{6}$ starting from $8$ for rows $0\pmod{3}$. For row $m-2$, infect cell $1$ and cells $2\pmod{3}$ starting from $5$. For row $m-1$, infect cell $2$ and cells $1\pmod{3}$ starting from $4$. For row $m$, infect cell $5$ and cells $0\pmod{3}$ starting from $6$. See Figure~\ref{figure:torusodd1} for the example of a $10\times 19$ grid. This gives a total of 
\[
\left(\frac{m-4}{3}\right)\left(\frac{n+2}{3}+\frac{n-1}{3}+\frac{n-1}{3}\right)+2\left(\frac{n+2}{3}+\frac{n-1}{3}\right)=\frac{mn+2}{3}
\] infected cells, so it suffices to check that every cell becomes infected.

With the exception of the first seven columns, the last column, and the last three rows, every cell which is both in a row that is $2\pmod{3}$ and a column that is $4\pmod{6}$ becomes infected by virtue of its neighbors to the left, right, and above. Every cell in a row that is $0\pmod{3}$ and a column that is $1\pmod{6}$ becomes infected by virtue of its neighbors to the left, right, and below.

In column $4$, cells in rows $0\pmod{3}$ that are not already infected become infected by virtue of their neighbors to the left, right, and below. In column $3$, outside of the first and last rows, cells in rows $1\pmod{3}$ become infected by virtue of their neighbors to the left, right, and above. In column $1$, outside of the last row, cells in rows $1\pmod{3}$ become infected by virtue of their neighbors to the left, right, and below. In row $m-3$, the cells in column $5$ and the remaining uninfected cells in columns $2\pmod{6}$ become infected by virtue of their neighbors above, below, and to the left. In row $m-2$, the remaining uninfected cells in columns $1\pmod{3}$ become infected by virtue of their neighbors above, below, and to the right. In row $m$, the cells in columns $1\pmod{3}$ beginning from column $7$ become infected by virtue of their neighbors above, below, and to the left. $(m-1,1)$ becomes infected by its neighbors to the left, right, and above, and $(m,4)$ becomes infected by its neighbors above, below, and to the right. $(m-2,2)$ gets infected by its neighbors above, below, and to the left, and this in term infects $(m-2,3)$, followed by $(m-1,3)$. $(1,6)$ gets infected by its neighbors above, below, and to the right, and this in turn infects $(1,5)$, followed by $(2,5)$, followed by $(2,4)$. In row $1$, the cells in columns $3\pmod{6}$ beginning from column $9$ become infected by virtue of their neighbors above, below, and to the right.

Now ignoring the fact that the grid is in fact a torus, that is, not considering that the first and last rows are adjacent and that the first and last columns are adjacent, the remaining uninfected cells belong to the following set of paths, with no adjacencies between uninfected cells in separate paths except for those that involve wrapping around the boundary. These are a path $P$ from $(m-1,5)$ to $(m-4,n)$, paths $Q_i$ from $(3i, 1)$ to $(3i-3,n)$ for $i=3$ to $\frac{m-4}{3}$, a path $R$ from $(6,1)$ to $(1,8)$, paths $S_i$ from $(m,6i+2)$ to $(m,6i+5)$ for $i=1$ to $\frac{n-7}{6}$, paths $T_i$ from $(1,6i+5)$ to $(1,6i+8)$ for $i=1$ to $\frac{n-13}{6}$, a path $U$ from $(1,n-2)$ to $(3,n)$, a path $V$ from $(3,1)$ to $(1,3)$, and a path $W$ from $(m,3)$ to $(m,1)$. Now considering how the grid wraps around, this is actually a single path made up of the following segments in order: $P, Q_{\frac{m-4}{3}}, Q_{\frac{m-4}{3}-1}, \dots, Q_4, Q_3, R, S_1,T_1,S_2,T_2,\dots,S_{\frac{n-13}{6}},T_{\frac{n-13}{6}},S_{\frac{n-7}{6}},U,V,W$. The endpoints are $(m-1,5)$ and $(m,1)$, which each already have three infected neighbors. Once one endpoint becomes infected, the infection cascades down the rest of the path until all cells are infected, as desired.
\end{proof}

To prove the upper bound for \Cref{thm:torusbounds}, we split into several cases.

\begin{theorem}
    For $m,n\ge 5$, if $m\equiv n\equiv 4\pmod{6}$ or $m\equiv n\equiv 2\pmod{6}$, then $t_3(m,n)\le \frac{mn+5}{3}$.
\end{theorem}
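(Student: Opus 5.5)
The plan is to reduce to the rectangular case via Theorem~\ref{thm:recttotorus}, which gives
\[
t_3(m,n)\le 1+s_3(m-1,n-1).
\]
I would then bound $s_3(m-1,n-1)$ using the values already determined for odd rectangles. In both congruence cases $m-1$ and $n-1$ are odd, and
\[
(m-1)(n-1)+(m-1)+(n-1)=mn-1.
\]
So it suffices to show $s_3(m-1,n-1)\le \frac{mn-1}{3}+1$. That yields $t_3(m,n)\le \frac{mn-1}{3}+2=\frac{mn+5}{3}$.

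\emph{Case $m\equiv n\equiv 4\pmod 6$.} Here $m-1\equiv n-1\equiv 3\pmod 6$. Since $m,n\ge 5$ forces $m,n\ge 10$, we have $m-1,n-1\ge 9\ge 7$, so Corollary~\ref{cor:rect3} applies. It gives $s_3(m-1,n-1)=\frac{mn-1}{3}+1$, unless $m-1=n-1=2^k-1$. In that exceptional case Theorem~\ref{rectangleC0} gives the even smaller value $s_3(m-1,n-1)=\frac{mn-1}{3}$. Either way the needed bound holds.

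\emph{Case $m\equiv n\equiv 2\pmod 6$.} Here $m-1\equiv n-1\equiv 1\pmod 6$. Corollary~\ref{cor:rect1} gives $s_3(m-1,n-1)=\frac{mn-1}{3}+1$, except when $m-1=n-1=2^k-1$. In that case Theorem~\ref{rectangleC0} again gives $\frac{mn-1}{3}$. The hypothesis $m,n\ge 5$ means $m,n\ge 8$, so $m-1,n-1\ge 7$ and no small-case issues arise. Theorem~\ref{thm:recttotorus} requires only $m,n\ge 3$.

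There is no real obstacle. The only care needed is the arithmetic identity above and checking that the exceptional power-of-two cases only improve the bound. If either corollary were felt to rely on constraints not visible here, the fallback is the Four Rectangle Trick (Theorem~\ref{4rt}) with $x=y=4$, exactly as in the proof of Corollary~\ref{cor:rect3}.
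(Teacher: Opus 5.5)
Your proposal is correct and follows the paper's proof: apply Theorem~\ref{thm:recttotorus}, then bound $s_3(m-1,n-1)\le \frac{mn-1}{3}+1$ using Corollary~\ref{cor:rect3}, Corollary~\ref{cor:rect1} and Theorem~\ref{rectangleC0}. You also spell out details the paper leaves implicit, namely the identity $(m-1)(n-1)+(m-1)+(n-1)=mn-1$ and the check that $m-1,n-1\ge 7$.
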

\begin{proof}
    Note, by \Cref{rectangleC0}, \Cref{cor:rect3}, and \Cref{cor:rect1} that $s_3(m,n)\le \frac{mn+m+n}{3}+1$ when $m\equiv n\equiv 1\pmod{6}$ or $m\equiv n\equiv 3\pmod{6}$. Thus, the desired statement follows from \Cref{thm:recttotorus}.
\end{proof}
\begin{theorem}\label{thm:torus15}
    For $m,n\ge 5$, if $m\equiv 1\pmod{3}$ and $n\equiv 5\pmod{6}$,  $t_3(m,n)\le \frac{mn+4}{3}$.
\end{theorem}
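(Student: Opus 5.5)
The plan is to split according to $m \pmod 6$. We use the reduction of Theorem~\ref{thm:recttotorus} wherever the rectangle values from Section~\ref{sec:Rectangles} are good enough, and give a direct toroidal construction otherwise.

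\textbf{Case $m\equiv 4\pmod 6$, $n\ge 11$.} Here $m-1\equiv 3\pmod 6$ and $n-1\equiv 4\pmod 6$, and both are at least $7$. By Theorem~\ref{thm:rect43} (with the roles of rows and columns exchanged), or equivalently Theorem~\ref{thm:rectall} with one dimension even,
\[
s_3(m-1,n-1)=\frac{(m-1)(n-1)+(m-1)+(n-1)+2}{3}=\frac{mn+1}{3}.
\]
This is an integer, since $mn\equiv 2\pmod 3$. Theorem~\ref{thm:recttotorus} then gives $t_3(m,n)\le 1+\frac{mn+1}{3}=\frac{mn+4}{3}$, as required.

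\textbf{Case $m\equiv 4\pmod 6$, $n=5$.} Now $n-1=4$, and Theorem~\ref{width4} only gives $s_3(4,m-1)=LB+1$ for $m-1\equiv 3\pmod 6$, $m-1>3$. That is one too many, so this case needs a separate treatment. I would instead apply Theorem~\ref{thm:recttotorus} in the other orientation, which gives nothing new. So I would give a direct construction for the $m\times 5$ torus: a periodic pattern with period $6$ in the long direction and roughly $5/3$ infected cells per row, plus an explicit patch.

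\textbf{Case $m\equiv 1\pmod 6$.} Here $m-1\equiv 0$ and $n-1\equiv 4\pmod 6$ are both even. The rectangle bound is $s_3(m-1,n-1)=\frac{mn+4}{3}$, so the reduction loses $1$. I would therefore build an explicit torus pattern in the style of Theorems~\ref{torusodd2} and \ref{torusodd1}. Concretely:
\begin{enumerate}
\item Take the $(m-1)$ rows as $(m-1)/3$ copies of a $3\times n$ block whose rows infect columns in residue classes mod $6$. This is similar to the block $C$ of Theorem~\ref{torusodd2}, but adapted so that each block row has about $n/3$ cells. The cyclic wrap in the column direction is handled by one or two boundary adjustments, since $n\equiv 5\pmod 6$.
\item Add one extra row with $(n+1)/3$ or so infected cells, and correct the count with a constant number of extra cells. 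The target is exactly $\frac{mn+4}{3}$ cells.
\item Verify percolation. First do the first-round infections inside each block. Then show that the remaining uninfected cells form a single path, using the same concatenation bookkeeping as in Theorem~\ref{torusodd1}, and that this path has an endpoint with three infected neighbours.
\end{enumerate}

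The main obstacle is this direct construction for $m\equiv 1\pmod 6$, together with the $n=5$ case. Getting the residual uninfected set to be one path (rather than several cycles) across the wrap-around seams is delicate. As a first attempt I would modify the pattern of Theorem~\ref{torusodd2}: since $m-1\equiv 0\pmod 3$, replace its extra bottom-row treatment by one additional row patterned as in the last row of Theorem~\ref{torusodd1}. I would then check small instances ($7\times 11$, $13\times 11$) by hand to fix the seam cells before writing the general verification.
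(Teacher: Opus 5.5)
Your first case is correct. For $m\equiv 4\pmod 6$ and $n\ge 11$, Theorem~\ref{thm:rect43} with rows and columns exchanged gives $s_3(m-1,n-1)\le\frac{mn+1}{3}$, and Theorem~\ref{thm:recttotorus} then gives $t_3(m,n)\le\frac{mn+4}{3}$. The paper does not use this shortcut here, though it uses the same reduction elsewhere, e.g.\ for even $n$ in Theorem~\ref{thm:torus21}.

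But that is the only part of the statement you actually prove. The gap is everything else: every $m\equiv 1\pmod 6$ (already the $7\times 5$, $7\times 11$ and $13\times 11$ tori), and $m\equiv 4\pmod 6$ with $n=5$. You correctly compute that $1+s_3(m-1,n-1)$ overshoots the target by exactly one there. So these cases need a genuinely new percolating set of size $\frac{mn+4}{3}$ on the torus, and the proposal does not contain one. Phrases like ``residue classes mod $6$'', ``one or two boundary adjustments'', ``$(n+1)/3$ or so'' cells and ``a constant number of extra cells'' do not define a set, and the percolation check is only announced. In those cases the construction is the whole theorem, not a routine detail.

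To see why it is delicate, note that every vertex of $C_m\square C_n$ has degree $4=k+1$. By the lemma of Dukes, Noel, and Romer quoted in the introduction, a set $S$ therefore percolates if and only if $C_m\square C_n-S$ is a forest. Counting edges then gives $3|S|=mn+c+e(S)$, where $c$ is the number of components of the uninfected set and $e(S)$ is the number of adjacent infected pairs. At $|S|=\frac{mn+4}{3}$ you have only $c+e(S)=4$ to spend. The seams where a periodic pattern meets the wrap-around (neither period divides the side, as $n\equiv 5\pmod 6$ and $m\equiv 1\pmod 3$) must therefore be arranged with essentially no waste. A careless patch either spends an extra adjacency or leaves a cycle of uninfected cells, which never becomes infected. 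Your suggested starting point (Theorem~\ref{torusodd2} plus a row borrowed from Theorem~\ref{torusodd1}) is untested, and nothing in the proposal shows it avoids this.

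For comparison, the paper needs no case split at all: a single explicit pattern works for every $m\equiv 1\pmod 3$ and $n\equiv 5\pmod 6$, including $n=5$.
\begin{itemize}
\item In the first $m-3$ rows, rows $\equiv 1,2,0\pmod 3$ infect the columns $\equiv 1,4$, $\equiv 3,5$ and $\equiv 0,2\pmod 6$ respectively.
\item Rows $m-2$ and $m$ infect the columns $\equiv 2\pmod 3$, and row $m-1$ infects the columns $\equiv 0\pmod 3$.
\item Finally $(m,n-1)$ is added.
\end{itemize}
This has exactly $\frac{mn+4}{3}$ cells. Its only adjacent infected pairs are $(m,n-1)$ with $(m,n)$ and, across the wrap, $(m,n-1)$ with $(1,n-1)$.

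The verification tracks the uninfected cells after the first rounds. They consist of a path $Q$ through the repeated middle blocks (joined along column $1$) and a path $P$ through the top two and bottom five rows (joined across the wrap at $(1,6i)$--$(m,6i)$), plus a few leftover cells. These are triggered in turn: $(m,n-2)$ starts $P$, the end of $P$ frees the cells left near $(1,1)$, which start $Q$, and a final nine-cell path from $(m-3,n)$ to $(m-1,n)$ finishes.

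To complete your argument you need an explicit construction of this kind, at least for $m\equiv 1\pmod 6$ and for $n=5$, together with such a check. Your reduction can be kept for the case it covers, but it does not remove the need for the construction.
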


\begin{figure}[H]
    \centering
    \includegraphics[scale=0.6]{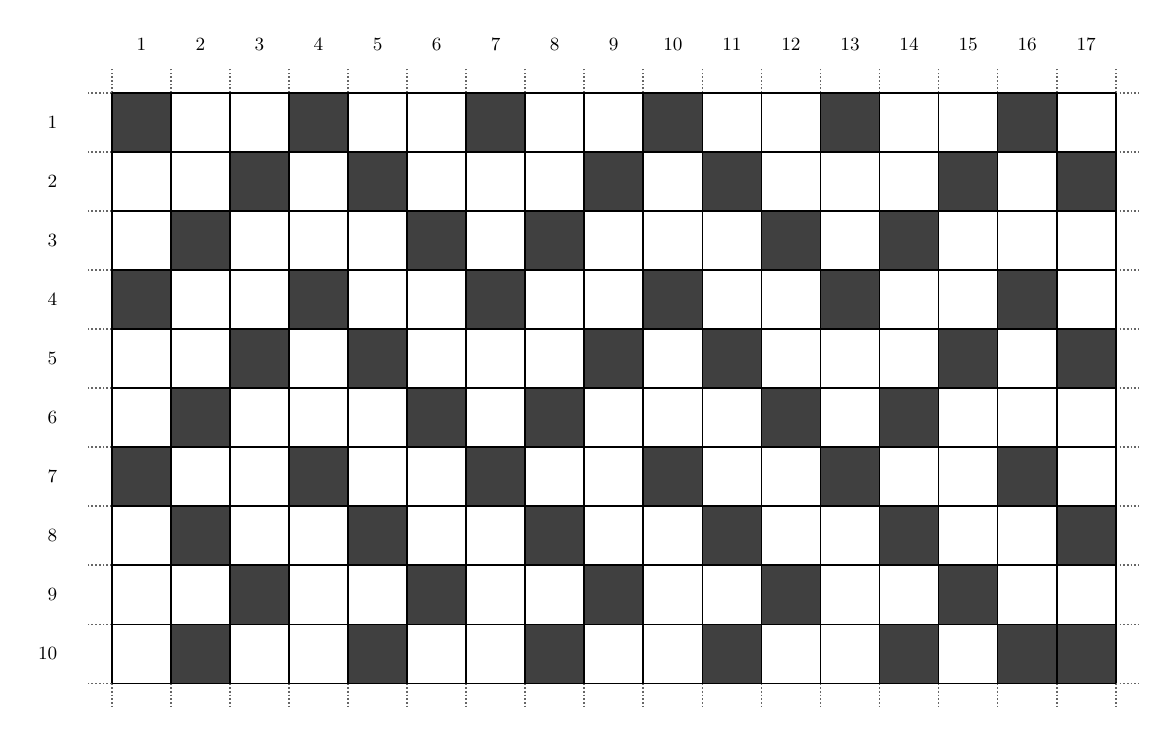}
    \caption{A minimum percolating set in the $10\times 17$ torus, as per Theorem \ref{thm:torus15}.}\label{figure:torus15}
\end{figure}

\begin{proof}
The infected cells in the first $m-3$ rows are as follows: cells $1,4\pmod{6}$ for rows that are $1\pmod{3}$; cells $3,5\pmod{6}$ for rows that are $2\pmod{3}$; cells $0,2\pmod{6}$ for rows are $0\pmod{3}$. In rows $m-2, m$, infect all cells $2\pmod{3}$, and in row $m-1$, infect all cells $0\pmod{3}$. Additionally, infect $(m,n-1)$. See Figure~\ref{figure:torus15} for the example of a $10\times 17$ grid. This is a total of
    \[
    \frac{(m-4)n}{3}+\frac{n+1}{3}+2\left(\frac{n+1}{3}\right)+\frac{n-2}{3}+1=\frac{mn+4}{3}
    \] infected cells.

    Ignoring the top two and bottom five rows, the pattern of infected cells consists of $(m-7)/3$ copies of the $3\times n$ block $C$ where the infected cells in the top row of $C$ are those in columns $0,2\pmod{6}$, the infected cells in the middle row of $C$ are those in columns $1,4\pmod{6}$, and the infected cells in the bottom row of $C$ are those in columns $3,5\pmod{6}$. In the top row of each copy of $C$, cells in columns $1\pmod{6}$, besides the first columns, have infected neighbors below, to the left, and to the right, so become infected. In the middle row of each copy of $C$, the cell in the last column has infected neighbors below, to the right, and to the left, so becomes infected. In the bottom row of each copy of $C$, cells in columns $4\pmod{6}$ have infected neighbors above, to the left, and to the right, so become infected. The remaining uninfected cells in each copy of $C$ form a single path from the top row, first column to the bottom row, first column. Furthermore, within these $m-7$ rows, the only adjacencies between uninfected cells in different copies of $C$ are between the cell in the bottom row, first column of a $C$ block and the cell in the top row, first column of the $C$ block below it. Thus, within these $m-7$ rows, the uninfected cells form a single path $Q$.
    
    We turn our attention to the remaining seven rows. In row $1$, any cell in columns $5\pmod{6}$ has infected neighbors above, below, and to the left, so becomes infected. In row $2$, any cell in columns $4\pmod{6}$ has infected neighbors above, to the left, and to the right, so becomes infected. In row $m-4$, any cell in columns $1\pmod{6}$, except the first column, has infected neighbors below, to the left, and to the right, so becomes infected. In row $m-3$, any cell in columns $2\pmod{6}$ has infected neighbors above, below, and to the left, so becomes infected. In row $m-1$, any cell in columns $2\pmod{3}$, except the last column, has infected neighbors above, below, and to the right, so becomes infected.

    Additionally, $(m-2, 1)$ has infected neighbors above, to the left, and to the right, and $(m, 1)$ has infected neighbors below, to the left, and to the right, so these cells become infected. This in turn infects $(m-1, 1)$.

    Aside from the first four columns, the pattern of uninfected cells in the first two rows consists of paths from $(1, 6i)$ to $(1, 6i+3)$ for $i=1$ to $\frac{n-5}{6}$. Aside from the last three columns, the pattern of uninfected cells in the last five rows consists of paths from $(m, 6i-3)$ to $(m, 6i)$ for $i=1$ to $\frac{n-5}{6}$. Note that no cell in either of these two kinds of paths is adjacent to uninfected cells in any copy of $D$. Furthermore, the only adjacencies between an uninfected cell in one kind of path and the other is between $(1, 6i)$ and $(m, 6i)$ for each $i=1$ to $\frac{n-5}{6}$. Thus these uninfected cells form a single path $P$ from $(m, 3)$ to $(1, n-2)$.

    Note that $(m, n-2)$ has infected neighbors above, to the left, and to the right, so becomes infected. This in turn causes the endpoint $(1, n-2)$ of $P$ to become infected and this infection cascades through the entire path $P$.

    The only remaining uninfected cells in the top two rows are $(1, 2), (1, 3), (2, 1)$, and $(2, 2)$. Since the endpoint $(m, 3)$ of $P$ is now infected, $(1, 3)$ now has infected neighbors above, below, and to the right and becomes infected. The infection then cascades through the last three uninfected cells in the top two rows, finishing with $(2, 1)$. Since $(2, 1)$ is now infected, the endpoint $(3,1)$ of path $Q$ now has infected neighbors above, below, and to the right so becomes infected. This infection now cascades through the entire path $Q$ all the way to the other endpoint, $(m-5, 1)$. This in turn infects $(m-4, 1)$. There are only nine uninfected cells remaining and they form a path from $(m-3, n)$ to $(m-1, n)$. Each endpoint of the path has three infected neighbors so becomes infected, and then the infection cascades through the path until every cell is infected.
\end{proof}
\begin{theorem}\label{thm:torus21}
    For $m,n\ge 5$, if $m\equiv 2\pmod{6}$ and $n\equiv 1\pmod{3}$,  $t_3(m,n)\le \frac{mn+4}{3}$.
\end{theorem}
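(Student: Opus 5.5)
I would split according to $n \bmod 6$. The reason is that for $n\equiv 1\pmod 6$ the bound should follow from Theorem~\ref{thm:recttotorus}, while for $n\equiv 4\pmod 6$ that reduction loses one, so an explicit construction is needed.

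\emph{Case $n\equiv 1\pmod 6$.} Here $m-1\equiv 1\pmod 6$ and $n-1\equiv 0\pmod 6$, so $m-1$ is odd and $n-1$ is even. In that parity case
\[
LB=\left\lceil\frac{(m-1)(n-1)+(m-1)+(n-1)+2}{3}\right\rceil=\frac{mn+1}{3},
\]
which is an integer because $mn\equiv 2\pmod 3$. When $m-1,n-1\ge 7$, Theorem~\ref{thm:rectall} (via the construction of Theorem~\ref{thm:rect01} with the roles of the dimensions swapped) gives $s_3(m-1,n-1)=LB$. Theorem~\ref{thm:recttotorus} then yields $t_3(m,n)\le 1+\frac{mn+1}{3}=\frac{mn+4}{3}$.

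The leftover small cases are $n=7$, where the rectangle is $(m-1)\times 6$, and $m\le 7$, which cannot occur since $m\equiv 2\pmod 6$ and $m\ge 5$ force $m\ge 8$. For $n=7$, Theorem~\ref{width6} gives $s_3(6,m-1)=LB+1$ when $m-1\equiv 1\pmod 6$ (unless $m-1=7$, i.e.\ $m=8$, where it gives $LB$). For $n=7$ with $m\ge 14$ I would therefore give a direct pattern: a $7$-column analogue of the $n=7$ pattern of Theorem~\ref{thm:rect01}, wrapped around the torus with one extra seed. Alternatively, $n=7$ could be absorbed into the general construction below.

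\emph{Case $n\equiv 4\pmod 6$.} I would build an explicit pattern in the style of Theorems~\ref{torusodd1} and~\ref{thm:torus15}. Since $n\equiv 1\pmod 3$, I would use rows whose infected cells are periodic of period $3$ (or $6$) in the column index on the first $n-1$ (or $n-4$) columns, with a short hand-designed tail in the last few columns. The rows would be grouped into $3\times n$ blocks $C$, stacked $(m-5)/3$ times, plus a specially designed band of about five rows. Within each block I would place exactly $n$ infected cells across three rows, since three rows of density $1/3$ give about $n$. The special band would then carry the surplus that makes the total $\frac{mn+4}{3}$; I would check this count first, before doing any percolation analysis.

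To verify percolation I would follow the template of the earlier torus proofs. First, list the cells infected in the first round inside each copy of $C$ and inside the special band. Next, show that the remaining uninfected cells inside each block form a single path whose endpoints sit in a fixed column. Adjacencies between consecutive blocks (and the wraparound between the last and first rows and columns) then concatenate these pieces into one path, or into a few paths that open up in sequence. Finally, some endpoint must already have three infected neighbours, so the infection cascades along the path and then into the next piece.

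The main obstacle is designing the special band and the tail columns so that the uninfected cells genuinely form a path rather than a cycle or a tree with two branch vertices. Global linking is the delicate part, since each path ends wherever the previous one begins. My first attempt would be to transpose the pattern of Theorem~\ref{thm:torus15}, which handles residues $(1,5)$, and to adapt its tail to residues $(2\bmod 6,\,4\bmod 6)$. I would then check the $8\times 10$ case by hand before generalising.
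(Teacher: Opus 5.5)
Your proposal has a genuine gap at $n=7$, $m\ge 14$, and it misdiagnoses the case $n\equiv 4\pmod 6$.

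\textbf{The gap at $n=7$, $m\ge 14$.} You correctly note that Theorem~\ref{thm:recttotorus} cannot reach the bound here, because $s_3(6,m-1)=LB+1$ by Theorem~\ref{width6}. But you then give no construction. Read literally, your suggestion (a $7$-column analogue of the $n=7$ pattern of Theorem~\ref{thm:rect01}, placed on the torus with one extra seed) cannot work. Any percolating set of $P_m\square P_7$ has at least $\frac{8m+9}{3}$ cells for even $m$ by Theorem~\ref{thm:rectlower}, which already exceeds $\frac{7m+4}{3}$. So the pattern must genuinely exploit the wraparound and have density $7/3$ per row.

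The paper does this directly. In the first $m-2$ rows it infects columns $2,4,7$ in rows $\equiv 1$, column $6$ in rows $\equiv 2$, and columns $1,3,5$ in rows $\equiv 0\pmod 3$. It then infects columns $2,6$ in row $m-1$ and columns $1,3,5,7$ in row $m$, for $\frac{7(m-2)}{3}+6=\frac{7m+4}{3}$ cells. After the forced early infections, it checks that the remaining uninfected cells form a single path whose endpoints have three infected neighbours.

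\textbf{The case $n\equiv 4\pmod 6$.} Your claim that the reduction ``loses one'' here is false, and this is exactly the case the paper settles by reduction. Here $m-1\equiv 1$ and $n-1\equiv 3\pmod 6$ are both odd, so
$LB=\left\lceil\frac{(m-1)(n-1)+(m-1)+(n-1)}{3}\right\rceil=\left\lceil\frac{mn-1}{3}\right\rceil=\frac{mn+1}{3}$,
since $mn\equiv 2\pmod 3$. The mixed residue pair $\{1,3\}$ is not an exception in Theorem~\ref{thm:rectall}. Theorem~\ref{thm:rect13} with the dimensions swapped (valid since $m-1\ge 7$ and $n-1\ge 9$) gives $s_3(m-1,n-1)\le\frac{mn+1}{3}$. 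Theorem~\ref{thm:recttotorus} then yields $t_3(m,n)\le\frac{mn+4}{3}$. Your block-and-path template for this class is therefore unnecessary. As written it also proves nothing, since it gives no pattern, no count and no path check.

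\textbf{What is correct.} Your reduction for $n\equiv 1\pmod 6$, $n\ge 13$ via Theorem~\ref{thm:rect01} transposed is correct. So is your treatment of $(m,n)=(8,7)$ via the exceptional value $s_3(6,7)=LB$. This route is shorter than the paper's, which builds a long explicit toroidal pattern for every odd $n\ge 13$. Once the even case is corrected, the only place a hand-built torus pattern is actually needed is $n=7$, $m\ge 14$ — and that is precisely the piece you left out.
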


\begin{figure}[H]
    \centering
    \includegraphics[scale=0.6]{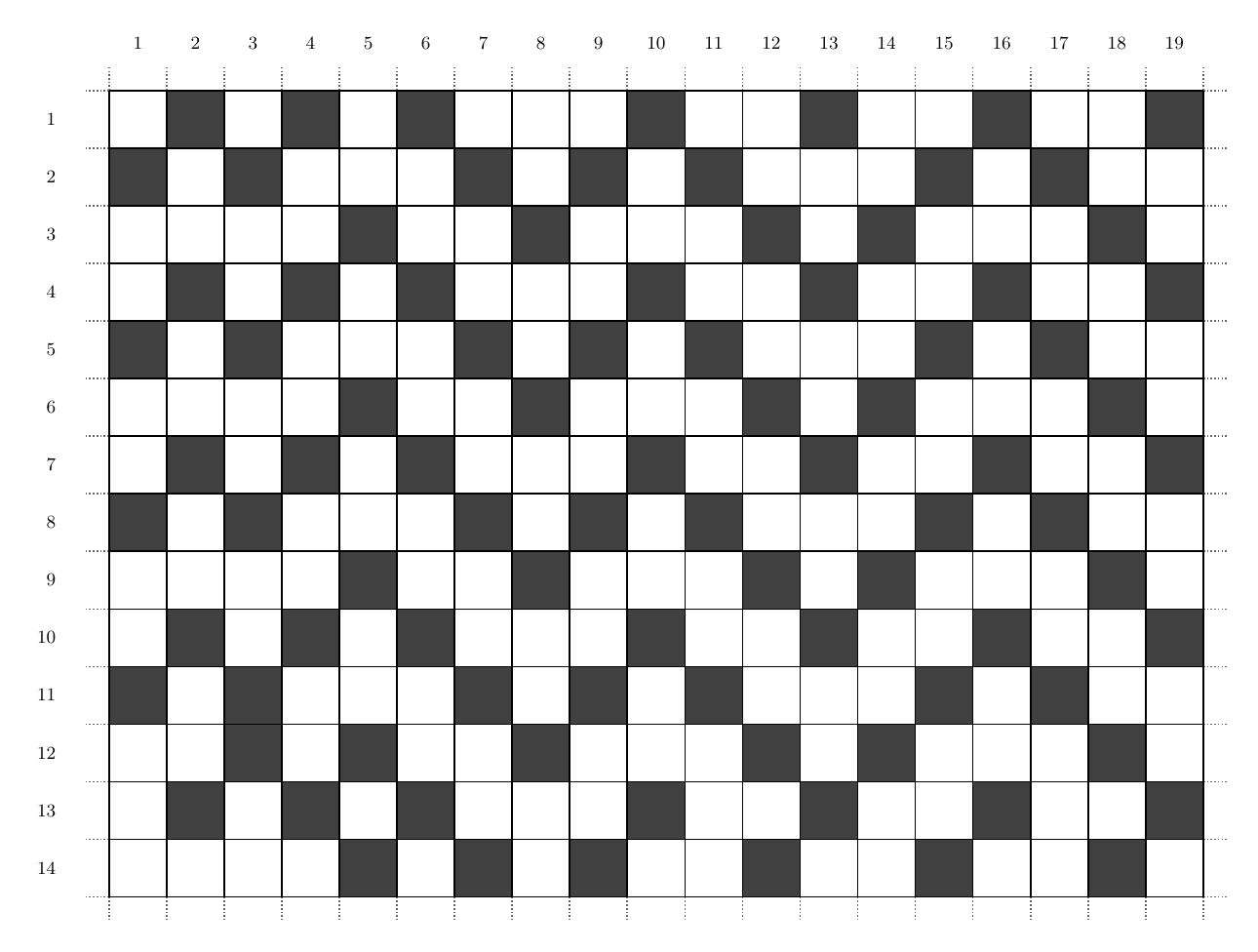}
    \caption{A minimum percolating set in the $14\times 19$ torus, as per Theorem \ref{thm:torus21}.}\label{figure:torus21}
\end{figure}

\begin{proof}
    Note by Theorem~\ref{thm:rect13} that $s_3(m,n)\le\frac{mn+m+n+2}{3}$ when $m\equiv 1\pmod{6}, n\equiv 3\pmod{6}$. Thus, when $n$ is even, we have by Theorem~\ref{thm:recttotorus},
    \[
    t_3(m,n)\le 1+s_3(m-1,n-1)=1+\frac{mn+1}{3}=\frac{mn+4}{3}.
    \]
    For $n\ge 13$ odd, consider the following infection pattern. For the first $m-3$ rows, infect the following cells: cells $2,4,6$ and all cells $1\pmod{3}$ starting from $10$ for rows that are $1\pmod{3}$; cells $1,3,7$ and all cells $3,5\pmod{6}$ starting from $9$ for rows that are $2\pmod{3}$; cell $5$ and all cells $0,2\pmod{6}$ starting from $8$ for rows that are $0\pmod{3}$. In row $m-2$, infect cells $3,5$ and all cells $0,2\pmod{6}$ starting from $8$. In row $m-1$, infect cells $2,4,6$ and all cells are $1\pmod{3}$ starting from $10$. In row $m$, infect cells $5,7$ and all cells $0\pmod{3}$, starting from $9$. See Figure~\ref{figure:torus21} for the example of a $14\times 19$ torus. This is a total of
    \[
    \left(\frac{m-5}{3}\right)n+3\left(\frac{n+2}{3}\right)+2\left(\frac{n-1}{3}\right)=\frac{mn+4}{3}
    \] infected cells.
    \textcolor{green}{}
    Ignoring the bottom five rows, the pattern of infected cells consists of $(m-5)/3$ copies of the $3\times n$ block $C$ where the infected cells in the top row of $C$ are those in columns $2,4,6$ and columns $1\pmod{3}$ starting from $10$, infected cells in the middle row of $C$ are those in columns $1,3,7$ and columns $3,5\pmod{6}$ starting from $9$, and infected cells in the bottom row of $C$ are those in column $5$ and columns $0,2\pmod{6}$ starting from $8$.

    In every copy of $C$, the following cells already have three infected neighbors and thus become infected: columns $1,3,5$ of the top row; columns $2,8$ and everything $4\pmod{6}$ starting from $10$ in the middle row; columns $1\pmod{6}$ starting from $13$ in the bottom row, except for the last column. At this point, the remaining uninfected cells in $C$ are columns $7,8,9$ and $0,2,3,5\pmod{6}$ from column $11$ in the top row, columns $4,5,6$ and $0,1,2\pmod{6}$ from column $12$ in the middle row, and columns $1,2,3,4,6,7,n$ and $3,4,5\pmod{6}$ from column $9$ in the bottom row. Thus, there is a path $P$ of uninfected cells from column $n$ in the bottom row of a copy of $C$ to column $n$ in the middle row of the next copy (provided there is a next copy) of $C$. No cell in this path has uninfected neighbors outside $P$ besides the endpoints in column $n$. A cell in column $n$ of the middle row of its copy of $C$ is adjacent to the cell in column $n$ of the bottom row of the same copy of $C$, an endpoint of the next copy (provided there is a next copy) of $P$. Consequently, there is a path $Q$ of uninfected cells from $(3, n)$ to $(m-6, n)$ where no cell besides the endpoints has another infected neighbor outside $Q$. Furthermore, this accounts for all uninfected cells from rows $3$ through $m-7$ (as well as some in rows $2,m-6$).

    In row $m-4$, columns $1,3,5$ become infected because each has its left and right neighbors infected and cells in columns $1,3$ have a third infected neighbor below while $(m-4, 5)$ has a third infected neighbor above. In row $m-3$, cells in columns $2,8$ and columns $4\pmod{6}$ starting from $10$ are infected because each has infected neighbors left, right, and above, except for $(m-3, 8)$ which has infected neighbors to the left, right, and below. In row $m-2$, cells in columns $1\pmod{6}$ starting from column $13$, except for the last column become infected due to their neighbors to the left, right, and below. Consequently, the path $Q$ can be extended from its endpoint $(m-6, n)$ to $(m-5,n)$ to $(m-5, 1)$, and then all the way to $(m-3, n)$ to form a new path $Q'$ such that no cell except the endpoints has an uninfected neighbor outside $Q'$. This now accounts for all the uninfected cells from rows $3$ to $m-4$ as well as some of those in rows $2,m-3$.

    $(m-2, 4)$ becomes infected due to its neighbors to the left, right, and below. In turn, this infects $(m-3, 4), (m-3, 5)$, and $(m-3, 6)$ in order, meaning there are no remaining uninfected cells in row $m-3$ outside of $Q'$. The endpoint $(m-3,n)$ of $Q'$ has one uninfected neighbor outside $Q'$ which is $(m-2, n)$. Extending $Q'$ to include this vertex preserves the property that no non-endpoint of $Q'$ has an uninfected neighbor outside $Q'$.
    
    $(m-2, 2)$ becomes infected by virtue of its neighbors to the right, above, and below. $(m-2, 6)$ becomes infected by virtue of its neighbors to the left, above and below, and consequently $(m-2, 7)$ followed by $(m-1, 7)$ become infected. $(m, n)$ becomes infected by virtue of its neighbors to the left, above and below and $(m-1, 3)$ becomes infected by its neighbors to the left, right, and above. Now we may extend $Q'$ from $(m-2, n)$ to $(m-2, 1)$ followed by $(m-1, 1)$, followed by $(m, 1), (m, 2), (m, 3), (m, 4)$ in order such that no cell in the newly redefined $Q'$ besides the endpoints has any uninfected neighbors outside $Q'$.

    However, now the new endpoint $(m, 4)$ has infected neighbors above, below, and to the right, so it becomes infected. The infection cascades throughout the path $Q'$, infecting one cell at a time until reaching the other endpoint $(3, n)$. At this point, no uninfected cells remain outside of rows $1,2,m-2,m-1,m$. We note that $(m-1, 5), (m, 6), (1,7)$, and $(1,9)$ have three infected neighbors and become infected. In turn, this infects $(1,8)$, followed by $(m, 8)$, followed by $(m-1, 8)$, then $(m-1, 9)$, then $(m-2, 9), (m-2, 10), (m-2, 11)$ in order.
    Any cell in row $m$ and a column $1\pmod{3}$ at least $10$ has infected neighbors above, below, and to the left, so becomes infected. Any cell in row $1$ and a column $3\pmod{6}$ at least $9$ has infected neighbors above, below, and to the right, so becomes infected. Any cell in row $m-1$ and a column $0\pmod{6}$ that is at least $12$ has infected neighbors above, below, and to the right, so becomes infected.

    At this point, the remaining uninfected cells are columns $0,2,5\pmod{6}$ in row $1$, starting from column $11$; columns $0,1,2\pmod{6}$ in row $2$ starting from column $12$; columns $3,4,5\pmod{6}$ in row $m-2$ starting from column $15$ (note that such a set is empty when $n=13$); columns $2,3,5\pmod{6}$ in row $m-1$ starting from column $11$; columns $2,5\pmod{6}$ in row $m$ starting from column $11$ and excluding column $n$.
    These form a single path from $(m-1, 11)$ to $(2, n)$. Each endpoint of the path has three infected neighbors so becomes infected, and then the infection cascades through the path until every cell is infected. 

\begin{figure}[H]
    \centering
    \includegraphics[scale=0.6]{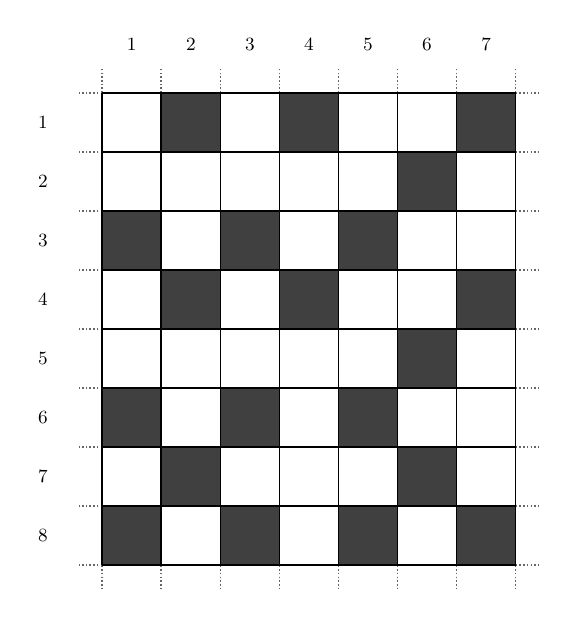}
    \caption{The $n=7$ case of Theorem \ref{thm:torus21}.}\label{figure:torus21b}
\end{figure}

    For $n=7$, use the following infection pattern. For the first $m-2$ rows, infect cells $2,4,7$ in rows $1\pmod{3}$; cell $6$ in rows $2\pmod{3}$; cells $1,3,5$ in rows $0\pmod{3}$. In row $m-1$, infect cells $2,6$, and in row $m$, infect cells $1,3,5,7$. See Figure~\ref{figure:torus21b} for the example of an $8\times 7$ grid. This is a total of 
    \[
    \left(\frac{m-2}{3}\right)(7)+2+4=\frac{7m+4}{3}
    \] infected cells.

    Every initially uninfected cell in row $m$ has three infected neighbors and becomes infected. The same holds for columns $1,3,5$
    of row $m-1$, which in turn infects $(m-1, 4)$ and $(m-1, 7)$. Then columns $1,3,6$ of row $1$ have three infected neighbors and become infected. Consequently, $(1, 5)$ becomes infected. So, no uninfected cells remain in rows $1,m-1,m$. In the remaining rows $1\pmod{3}$, columns $1,3$ have infected neighbors to the left, right, and above, so become infected. In rows $0\pmod{3}$, columns $2,4$ have infected neighbors to the left, right, and below, so become infected.

$(2, 5)$ has infected neighbors above, below, and to the right, so becomes infected. Consequently, $(2,  4), (2, 3), (2, 2)$, and $(2, 1)$ are infected in order. $(m-2, 6)$ becomes infected, followed by $(m-2, 7)$, then by $(m-3, 7)$. The remaining uninfected cells can be partitioned into $(m-5)/3$ paths $P_i$ with endpoints at $(3i-1, 7)$ and $(3i+2, 1)$ for $i=1$ to $(m-5)/3$. Besides the endpoints,  no cell in such a path has an uninfected neighbor outside the path. Furthermore, the only adjacency involving an endpoint is between the endpoint $(3i+2, 1)$ of $P_i$  and the endpoint $(3i+2, 7)$ of $P_{i+1}$. Thus all the remaining uninfected cells form a single path. Each endpoint of the path has three infected neighbors so becomes infected, and then the infection cascades through the path until every cell is infected.

\end{proof}

For two infinite families, we can show that the upper bound in \Cref{thm:torusbounds} is tight.

\tightfive*
\begin{proof}
    For $m=5$ and $n\equiv 1\pmod{3}$, let $n:=3x+1$. If $t_3(5,3x+1)\le\lceil{\frac{5n+1}{3}\rceil}=5x+2$, then no two initially infected cells are adjacent and no initially uninfected cell has four initially infected neighbors. This implies each column has at most two infected cells. If a column starts with no infected cells, those cells can never become infected. Thus, each column has one or two initially infected cells. In particular, it must be that $x$ columns have one infected cell, while the other $2x+1$ columns have two.
    
    This necessitates that there are three consecutive columns with two infected cells each. Without loss of generality, these are the first three columns and the infected cells in the second column are $(1,2)$ and $(3,2)$. Consequently, $(1,1), (1,3), (3,1)$, and $(3,3)$ must be uninfected. At most one of $(1,4)$ and $(1,5)$ can be infected, so to have two infected cells in column $1$, $(1,2)$ must be infected. Applying the same logic to column $3$, we get that $(3,2)$ is also infected. This means $(2,2)$ is an uninfected cell with four infected neighbors, a contradiction. The matching upper bound follows from Theorem~\ref{thm:torusbounds}.

    For $m=7$ and $n\equiv 2\pmod{3}$, let $n:=3x+2$. If $t_3(7,3x+2)\le\lceil{\frac{7n+1}{3}\rceil}=7x+5$, then no two initially infected cells are adjacent and no initially uninfected cell has four initially infected neighbors. This implies each column has at most three infected cells.
    
    First we show that there are not three consecutive columns with $3,3$, and $2$ (or more) infected cells, in that order. If two adjacent columns each have three infected cells, then without loss of generality, these are $(1,2), (3,2), (5,2), (2,3), (4,3)$, and $(6,3)$. The cells $(2,4), (4,4)$, and $(6,4)$ cannot be infected without two adjacent cells being infected, while the cells $(3,4)$ and $(5,4)$ cannot be infected without there being an uninfected cell with four infected neighbors. At most one of $(6,4)$ and $(7,4)$ can be infected, so there is at most one infected cell in column $4$. Similarly, there is at most one infected cell in column $1$.

    Additionally, there are no five consecutive columns with $2,3,2,3$, and $2$ infected cells, in that order. If such columns exist, then without loss of generality, they are the first five columns and the infected cells in the second column are $(1,2), (3,2)$, and $(5,2)$. As each $2\times2$ has at least one infected cell, either $(6,3)$ or $(7,3)$ is infected. Without loss of generality, there are two possibilities for the infected cells in column $3$: $(2,3), (6,3)$ or $(2,3), (7,3)$.

    In the case where $(2,3)$ and $(6,3)$ are infected, we note that along with $(1,1), (3,1), (5,1)$, which must be uninfected to avoid adjacent infected cells, $(2,1)$ must be uninfected to avoid an uninfected cell surrounded by four infected cells. As $(6,1)$ and $(7,1)$ cannot both be infected, $(2,1)$ must be. Column $4$ has three infected cells which must be among $(1,4), (3,4), (4,4), (5,4)$, and $(7,4)$. Since $(1,4)$ and $(7,4)$ cannot both be infected, there are two infected cells among $(3,4), (4,4), (5,4)$, necessitating that $(3,4)$ and $(5,4)$ are both infected. Consequently, $(4,5)$ cannot be infected as this would create an enclosed region of five uninfected cells such that after $(4,2), (3,3), (5,3)$, and $(4,4)$ become immediately infected, $(4,3)$ would be an uninfected cell with four infected neighbors. Additionally $(3,7)$ and $(5,7)$ must be uninfected and the presence of the third infected cell in column $4$, either $(1,4)$ or $(7,4)$ would block two of the remaining cells in column $5$ from being infected, either $(1,5), (2,5)$, or $(6,5), (7,5)$. Thus the only possible infected cells in column $5$ would be an adjacent pair and column $5$ would have at most one infected cell, a contradiction.

    If instead $(2,3)$ and $(7,3)$ are infected, then $(2,4), (7,4)$ must be uninfected to avoid two adjacent infected cells, while $(1,4)$ must be uninfected to avoid an uninfected cell surrounded by four infected cells. Then there is no way for there to be three nonadjacent infected cells in column $4$, a contradiction.
    
    Every column must have at least one infected cell. Suppose there are $a$ columns with $1$. Then there $x+1+a$ with $3$ and $2x+1-2a$ with $2$.

If $a=0$, it is impossible to have two adjacent columns with three infected cells each. If it were, then $2x+1$ columns with $2$ would have to be interwoven between $x+1$ columns with $3$. Thus, some pair of $3$'s would have just a single $2$ in between. The other columns next to these $3$'s would necessarily contain exactly two infected cells, giving five consecutive columns where the number of infected cells is $2,3,2,3,2$, in that order, which is forbidden.

If $a=1$, it is still impossible to have two adjacent columns with three infected cells each. Also note that since each $2\times2$ square has at least one infected cell, that each pair of adjacent columns has at least four infected cells. Thus the two columns on either side of the column with one infected cell each have $3$. Then there are $2x-1$ columns with two infected cells that need to fit into the $x+1$ regions between pairs of consecutive $3$'s with no $1$'s in between. Each such pair of consecutive $3$'s has at least one $2$ in between as otherwise considering the columns on either side would give four consecutive columns where the number of infected cells is $1,3,3,2$, or $2,3,3,2$, in that order which is forbidden. Thus, the middle $x-1$ of these $x+1$ regions have at most $2x-3$ $2$'s, so again one pair of $3$'s has just a single $2$ in between. As neither of these $3$'s is adjacent to the single column with one infected cell, the other columns next to these $3$'s would necessarily contain exactly two infected cells, giving five consecutive columns where the number of infected cells is $2,3,2,3,2$, in that order, which is forbidden.

Otherwise, there are $a\ge 2$ columns with exactly one infected cell and between each consecutive pair of $1$'s, we have some number of $3$'s and $2$'s. The only way to have adjacent $3$'s is if the region between consecutive $1$'s consists of just two $3$'s. Now suppose there are $y\ge 3$ $3$'s in a region, creating $y-1$ slots to place $2$'s in between (no $2$ can go directly next to a $1$). The two outermost slots must have at least one $2$ (to avoid the patterns $3,3,2$ and $3,3,3$) and the $y-3$ inner slots must have at least two $2$'s (to also avoid the pattern $2,3,2,3,2$) for a total of at least $2y-4$ columns with exactly two infected cells. Note that this vacuously applies even if $y<3$. Thus the total of columns with exactly two infected cells is at least twice the total number of columns with exactly three infected cells minus $4$ times the number of regions between pairs of consecutive $1$'s:
\begin{align*}
2x+1-2a&\ge 2(x+1+a)-4a\\
2x+1-2a&\ge 2x+2-2a,
\end{align*}
yielding the desired contradiction. The matching upper bound follows from Theorem~\ref{thm:torusbounds}.
\end{proof}

\section{Open problems}
We completely resolved the $3$-neighbor bootstrap percolation problem on rectangular grids $P_m\square P_n$. However, it would be interesting to consider what happens when cells can be infected by their diagonal neighbors, that is when the graph is the strong product $P_m\boxtimes P_n$. 
\begin{question}
    What is the minimum number of infected cells needed to fully infect $P_m\boxtimes P_n$ under the $3$-neighbor bootstrap percolation process?
\end{question}
We quickly note that the answer is substantially lower than for $P_m\square P_n$, where roughly $1/3$ of the cells need to be infected. In fact, even for the $4$-neighbor bootstrap percolation process, it suffices to initially infect the boundary along with any cells with both coordinates odd, meaning roughly $1/4$ of the cells are infected for $m,n$ sufficiently large. Any remaining cell with both coordinates even instantly becomes infected by virtue of its four diagonal neighbors. Then any cell still uninfected becomes infected by its vertically and horizontally adjacent neighbors.

It is also natural to consider rectangular grids in higher dimensions. In this setting, we let $s_3(x_1,x_2,\dots,x_d)$ denote the minimum number of initially infected cells needed to infect every cell of $P_{x_1}\square P_{x_2}\square\cdots\square P_{x_d}$. Dukes, Noel, and Romer~\cite{DNR23} considered $3$-dimensional grids and showed that \[s_3(x_1,x_2,x_3)=\lceil{\frac{x_1x_2+x_1x_3+x_2x_3}{3}\rceil}\] whenever $min\{x_1,x_2,x_3\}\ge 11$. This result was extended to $min\{x_1,x_2,x_3\}\ge 7$ by Dolphin and Dukes~\cite{DD25}. Recalling that having one dimension equal to $4$ or $6$ proved to be a nontrivial exceptional case in the two-dimensional setting (see Theorems~\ref{width4} and~\ref{width6}), we ask the following question. We avoid the case where any side has just a single cell, as this reduces to the lower dimensional case. 
\begin{question}
    Determine $s_3(x_1,x_2,x_3)$ for $2\le \min\{x_1,x_2,x_3\}\le 6$.
\end{question}

For the $3$-neighbor bootstrap percolation process on the toroidal grid $C_m\square C_n$, a gap of $1$ remains between the upper and lower bounds for most pairs $m,n$ with $\{m,n\}=\{1,2\}\pmod{3}, m\equiv n\equiv 2\pmod{6}$, and $m\equiv n\equiv 4\pmod{6}$. With some effort, one can show that the general lower bound of $\lceil{\frac{mn+1}{3}\rceil}$ is not tight for an $8\times 10$ torus. We ask if there are other such cases besides those stated in Theorems~\ref{thm:smalltorus} and~\ref{thm:5tight}.

\begin{question}
    For which $m,n\ge 8$ is $t_3(m,n)>\lceil{\frac{mn+1}{3}\rceil}$?
\end{question}


\end{document}